\theoremstyle{definition}
\newtheorem{definition}{Definition}
\theoremstyle{theorem}
\newtheorem{proposition}[definition]{Proposition}
\newtheorem{lemma}[definition]{Lemma}
\newtheorem{theorem}[definition]{Theorem}
\newtheorem{corollary}[definition]{Corollary}
\numberwithin{equation}{section}
\numberwithin{definition}{section}
\theoremstyle{remark}
\newtheorem{remark}[definition]{Remark}
\newtheorem{question}[definition]{Question}
\newtheorem{example}[definition]{Example}
\def\PP{\mathbb{P}}
\def\AA{\mathcal{A}}
\def\HH{\mathcal{H}}
\def\lll{\mathsf{x}}
\def\ii{\mathsf{i}}
\def\L2{\mathrm{L}^2}
\def\QQ{\mathbb{Q}}
\def\RR{\mathbb{R}}
\def\GG{\mathcal{G}}
\def\FF{\mathcal{F}}
\def\dd{\mathrm{d}}
\def\yyy{\mathsf{y}}
\begin{document}
\title{Some characterizations for Markov processes at first passage}

\author{Matija Vidmar}
\address{Department of Mathematics, Faculty of Mathematics and Physics, University of Ljubljana}
\email{matija.vidmar@fmf.uni-lj.si}

\begin{abstract}
Suppose $X$ is a Markov process on the real line (or some interval). Do the distributions of its first passage times  downwards (fptd) determine its law?  In this paper we treat some special cases of this question. We prove that if the fptd  process has the law of a subordinator, then necessarily $X$ is a L\'evy process with no negative jumps; specifying  the law of the subordinator determines the law of $X$ uniquely. We further show that, likewise, the classes of continuous-state branching processes and of self-similar processes without negative jumps are also respectively characterised by a certain structure of their fptd distributions; and each member of these classes separately is determined uniquely by the precise family of its fptd laws. The road to these results is paved by (i) the identification of Markov processes without negative jumps in terms of the nature of their fptd laws, and (ii) some general results concerning the identification of the fptd distributions for such processes.
\end{abstract}

\keywords{Markov process; one-sided jumps; first passage; Laplace transform; scale function; characterization; continuous-state branching process; spectrally positive L\'evy process; self-similar process of the spectrally positive type; time-change}

\thanks{Support from the Slovenian Research Agency  (project No. N1-0174) is acknowledged. The author thanks Zoran Vondraček, Mihael Perman and Jon Warren for some kind discussions on the topics of this paper. Parts of this text were completed while the author was a Fernandes Fellow with the University of Warwick, he is grateful for its hospitality.}  

\subjclass[2020]{60J25} 
\date{\today}

\maketitle

\section{Introduction}
First-passage times of stochastic processes  \cite{Abrahams1986,redner} are of interest for the following reasons, among others. On the theoretical side they represent  non-trivial path functionals of the process, yielding insight into how it explores its state space. On the applied side their study is of merit because, first, many phenomena in nature (including society) are subject to the laws of chance, or can at least be successfully modeled as  being such, and because, second, relevant developments pertaining to these phenomena are often triggered by threshold events, when a certain dynamic quantity first attains a given value or a value in some given set. Examples of this  are legion: options on underlying assets coming in-the-money or out-of-the-money, water levels exceeding crticial flood levels, emergence of queues or of traffic jams, electrical circuit overloads etc.

A basic, but nevertheless often non-trivial task in this area is to identify the laws of the first passage times of a real-valued process below a given level (or above, but by taking the negative of the process we reduce to the former case). Restricting our discussion to Markov processes, in case the sample paths are continuous, i.e. for diffusions, the Laplace transforms, and even the probability density functions  of such first passage times can be expressed to some level of explicitness \cite{nobile}.  For processes with jumps, however, this is in general a very difficult problem, even for those classes of Markov processes for which there is an otherwise well-developed theory. In the present paper we shall  provide some fairly general results concerning this first passage theory for the subclass of Markov processes that have no negative jumps,  a property that simplifies the analysis considerably (and does so more generally for the entire corpus of related ``exit problems'', see e.g.  \cite{kkr,vidmar2021exit,landriault_li_zhang_2017,avram_li_li_2021}).

But our main focus will be in flipping the perspective around and asking, given the nature of the laws of the first passage times, as above, to what extent can we infer the statistics of the underlying Markov process? We shall see that the a.s. absence of negative jumps is indeed embedded already in the structure of these laws, more precisely, of their Laplace transforms.  Moreover, leaning on the general results indicated in the preceding paragraph we shall show that, somewhat surprisingly,  the classes of the processes mentioned in the abstract can be identified from the form of the Laplace transforms of the first passage times, each member of these classes being further uniquely determined by the precise family of laws of its first passage times downwards. 

\subsection{Setting}\label{subsection:setting}
Before we go on to describe what we would like to achieve in more detail  let us fence off precisely the confines within which we shall conduct ourselves.

Throughout $X=(X_t)_{t\in [0,\zeta)}$, under the complete probabilities $(\PP_x)_{x\in I}$,  in the right-continuous filtration $\FF=(\FF_t)_{t\in [0,\infty)}$, is an adapted (implicit: $\zeta$ is an $\FF$-stopping time),  quasi left-continuous, normal and strong Markov process with lifetime $\zeta$, cemetery $\infty$ (accordingly we set $X_t:=\infty$ for $t\in [\zeta,\infty)$), state space an interval $I$ of $\mathbb{R}$ that is unbounded above, c\`adl\`ag in $I$ on $[0,\zeta)$ (by convention $X_{0-}:=X_0$ on $\{\zeta>0\}$), $\lim_{\zeta-}X$ existing in $I\cup\{\infty\}$ a.s. on $\{0<\zeta<\infty\}$ (for definiteness put $\lim_{\zeta-}X:=\infty$ on $\{0<\zeta<\infty\}$ when the limit does not exist), $\zeta>0$ a.s.. We ask that $(I\ni x\mapsto \PP_x(t<\zeta,X_t\in A))$ is at least universally if not Borel measurable for all $A\in \mathcal{B}_I$ ($:=$ the Borel sets of $I$). For any a.s. nondecreasing sequence $(T_n)_{n\in \mathbb{N}}$ of $\FF$-stopping times the event $\{T_n<\zeta\text{ for all }n\in \mathbb{N}\}\cap \{\lim_{n\to\infty}T_n =\zeta\}\cap \{\zeta<\infty,X_{\zeta-}<\infty\}$ is assumed to be negligible: $\zeta$ cannot be announced (is totally inaccessible) on the event that the left limit of $X$ at $\zeta$ is finite.  Finally, to ensure absolute peace of mind --- to be precise, to secure that the first passage times $T_\lll^-$, $\lll\in I$, considered below will be stopping times --- the filtration $\FF$ should enjoy the following property of completeness relative to the family of measures $(\PP_x)_{x\in I}$:  $\FF=\overline{\FF}$, where for $t\in [0,\infty)$ the completed $\sigma$-field $\overline{\FF}_t$ consists of those subsets of the sample space which differ from an event from $\FF_t$ by a negligible set (it is  a relatively weak form of completeness, but it is all we need).
\vspace{0.1cm}
\footnotesize
\begin{adjustwidth}{1cm}{1cm}
Just to be sure, let us fill in some outstanding details. The qualifier a.s. (resp. negligible) meant (and will continue to mean) a.s.-$\PP_x$ (resp. $\PP_x$-negligible) for all $x\in I$, likewise independent will mean independent under each $\PP_x$, $x\in I$, etc. The domains of the probabilities $\PP_x$, $x\in I$, need not (and typically will not) be the same, however they all contain $\FF_\infty$.   By strong Markov we intended that for any $\FF$-stopping time $S$,\footnote{Expectations we write as follows: $\QQ[W]$ for $\mathbb{E}_\QQ[W]$, $\QQ[W;A]$ for $\mathbb{E}_\QQ[W\mathbbm{1}_A]$ and $\QQ[W\vert \mathcal{H}]$ for $\mathbb{E}_\QQ[W\vert \mathcal{H}]$.} $\PP_x[H(X_{S+\cdot})\vert \FF_S]=\PP_{X_S}[H(X)]$ a.s.-$\PP_x$ on $\{S<\zeta\}$ for all $x\in I$ and for all measurable $H\geq 0$; by quasi left-continuity, that for any sequence $(T_n)_{n\in \mathbb{N}}$ of $\FF$-stopping times that is nondecreasing to a $T$ a.s., one has $\lim_{n\to\infty}X_{T_n}=X_T$ a.s. on $\{T<\zeta\}$; finally, by normality, that $X_0=x$ a.s.-$\PP_x$ on $\{\zeta>0\}$ for all $x\in I$. 
\end{adjustwidth}
\vspace{0.1cm}
 \normalsize 
 
 For parts of this paper  some of the properties that we have insisted on above could perhaps be relaxed, but we feel that they are fairly innocuous anyway, so we are happy to leave them as blanket assumptions. Importantly, all the classes of processes mentioned in the abstract will be seen to fall naturally under our general provisons. As yet we have made and make no assumption on the nature of the jumps of $X$. Some further comments on the setting follow. The reader eager to get to the heart of the matter should, and can safely proceed at once to Subsection~\ref{subsection:first-observations-mandate}. 

 \begin{remark}\label{remark:right-cts-filtration}
 Concerning the assumptions made that pertain to the filtration $\FF$ we may mention that if, ceteris paribus, the process $X$ satisfies them instead relative to a right-continuous (but not necessarily complete) filtration $\FF$, then it satisfies them also relative to $\overline{\FF}$ (which is complete and right-continuous), a fortiori relative to the completion $\overline{\FF^X_+}$ of the right-continuous modification $\FF^X_+$ of its natural filtration $\FF^X$. This may be gleaned from the two facts that, for each $x\in I$, (I) $\overline{\FF}$ is included in the usual $\PP_x$-completion $\overline{\FF}^{x}$ of $\FF$ (so in the latter all the $\PP_x$-negligible sets are thrown into each member of the filtration) and (II) for any $\overline{\FF}^x$-stopping time $\overline{S}^x$ there is an $\FF$-stopping time $S$ such that $\overline{S}^x=S$ a.s.-$\PP_x$ and $\overline{\FF}^x_{\overline{S}^x}=\overline{\FF}^x_S=\FF_{S}\lor \PP_x^{-1}(\{0,1\})$ \cite[Theorem~120-IV.59]{dellacherie}. A canonical realization of a Feller process (semigroup) in $I$, which a.s. has a limit in $I\cup\{\infty\}$ at $\zeta-$ on $\{0<\zeta<\infty\}$ (the latter being automatic if $\inf I\in I$), certainly satisfies all our standing assumptions in its right-continuous and completed natural filtration \cite[Sections~III.2 \&~III.3]{revuz-yor}.
\end{remark}

 The requirement on the unboundedness of $I$ above is not an essential issue. For, if, ceteris paribus, $\infty>\sup I\notin I$ and $X$ satisfies the above with $\sup I$ replacing the cemetery state $\infty$ in the considerations of $\lim_{\zeta-}X$ in the obvious way, then we reduce to the standing case through an injective continuous increasing transformation of space, which maps $I$ onto $I\cup (\inf I,\infty)$. On the other hand,  
 if, ceteris paribus, $I\ni \sup I<\infty$, and $X$ satisfies the above, then a reduction is achieved through an enlargement of the state space from to  $I\cup [\sup I,\infty)$, with the new process e.g. having a deterministic strictly negative drift at levels from $[\sup I,\infty)$. Further, if, again ceteris paribus, in lieu of `` $\lim_{\zeta-}X$ existing in $I\cup\{\infty\}$ a.s. on $\{0<\zeta<\infty\}$'' one has just ``$\lim_{\zeta-}X\in I\cup\{\infty,\inf I\}$ a.s. on $\{0<\zeta<\infty\}$, $-\infty<\inf I\notin I$'', then one may attempt to prolongate the process $X$ after its lifetime with the value $\inf I$ on $\{0<\zeta<\infty,\lim_{\zeta-}X=\inf I\}$, adjoining $\inf I$ to the state space as an absorbing state (and if also $\inf I=-\infty$, then first make it finite by a transformation of space, mapping $\mathbb{R}$ to $(l,\infty)$ for some [arbitrary] $l\in\mathbb{R}$).  \label{unbdd-above}
 
 Another possible reduction to keep in mind in applications, when $X$ has no negative jumps, is stopping the process $X$ on exit from $[\lll,\infty)$, restricting the state space to $[\lll,\infty)$, $\lll\in I$ (and then letting $\lll\downarrow \inf I$). 


\subsection{First observations, related literature and mandate}\label{subsection:first-observations-mandate} With the above specification of the setting out of the way let, for $\lll\in I$,  $$T_\lll^-:=\inf \{t\in [0,\zeta):X_t\leq \lll\}\quad (\inf\emptyset:=\infty)$$ be the first passage time of $X$ below the level $\lll$. The $T^-_\lll$, $\lll\in I$, are indeed $\FF$-stopping times. We argue it briefly, to see where and how the right-continuity and completeness of $\FF$ come into play, introducing some notation along the way.
\begin{proof} 
By right-continuity of $\FF$, for all $\lll\in I^\circ:=I\backslash \{\inf I\}$, $T^{\,\text{-}\,\text{-}}_\lll:=\inf\{t\in[0,\zeta):X_t<\lll\}$ is an $\FF$-stopping time, since $\{T^{\,\text{-}\,\text{-}}_\lll<t\}=\cup_{s\in \mathbb{Q}\cap [0,t)}\{s<\zeta,X_s<\lll\}\in \FF_t$ for all $t\in (0,\infty)$. Then fix $\lll\in I$. By quasi left-continuity of $X$ a.s. $T_\lll^-=\tilde T_\lll^-:=\inf \{t\in [0,\zeta):X_t\land X_{t-}\leq \lll\}$, indeed $T_\lll^-\leq \lim_{a\downarrow \lll}T_a^{\,\text{-}\,\text{-}}\leq \tilde T_\lll^-<\zeta$ a.s. on $\{\tilde T_\lll^-<T_\lll^-\}$. But, for $t\in[0,\infty)$, $\{\tilde T_\lll^-\leq t\}= \cup_{s\in (\mathbb{Q}\cap [0,t))\cup \{t\}}\{s<\zeta,\underline{X}_s\leq \lll\}\in \FF_t$, $\underline{X}=(\underline{X}_t)_{t\in [0,\zeta)}$ being the running infimum process of $X$; therefore $\tilde T_\lll^-$  is an $\FF$-stopping time. By completeness of $\FF$ so is $T_\lll^-$.\end{proof} \noindent It stands to reason that the times $T_\lll^-$, $\lll\in I$, are the most natural ones from an applied and intuitive point of view, not their closely allied (and by the hypotheses a.s. equal) counterparts $\tilde T_\lll^-$, $\lll\in I$ (which would have been stopping times without any assumptions on $\FF$), nor indeed the $T_\lll^{\,\text{-}\,\text{-}}$, $\lll\in I^\circ$ (for which only right-continuity of $\FF$ would have been needed). Quasi left-continuity of $X$ and the fact that $\zeta$ cannot be announced on $\{0<\zeta<\infty,X_{\zeta-}<\infty\}$ render
\begin{equation}\label{eq:right-ct}
\text{$\uparrow$-$\lim$}_{b\downarrow a}T_b^-=T_a^-\text{ a.s. for all }a\in I,
\end{equation}
i.e. the  a.s. right-continuity of $T_\lll^-$ in $\lll\in I$ at fixed levels, which will be needed later on.

\label{page:additive-regen}Suppose now (for the time being, for this paragraph) that $X$ a.s. does not have negative jumps. By the strong Markov property of $X$ and the preceding assumption we deduce that the space-indexed process $T^-=(T_\lll^-)_{\lll\in I}$ 
\begin{itemize}
\item is additive, in the sense that for $\lll\leq x$ from $I$, on $\{T_x^-<\infty\}$, the increment $T_\lll^--T_x^-$ is independent of the ``past'' $(T_a^-)_{a\in [x,\infty)}$; 
\item has the regenerative property,  in that for all $a\in [x,\infty)$ for which $\PP_a(T_x^-<\infty)>0$, conditionally on $\{T_x^-<\infty\}$, the law of $T_\lll^--T_x^-$ under $\PP_a$ is $(T_\lll^-)_\star {\PP_x}$.\footnote{$W_\star\mathbb{Q}$ is the push-forward (the law) of a random element $W$ under a probability  $\mathbb{Q}$.}
\end{itemize}
In particular the law of the process $T^-$ is determined already by its one-dimensional marginals. Furthermore, when
\begin{equation}\label{eq:positive-chances}
\text{$\PP_x(T_\lll^-<\zeta)>0$ for all $\lll\leq x$ from $I$},
\end{equation}
we may infer from this, for each $q\in [0,\infty)$, the existence, unique up to a multiplicative constant,  of a so-called scale function $\Phi_q:I\to (0,\infty)$ such that 
\begin{equation}\tag{$q$}\label{eq:scale-identity}
\PP_x[e^{-q T_\lll^-};T_\lll^-<\zeta]=\frac{\Phi_q(x)}{\Phi_q(\lll)}\text{ for all $\lll\leq x$ from $I$.}
\end{equation}
\begin{proof}
We argue existence, uniqueness in the sense indicated is clear.  For  $\lll\leq x\leq a$ from $I$, 
\begin{align*}
\PP_a[e^{-q T_\lll^-};T_\lll^-<\zeta]&=\PP_a[e^{-q( T_x^-+(T_\lll^--T_x^-))};T_\lll^-<\infty]\\
&=\PP_a[e^{-q T_x^-};T_x^-<\infty]\PP_a[e^{-q(T_\lll^--T_x^-)}\mathbbm{1}_{\{T_\lll^-<\infty\}}\vert T_x^-<\infty]\quad \text{(by additivity of $T^-$)}\\
&=\PP_a[e^{-q T_x^-};T_x^-<\infty]\PP_x[e^{-qT_\lll^-}; T_\lll^-<\infty]\text{ (by the regenerative property of $T^-$)}.
\end{align*}
Thus 
$$\PP_x[e^{-qT_\lll^-}; T_\lll^-<\zeta]=\frac{\PP_a[e^{-q T_\lll^-};T_\lll^-<\zeta]}{\PP_a[e^{-q T_x^-};T_x^-<\zeta]}\text{ (on using \eqref{eq:positive-chances})}.$$ Choosing finally a sequence $(a_k)_{k\in \mathbb{N}}$ in $I$ that is increasing to $\infty$ we define unambiguously $\Phi_q(z):=\frac{(\PP_{a_k}[e^{-q T_z^-};T_z^-<\zeta])^{-1}}{(\PP_{a_k}[e^{-q T_{a_1}^-};T_{a_1}^-<\zeta])^{-1}}$ for $z\in I\cap (-\infty,a_k]$, $k\in \mathbb{N}$, and get \eqref{eq:scale-identity}.  
\end{proof}

It has emerged in the literature \cite{kyprianou,ma,vidmar2021continuousstate,pierre} that for many classes of processes (satisfying our standing assumptions, of course) the scale functions $\Phi_q$, $q\in [0,\infty)$, may be identified in explicit form in terms of the characteristics of the process $X$. (An entirely analogous, technically less involved, phenomenon can be wittnessed in discrete space, see e.g. \cite{vidmar-branch}, but we do not deal with the latter here.)

Since the structure \eqref{eq:scale-identity} is somehow responsible for a considerable simplification of the first passage problem we find it worthwhile first to  characterize the situation in which \eqref{eq:scale-identity} can occur at all.  

Once this has been achieved our interest will lie  in the (further) investigation of the interplay between the law of $X$, the law of $T^-$, and the scale functions $\Phi_q$, $q\in [0,\infty)$, of \eqref{eq:scale-identity}. Specifically, on the one hand, we will want to characterize the maps $\Phi_q$, $q\in[0,\infty)$, as precisely as we can given the relative generality of the setting. Conversely, holding $I$ fixed, our goal will be to show that the (resp. classes of) laws of the processes mentioned in the abstract are already uniquely determined by (a) (resp. certain forms of)  the functions $\Phi_q$ satisfying \eqref{eq:scale-identity} for $q\in [0,\infty)$ in a neighborhood of infinity and  (b) the ``boundary behaviour'' stipulation that $\inf I$ is absorbing if it belongs to $I$; in other words, no other Markov processes, subject to the assumptions of Subsection~\ref{subsection:setting} and having $\inf I$ absorbing when $\inf I\in I$, can mimick the laws of their first passage times (resp. in the form of their Laplace transforms).

The former problem is in some sense basic enough. The process $T^-$ is a generic (not depending on the law of $X$)  map of $X$, that is measurable relative to the completion \cite[Eq.~(I.5.2)]{ge2011markov}  w.r.t. the family of laws $X_\star \PP_x$, $x\in I$ (whatever these laws, subject to the provisions of Subsection~\ref{subsection:setting}); therefore, at least in principle, it must be possible to describe/characterize its law, i.e. the $\Phi_q$, $q\in [0,\infty)$, in relatively explicit terms. 

The resolution to the second problem appears rather a bit more involved. Indeed, in order for it to be trivial we would presumably need, at the very least, for all fixed $x\in I$ and $t\in [0,\infty)$, $X_t$ to be a.s.-$\PP_x$ equal to a generic (depending on $t$ and maybe even on $x$, but not on the law of $X$) map of $T^-$, that would be $(T^-)_\star\PP_x$-measurable (again, whatever this law, subject to the provisions of Subsection~\ref{subsection:setting} and $\inf I$ being absorbing when it belongs to $I$). This, however is not the case, which seems plain enough, but it would perhaps also seem obvious that Brownian motion could not possibly be a.s. recovered  measurably from its drawdown process, yet it can. So let us be explicit about it for $I=\mathbb{R}$. If, per absurdum, it were possible, then a Brownian motion with strictly positive drift would a.s.-$\PP_0$ be equal to a measurable function of its running infimum. However, the post-(time of overall infimum) increments of such drifted Brownian motion are independent of the running infimum and non-trivial under $\PP_0$, so it cannot be. 

Thus, it is not at all obvious why the scale functions $\Phi_q$, $q\in [0,\infty)$, together with the ``boundary behaviour'' stipulation, should determine already the law of $X$. The only result in this direction that the author is aware of is a recent finding of \cite{mateusz}, generalizing \cite{doney-chaumont}, namely that \emph{within the L\'evy class} a process is determined by the distribution of its running minimum (whereas we shall want to determine (classes of) processes in terms of the distributions of their first passage times downwards \emph{subject only to the assumptions of Subsection~\ref{subsection:setting}}). More broadly, and somewhat related, characterizations of the class  of first passage time distributions for classes of Markov processes have been a subject of interest, see e.g. \cite{bondesson} for random walks and \cite{makoto} for birth and death processes. On the other hand, focusing instead on the distributions of the \emph{position} of a Markov process on hitting a set (hitting distributions), it was shown in the seminal paper \cite{b-g-m} that, loosely speaking, two Hunt processes having identical hitting distributions are equal in law up to a time-change; a little later it was  established \cite{arbib} that a sample-path continuous process having, in a sense that we do not make precise here,  the hitting probabilities and mean exit times of a given diffusion,  is Markovian and has the transition probabilities of the diffusion (so here one does not even need to assume Markovianity of the process).

\subsection{Article structure and roadmap to results}\label{subsection:structure}
In Section~\ref{sec:scale-at-the-min} we characterize processes for which there ``exists a scale function $\Phi_q$ of  \eqref{eq:scale-identity}, $q\in (0,\infty)$, at the minimum'' (Theorem~\ref{theorem:skip-free}). It turns out, without surprise, that actually it is necessary for $X$ to a.s. not have negative jumps. 

 Section~\ref{section:grround-work} describes the $\Phi_q$   of \eqref{eq:scale-identity}, $q\in [0,\infty)$, in terms of a martingale problem (Proposition~\ref{proposition:characterization-one}) and via the associated generator-eigenvalue equation  (Proposition~\ref{proposition:characterization-generator}); said generator is identified explicitly, and some special properties of the scale functions are noted in the ``time-changed L\'evy world'' (Proposition~\ref{proposition:fund-class}). We do not claim any novelty here (all mentioned propositions are folklore, for sure), only relative generality, which may serve as a useful reference.

Exploiting the preceding, Theorem~\ref{theorem:skip-free} and  Proposition~\ref{proposition:characterization-one} especially, characterizations of the law of $X$ through the laws of its first passage times then follow in our main series of results in Section~\ref{section:characterizations}. Specifically, for possibly killed spectrally positive L\'evy processes (pk-spLp), see Theorem~\ref{theorem:levy-character}; for  self-similar Markov processes on the real line of the spectrally positive type, Theorem~\ref{theorem:self-similar}; for continuous-state branching processes,  Theorems~\ref{theorem:csbp} and~\ref{theorem:csbp-2}. These results are at least a little bit unexpected. For intuitively it seems that the times of first passage downwards at the very least ``could not see'' the positive jumps, and yet, in the sense of characterizing the law, they do. 


\subsection{Convention}
When referring to a (possibly killed) spectrally positive L\'evy process we exclude (possibly killed) subordinators, in particular the constant process, but \emph{not} negative drifts. This is relatively very natural given that our analysis is  of the first passage times downwards, these being trivial for (a.s.) nondecreasing processes.

\section{Characterization of processes that admit a scale function at the minimum}\label{sec:scale-at-the-min}
The reader would have noticed that in deriving \eqref{eq:scale-identity}, besides \eqref{eq:positive-chances}, only the following property, not the a.s. absence of the negative jumps, of $X$ was actually used (in applying the strong Markov property to get the additive and regenerative properties of $T^-$). 
\begin{definition}
$X$ is said to be continuous at the minimum when  $X_{T_a^-}=a$ a.s.-$\PP_x$ on $\{T_a^-<\zeta\}$ for all $a\leq x$ (equivalently, all $a\in (\inf I, x]$, all $x$) from $I$.
\end{definition}
In fact the two properties are equivalent, and (much) more can be said.

\begin{theorem}\label{theorem:skip-free}
The following statements are equiveridical. 
\begin{enumerate}[(i)]
\item\label{scales-equiv:i} $X$ a.s. does not have negative jumps and \eqref{eq:positive-chances} holds true.
\item\label{scales-equiv:i'} $\underline{X}$ is a.s. continuous and \eqref{eq:positive-chances} holds true.
\item\label{scales-equiv:iii}  $X$ is continuous at the minimum and \eqref{eq:positive-chances} holds true.
\item\label{scales-equiv:iii'} $T^-$ is additive and regenerative  and \eqref{eq:positive-chances} holds true.
\item\label{scales-equiv:ii'} For each $q\in [0,\infty)$ there exists a map $\Phi_q:I\to (0,\infty)$ such that \eqref{eq:scale-identity} holds true. 
\item\label{scales-equiv:ii} For some $q\in (0,\infty)$ there exists a map $\Phi_q:I\to (0,\infty)$ such that \eqref{eq:scale-identity} holds true. 
\end{enumerate}
The equivalence of \ref{scales-equiv:i}, \ref{scales-equiv:i'} and \ref{scales-equiv:iii} holds true even if \eqref{eq:positive-chances} is omitted from them. Furthermore, when the equivalent conditions above hold true, then, for each $q\in [0,\infty)$, the map $\Phi_q$ is continuous and nonincreasing; if $q>0$, then even strictly decreasing.
\end{theorem}
Besides its theoretical appeal, this result will lend itself nicely in proving the characterizations of Section~\ref{section:characterizations}, since it allows to infer an important structural property of the process (namely, the a.s. absence of negative jumps) directly from the form of the Laplace transforms of the first passage times. Before turning to the proof it is perhaps worth remarking that additivity (in particular, Markovianity) of $T^-$ together with \eqref{eq:positive-chances} is not enough to infer the absence of negative jumps:
\begin{example}
Let $X$ be the negative of a compound Poisson subordinator with exponentially distributed  jump sizes. By the memoryless property of the exponential distribution it follows that the process $T^-$ is additive. By Theorem~\ref{theorem:skip-free} for no $q\in (0,\infty)$ can we have \eqref{eq:scale-identity}. As a check we can compute (e.g. by conditioning on the first jump and solving the resulting integral equation), $\lambda\in (0,\infty)$ being the holding rate and $\mu\in (0,\infty)$ the rate of the jump sizes of $X$,
$$\PP_x[e^{-q T_\lll^-};T_\lll^-<\zeta]=
\begin{cases}
1, &\lll=x\\
\frac{\lambda}{q+\lambda}e^{-\frac{\mu q}{q+\lambda}(x-\lll)},&\lll<x
\end{cases},\quad \{\lll, x\}\subset\mathbb{R},$$
and it is clear that this cannot be written in the form \eqref{eq:scale-identity}.
\end{example}

\begin{proof}
We have already noted in Subsection~\ref{subsection:first-observations-mandate} and at the start of this section that \ref{scales-equiv:iii}   implies \ref{scales-equiv:iii'}, which in turn was seen to entail \ref{scales-equiv:ii'}, and trivially 
\ref{scales-equiv:ii'} implies \ref{scales-equiv:ii}.

Assume \ref{scales-equiv:ii}. Let $a\in (\inf I,x]$ and pick any $\lll\in I$ such that $\lll< a$. Then, on the one hand, directly from  \eqref{eq:scale-identity},
$$\PP_x[e^{-q T_\lll^-};T_\lll^-<\zeta]=\frac{\Phi_q(x)}{\Phi_q(\lll)}=\frac{\Phi_q(x)}{\Phi_q(a)}\frac{\Phi_q(a)}{\Phi_q(\lll)}=\PP_x[e^{-q T_a^-}\PP_a[e^{-qT_\lll^-};T_\lll^-<\zeta];T_a^-<\zeta]$$ and on the other hand, by the strong Markov property,
$$\PP_x[e^{-q T_\lll^-};T_\lll^-<\zeta]=\PP_x\left[e^{-q T_a^-}\PP_{X_{T_a^-}}[e^{-qT_\lll^-};T_\lll^-<\zeta];T_a^-<\zeta\right].$$
The fact that $\Phi_q$ is nonincreasing (which is immediate from \eqref{eq:scale-identity}, since the Laplace transform is bounded by $1$) together with \eqref{eq:scale-identity} and $X_{T_a^-}\leq a$ on $\{T_a^-<\zeta\}$ ensures that 
$$\PP_{X_{T_a^-}}[e^{-qT_\lll^-};T_\lll^-<\zeta]\geq \PP_a[e^{-qT_\lll^-};T_\lll^-<\zeta]\text{ on }\{T_a^-<\zeta\}.$$ We conclude that actually 
$$\PP_{X_{T_a^-}}[e^{-qT_\lll^-};T_\lll^-<\zeta]= \PP_a[e^{-qT_\lll^-};T_\lll^-<\zeta]\text{ a.s.-}\PP_x\text{ on }\{T_a^-<\zeta\}.$$ Because $\Phi_q$ is even strictly decreasing (the contrary would imply by \eqref{eq:scale-identity} that for some $\lll<x$ from $I$ we would have $T_\lll^-=0$ a.s.-$\PP_x$, which is absurd by the right-continuity of $X$) and $\lll<a$ it follows via \eqref{eq:scale-identity} again that $X_{T_a^-}=a$ a.s.-$\PP_x$ on $\{T_a^-<\zeta\}$, so we get \ref{scales-equiv:iii} (it is elementary to note from \eqref{eq:scale-identity} that \eqref{eq:positive-chances} holds true).

Clearly the a.s. absence of negative jumps implies that $\underline{X}$ is continuous, which in turn implies continuity at the minimum. Suppose conversely that $X$ is continuous at the minimum and, per absurdum, that there is an $x\in I$ such that with positive $\PP_x$-probability $X$ has negative jumps. By continuity of probability, for some $\epsilon\in (0,\infty)$, $X$ has a negative jump of size $>\epsilon$ with positive $\PP_x$-probability. For $t\in [0,\zeta)$, denote by $S_t$  the time of the first negative jump of $X$ of size $>\epsilon$ on the temporal interval $(t,\zeta)$ (which may be infinite, though $\PP_x(0<\zeta,S_0<\zeta)>0$).
For $n\in \mathbb{N}_0$ the event 
$$\cup_{k\in \mathbb{N}_0}\left\{\frac{k}{2^n}<\zeta,S_{\frac{k}{2^n}}<\zeta,\sup_{t\in [0,S_{k/2^n})} \vert X_{\frac{k}{2^n}+t}-X_{\frac{k}{2^n}}\vert \leq \frac{\epsilon}{2}\right\}$$
is $\uparrow \{0<\zeta,S_0<\zeta\}$ as $n\to \infty$ (because $X$ is l\`ag on $(0,\zeta)$). Hence,  by continuity of probability again, and then by superadditivity, for some $n\in \mathbb{N}_0$ and some $k\in \mathbb{N}_0$,  the event 
$$\left\{\frac{k}{2^n}<\zeta,S_{\frac{k}{2^n}}<\zeta,\sup_{t\in [0,S_{k/2^n})} \vert X_{\frac{k}{2^n}+t}-X_{\frac{k}{2^n}}\vert \leq \frac{\epsilon}{2}\right\}$$ has positive $\PP_x$-probability. By the Markov property at time $\frac{k}{2^n}$ we get that with positive $(X_{\frac{k}{2^n} })_\star{\PP_x}\vert_{\{\frac{k}{2^n}<\zeta\}}$-measure in $z\in I$, therefore certainly for some $z\in I$, $$\PP_z(X\text{ attains a new minimum by jumping strictly over its past running minimum})>0.$$ Finally, the latter implies that for some (say) rational $a\in I$, $a<z$, with positive $\PP_z$-probability on $\{T_a^-<\zeta\}$, $X_{T_a^-}<a$, which is a contradiction.

Thus the equivalence of \ref{scales-equiv:i}-\ref{scales-equiv:ii}, and also of  \ref{scales-equiv:i}-\ref{scales-equiv:iii} absent \eqref{eq:positive-chances}, is established. 

Assume finally that $X$ meets the equivalent conditions \ref{scales-equiv:i}-\ref{scales-equiv:ii}. Let $q\in [0,\infty)$. It is plain from \eqref{eq:scale-identity} that $\Phi_q$ is nonincreasing and even strictly decreasing if $q>0$.  Besides, thanks to \eqref{eq:right-ct}, for $q>0$, by bounded convergence in \eqref{eq:scale-identity} in $\lll$ at fixed $x\in (\lll,\infty)$, $\Phi_q$ must be right-continuous at any $\lll\in I$. For $q=0$ the argument for the right-continuity of $\Phi_q$ is a little bit more delicate, because in principle it could happen that with positive $\PP_x$-probability $\zeta>T_b^-\uparrow \infty$ as $b\downarrow \lll$. Actually this is not possible in virtue of Kolmogorov's zero-one law: conditionally on $\cap_{b\in (\lll,\infty)}\{T_b^-<\zeta\}$, the sequence $(T_{\lll+\frac{1}{n+1}}^--T_{\lll+\frac{1}{n}}^-)_{n\in \mathbb{N}}$ 
 is a $\PP_x$-independency having $\{\lim_{b\downarrow \lll}T_b^-=\infty\}$ as a tail event; the event is not conditionally $\PP_x$-almost certain due to \eqref{eq:positive-chances}, therefore must be conditionally $\PP_x$-negligible, i.e. (by \eqref{eq:right-ct}) $T_\lll^-<\zeta$ a.s.-$\PP_x$ on $\cap_{b\in (\lll,\infty)}\{T_b^-<\zeta\}$. Hence even for $q=0$ one can pass to the limit in  \eqref{eq:scale-identity} by continuity of probability from above. 
As for left-continuity, suppose $\Phi_q$ were not left-continuous at some $x\in I^\circ$. Letting $\lll\uparrow x$ in \eqref{eq:scale-identity} it would mean by monotone convergence that with positive $\PP_x$-probability we would have $T_x^{\,\text{-}\,\text{-}}=\lim_{\lll\uparrow x}T_\lll^->0$. At the same time, by \eqref{eq:positive-chances} we have $\PP_x(T_x^{\,\text{-}\,\text{-}}<\zeta)>0$. Altogether this contradicts the strong Markov property of $X$ at the time $T_x^{\,\text{-}\,\text{-}}$ (taking into account that $X_{T_x^{\,\text{-}\,\text{-}}}=x$ a.s.-$\PP_x$ on $\{T_x^{\,\text{-}\,\text{-}}<\zeta\}$).
\end{proof}
It is quite agreeable that, when the scale functions of  \eqref{eq:scale-identity} exist at all, then automatically they have to be continuous, which entails all of the following:
\begin{itemize}
\item (by left-continuity of $\Phi_q$ for any $q\in [0,\infty)$) $X$ must be regular downwards, i.e. it must enter $(-\infty,x)\cap I$ immediately a.s.-$\PP_x$ for all $x\in I^\circ$; 
\item consequently (by the strong Markov property) $$T_\lll^{\,\text{-}\,\text{-}}=\text{$\downarrow$-$\lim$}_{b\uparrow \lll}T_b^-=T_\lll^-\text{ a.s. for all }\lll\in I^\circ,$$ so that  $T^-_\lll$ is a.s. left-continuous in $\lll\in I^\circ$ at fixed levels, hence together with \eqref{eq:right-ct} continuous at fixed levels a.s. (but usually not continuous a.s., of course);
 \item (by right-continuity of $\Phi_0$) $T_\lll^-<\zeta$ a.s. on $\cap_{b\in (\lll,\infty)}\{T_b^-<\zeta\}$ for all $\lll\in I$.
 \end{itemize}

\section{The scale functions} \label{section:grround-work}
\begin{center}
\fbox{\parbox{0.56\textwidth}{
In this section we assume that $X$ a.s. has no negative jumps.
}}
\end{center}
\subsection{Martingale description}
For $q\in [0,\infty)$ the scale function $\Phi_q$ of \eqref{eq:scale-identity} is basically the solution to a martingale problem.


%

\begin{proposition}\label{proposition:characterization-one}
Let $q\in [0,\infty)$ and let $\Phi:I\to \mathbb{R}$ be Borel (or even just universally measurable) and  bounded on $[\lll,\infty)$ for all $\lll\in I$. Then the  following two statements are equivalent.
\begin{enumerate}[(i)]
\item \label{equivalent:iii}  For all $\lll\in I$ the process $(\Phi(X_{t\land T_\lll^-})e^{-q(t\land T_\lll^-)}\mathbbm{1}_{\{t\land T_\lll^-<\zeta\}})_{t\in [0,\infty)}$ is a martingale  with $\PP_x$-terminal value $\Phi(\lll)e^{-qT_\lll^-}\mathbbm{1}_{\{T_\lll^-<\zeta\}}$ a.s.-$\PP_x$ for all $x\in [\lll,\infty)$.
\item \label{equivalent:vi} $\Phi(\lll)\PP_x[e^{-qT_\lll^-};T_\lll^-<\zeta]=\Phi(x)$ for all $x\in [\lll,\infty)$, all $\lll\in I$.
\end{enumerate}
Besides, if $\Phi$ is continuous and $q>0$, then  \ref{equivalent:iii}-\ref{equivalent:vi} are further equivalent to the next statement.
\begin{enumerate}[(i)]
 \setcounter{enumi}{2}
 \item\label{equivalent:i} If $\inf I\in I$, then $(\Phi(X_{t\land T_{\inf I}^-})e^{-q(t\land T_{\inf I}^-)}\mathbbm{1}_{\{t\land T_{\inf I}^-<\zeta\}})_{t\in [0,\infty)}$ is a martingale. If $\inf I\notin I$, then $(\Phi(X_{t})e^{-qt}\mathbbm{1}_{\{t<\zeta\}})_{t\in [0,\infty)}$ is a local martingale.
\end{enumerate}
Even if $q=0$ or $\Phi$ is not continuous, \ref{equivalent:iii}-\ref{equivalent:vi} imply \ref{equivalent:i}.
\end{proposition}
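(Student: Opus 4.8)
The plan is to establish the cycle $\ref{equivalent:vi}\Leftrightarrow\ref{equivalent:iii}$ first, then $\ref{equivalent:iii}\text{--}\ref{equivalent:vi}\Rightarrow\ref{equivalent:i}$ unconditionally, and finally $\ref{equivalent:i}\Rightarrow\ref{equivalent:iii}$ under the extra hypotheses; throughout I would write $Z_t:=\Phi(X_t)e^{-qt}\mathbbm{1}_{\{t<\zeta\}}$ and $M^\lll_t:=\Phi(X_{t\land T_\lll^-})e^{-q(t\land T_\lll^-)}\mathbbm{1}_{\{t\land T_\lll^-<\zeta\}}$, and lean on the fact that $M^\lll=Z^{T_\lll^-}$ is exactly the process appearing in \ref{equivalent:iii}. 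For $\ref{equivalent:iii}\Rightarrow\ref{equivalent:vi}$ I would evaluate the closed martingale of \ref{equivalent:iii} at the two ends of $[0,\infty]$: normality and $\zeta>0$ give $M^\lll_0=\Phi(x)$ a.s.-$\PP_x$ for $x\geq\lll$, while closure gives $\PP_x[M^\lll_0]=\PP_x[\text{terminal value}]=\Phi(\lll)\PP_x[e^{-qT_\lll^-};T_\lll^-<\zeta]$, which is \ref{equivalent:vi}.

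For the converse $\ref{equivalent:vi}\Rightarrow\ref{equivalent:iii}$, fix $\lll$, set $Y:=\Phi(\lll)e^{-qT_\lll^-}\mathbbm{1}_{\{T_\lll^-<\zeta\}}$, and show $M^\lll_t=\PP_x[Y\mid\FF_t]$ a.s. by splitting on the $\FF_t$-measurable sets $\{t\geq T_\lll^-\}$ and $\{t<T_\lll^-\}$. On the former, continuity at the minimum gives $X_{T_\lll^-}=\lll$ on $\{T_\lll^-<\zeta\}$, so $M^\lll_t=Y$ is already $\FF_t$-measurable. On the latter I would apply the Markov property at the deterministic time $t$ together with the shift identity $T_\lll^-=t+T_\lll^-\circ\theta_t$, valid on $\{t<T_\lll^-\land\zeta\}$, to get $\PP_x[Y\mid\FF_t]=\Phi(\lll)e^{-qt}\PP_{X_t}[e^{-qT_\lll^-};T_\lll^-<\zeta]=e^{-qt}\Phi(X_t)=M^\lll_t$, the middle equality being \ref{equivalent:vi} applied at $X_t\in[\lll,\infty)$; the degenerate case $t\geq\zeta$ makes both sides vanish. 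This exhibits $M^\lll$ as a bounded, hence uniformly integrable, martingale closed by $Y$, i.e. \ref{equivalent:iii}.

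Next, $\ref{equivalent:iii}\text{--}\ref{equivalent:vi}\Rightarrow\ref{equivalent:i}$ (needing neither continuity of $\Phi$ nor $q>0$): when $\inf I\in I$ the process $Z^{T_{\inf I}^-}=M^{\inf I}$ is a martingale by \ref{equivalent:iii} with $\lll=\inf I$, which is the assertion. When $\inf I\notin I$, take $\lll_n\downarrow\inf I$ and use $Z^{T_{\lll_n}^-}=M^{\lll_n}$, each a martingale by \ref{equivalent:iii}, as a localizing sequence; the whole content is to verify $T_{\lll_n}^-\uparrow\infty$ a.s., which I expect to be \emph{the main obstacle}. On $\{t<\zeta\}$ the continuity of $\underline X$ forces $\underline X_t\in I$, hence $\underline X_t>\inf I$, so $T_{\lll_n}^->t$ eventually; letting $t\uparrow\zeta$ gives $\lim_n T_{\lll_n}^-\geq\zeta$. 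To upgrade this to $\infty$ on $\{\zeta<\infty\}$ I would invoke non-announceability: if $T_{\lll_n}^-<\zeta$ for all $n$ with $T_{\lll_n}^-\uparrow\zeta<\infty$, then continuity at the minimum gives $X_{\zeta-}=\lim_n X_{T_{\lll_n}^-}=\lim_n\lll_n=\inf I$; for finite $\inf I$ this meets $\{X_{\zeta-}<\infty\}$ and contradicts total inaccessibility of $\zeta$ there, while for $\inf I=-\infty$ it contradicts the a.s. existence of $\lim_{\zeta-}X$ in $I\cup\{\infty\}$. Hence $\lim_n T_{\lll_n}^->\zeta$ a.s. on $\{\zeta<\infty\}$, and since $T_\lll^-\in[0,\zeta]\cup\{\infty\}$ some $T_{\lll_N}^-$ must equal $\infty$; thus $T_{\lll_n}^-\uparrow\infty$ and $Z$ is a local martingale.

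Finally, $\ref{equivalent:i}\Rightarrow\ref{equivalent:iii}$ under $\Phi$ continuous and $q>0$ (continuity guaranteeing $Z$ is c\`adl\`ag, so optional stopping applies cleanly): I would stop the (local) martingale of \ref{equivalent:i} at $T_\lll^-$. Since $T_\lll^-\leq T_{\inf I}^-$ when $\inf I\in I$, and since any localizing sequence $R_m\uparrow\infty$ yields $T_\lll^-\land R_m\uparrow T_\lll^-$ when $\inf I\notin I$, in both cases $M^\lll=Z^{T_\lll^-}$ is a local martingale. But $X_{t\land T_\lll^-}\in[\lll,\infty)$ on $\{t\land T_\lll^-<\zeta\}$, so $|M^\lll|\leq\sup_{[\lll,\infty)}|\Phi|<\infty$ by the boundedness hypothesis, and a bounded local martingale is a uniformly integrable martingale. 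Its a.s. limit is $Y$: on $\{T_\lll^-<\infty\}$ by continuity at the minimum, and on $\{T_\lll^-=\infty\}$ because $q>0$ together with boundedness of $\Phi$ on $[\lll,\infty)$ force $M^\lll_t=\Phi(X_t)e^{-qt}\mathbbm{1}_{\{t<\zeta\}}\to0=Y$. This yields \ref{equivalent:iii}, whence \ref{equivalent:vi} by the already-established implication, closing the equivalence. The decisive technical point remains the localization $T_{\lll_n}^-\uparrow\infty$, which is where non-announceability does the essential work.
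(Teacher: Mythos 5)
Your proposal is correct and follows essentially the same route as the paper's own proof: the Markov-property computation (plus continuity at the minimum) for the Laplace-transform-to-martingale direction, closure at time zero for the converse, localization by $T_{\lll_n}^-$ with $\lll_n\downarrow\inf I$ for the (local) martingale statement, and optional stopping plus boundedness of $\Phi$ on $[\lll,\infty)$ with identification of the terminal value (the content of the paper's Remark~\ref{remark:q>0}) for the reverse implication. The only noteworthy deviation is in establishing $T_{\lll_n}^-\uparrow\infty$ a.s. when $\inf I\notin I$: where you invoke non-announceability of $\zeta$ for finite $\inf I$, the paper gets the same fact purely from the l\`ag property and the assumption that $\lim_{\zeta-}X$ exists in $I\cup\{\infty\}$ (a limit equal to $\inf I\notin I\cup\{\infty\}$ is already a contradiction, no total inaccessibility needed) --- both arguments are valid under the standing assumptions, so this is a cosmetic rather than substantive difference.
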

\begin{remark}\label{remark:q>0}
When $q>0$ the terminal value condition in \ref{equivalent:iii} can be dropped (it follows automatically). Not for $q=0$ (even just a L\'evy process can have $\zeta=\infty$ so that $\Phi\equiv 1$ gives a martingale in \ref{equivalent:iii}, yet be drifting to $\infty$ so that \ref{equivalent:vi} fails with $\Phi\equiv 1$ [both for $q=0$]). Usually the case $q=0$ is handled by taking the limit $q\downarrow 0$.
\end{remark}
\begin{remark}
If $\Phi>0$ then \ref{equivalent:vi}  implies \eqref{eq:positive-chances} and is equivalent to \eqref{eq:scale-identity} for $\Phi_q=\Phi$. We do not want to assume  \eqref{eq:positive-chances} outright, because usually this is a non-trivial observation to be made about the process, and finding a $\Phi$ that is $>0$ is one way to do it. If \eqref{eq:positive-chances} should be found to fail, then all is not lost in terms of \eqref{eq:scale-identity} for it may be possible to decompose the state space of $X$ into classes within which $X$ ``communicates downwards'' and treat the first passage problem downwards separately within each class.
\end{remark}
\begin{proof}
If \ref{equivalent:vi} holds true, then  \ref{equivalent:iii} does also, which we deduce using the Markov property  for $X$ and the continuity of $X$ at the minimum: 
\begin{align*}
\PP_x[\Phi(\lll)e^{-qT_\lll^-}\mathbbm{1}_{\{T_\lll^-<\zeta\}}\vert \FF_t]&=\Phi(\lll)e^{-qT_\lll^-}\mathbbm{1}_{\{T_\lll^-\leq t\}}+\Phi(\lll)e^{-qt}\PP_{X_t}[e^{-qT_\lll^-};T_\lll^-<\zeta]\mathbbm{1}_{\{t<\zeta\land T_\lll^-\}}+0\cdot \mathbbm{1}_{\{\zeta\leq t<T_\lll^-=\infty\}}\\
&=\Phi(\lll)e^{-qT_\lll^-}\mathbbm{1}_{\{T_\lll^-\leq t\}}+\Phi(X_t)e^{-qt}\mathbbm{1}_{\{t<\zeta\land T_\lll^-\}}\\
&=\Phi({X}_{t\land T_\lll^-})e^{-q(t\land T_\lll^-)}\mathbbm{1}_{\{t\land T_\lll^-<\zeta\}}
\end{align*}
a.s.-$\PP_x$ for all $x\in [\lll,\infty)$, $\lll\in I$ and $t\in [0,\infty)$. From \ref{equivalent:iii} we get \ref{equivalent:vi} by taking conditional expectation of the terminal value at time zero (and by using the normality of $X$ and $\zeta>0$ a.s.). 

 Statements~\ref{equivalent:iii}-\ref{equivalent:vi} imply \ref{equivalent:i} because in case $\inf I\notin I$, $T_\lll^-\uparrow \infty$ a.s. as $\lll\downarrow \inf I$  (which is true thanks to $\lim_{\zeta-}X$ existing in $I\cup \{\infty\}$ a.s. on $\{0<\zeta<\infty\}$ and the l\`ag property of $X$ in $I$ on $(0,\zeta)$). The converse follows by optional stopping (here we use right-continuity of the processes, which we get from the continuity of $\Phi$ and from $X$ being c\`ad) and from  Remark~\ref{remark:q>0} (which uses $q>0$).
\end{proof}

\subsection{Generator description}
The $\Phi_q$, $q\in [0,\infty)$, of \eqref{eq:scale-identity} are also characterized neatly via the (martingale) generator of $X$, at least when $q>0$ (recall Remark~\ref{remark:q>0}). 

In preparation thereof,  let  $f:I\to \mathbb{R}$ be continuous  (we will be doing optional stopping and for this we need some right-continuity of processes), bounded on $[a,\infty)$  for all $a\in I$ and let $\AA f:I^\circ\to \mathbb{R}$ be locally bounded Borel. No separate existence of an object $\AA$ is a priori implied here, and formally $\AA f$ is to be treated as a ``single symbol'' that could just as well be e.g. $g$, but informally $\AA$ will play the role of the generator of $X$; we will comment on this piece of notation further below. Let also $a\in I^\circ$. We make the following observations.

$(\bullet_1)$ For a given $q\in [0,\infty)$,
\begin{align}
&\text{there exists a sequence $T=(T_n)_{n\in \mathbb{N}}$ of $\FF$-stopping times bounded by $\zeta$ that is a.s. $\uparrow \zeta$}\nonumber\\
\nonumber
&\text{and satisfying $T_n<\zeta$ on $\{0<\zeta<\infty,X_{\zeta-}=\infty\}$ for each $n\in\mathbb{N}$}\\\nonumber
&\text{(but [automatically] $T_n=\zeta$  for all but finitely many $n\in \mathbb{N}$ a.s. on $\{\zeta<\infty,X_{\zeta-}<\infty\}$)}\\\nonumber
&\text{and such that}\\
\label{eq:loc-mtg}
&\left(f(X_{t\land T_a^-\land T_n})e^{-q(t\land T_a^-\land T_n)}\mathbbm{1}_{\{t\land T_a^-\land T_n< \zeta\}}-\int_0^{t\land T_a^-\land T_n} (\AA f(X_s)-qf(X_s))e^{-qs}\dd s\right)_{t\in [0,\infty)}\\
&\text{is a local martingale for all $n\in \mathbb{N}$,}\tag{${f,\AA f}\choose {q,a}$}\label{A}
 \end{align}\label{page:discussion}
holds true if and only if the latter is verified for the sequence $T$ having $T_n$ equal to $$T_n^+:=\inf\{t\in [0,\zeta):X_t> n\}\land\zeta,\quad n\in \mathbb{N}$$ (sufficiency is trivial; necessity is seen by stopping \eqref{eq:loc-mtg} at $T_m^+$, $m\in\mathbb{N}$, and passing to the limit $n\to\infty$). 

$(\bullet_2)$ For all $n\in \mathbb{N}$: $(\bullet_{2a})$ the process of \eqref{eq:loc-mtg} with $T_n=T_n^+$  is even a.s. bounded on each bounded deterministic interval (hence a martingale if it is a local martingale), no matter what the $q\in [0,\infty)$; and,  $(\bullet_{2b})$ \eqref{eq:loc-mtg} (still with $T_n=T_n^+$ ) is a martingale for $q=0$ iff it is a martingale with some given $q\in (0,\infty)$, in which case it is a martingale for all $q\in [0,\infty)$. 
\begin{proof}$(\bullet_{2a})$ is immediate. 

To see  $(\bullet_{2b})$, assume (*) \eqref{eq:loc-mtg} is a martingale for some $q\in [0,\infty)$, let $S$ be a bounded $\FF$-stopping time, write $\eta:=T_a^-\land T_n^+\land S$ for short, and compute for $x\in I$, $q\in (0,\infty)$, as follows, noting that all the maps against which we Lebesgue-Stieltjes integrate (in $\dd_t$ below, $t$ indicating the integration ``dummy'' variable) are of bounded variation on account of (*): 
\begin{align*}
& \PP_x\left[f(X_{ \eta})e^{-q\eta}\mathbbm{1}_{\{ \eta< \zeta\}}-\int_0^{ \eta} (\AA f(X_s)-qf(X_s))e^{-qs}\dd s\right]\\
&= \PP_x\left[f(X_{ \eta})e^{-q\eta}\mathbbm{1}_{\{ \eta< \zeta\}}+\int_0^\eta  qe^{-qs}f(X_s)\dd s\right]-\int_0^\infty  e^{-qs}\PP_x[\AA f(X_s);s<\eta]\dd s\\
&= \PP_x\left[f(X_{ \eta})e^{-q\eta}\mathbbm{1}_{\{ \eta< \zeta\}}+\int_0^\eta  qe^{-qs}f(X_s)\dd s\right]-\int_0^\infty  e^{-qt}\dd_t \left(\int_0^t\PP_x[\AA f(X_s);s<\eta]\dd s\right)\\
&= \PP_x\left[f(X_{ \eta})e^{-q\eta}\mathbbm{1}_{\{ \eta< \zeta\}}+\int_0^\eta  qe^{-qs}f(X_s)\dd s\right]-\int_0^\infty  e^{-qt}\dd_t \left(\PP_x\left[\int_0^{t\land \eta}\AA f(X_s)\dd s\right]\right)\\
&= \PP_x\left[f(X_{ \eta})e^{-q\eta}\mathbbm{1}_{\{ \eta< \zeta\}}+\int_0^\eta  qe^{-qs}f(X_s)\dd s\right]-\int_0^\infty  e^{-qt}\dd_t \left(\PP_x[f(X_{t\land \eta});t\land \eta<\zeta]-f(x)\right)\\
&\quad +\int_0^\infty  e^{-qt}\dd_t \left(\PP_x[f(X_{t\land \eta});t\land \eta<\zeta]-f(x)-\PP_x\left[\int_0^{t\land \eta}\AA f(X_s)\dd s\right]\right)\\
&= \PP_x\left[f(X_{ \eta})e^{-q\eta}\mathbbm{1}_{\{ \eta< \zeta\}}+\int_0^\eta  qe^{-qs}f(X_s)\dd s\right]-\int_0^\infty q e^{-qt} \left(\PP_x[f(X_{t\land \eta});t\land \eta<\zeta]-f(x)\right)\dd t\\
&\quad +\int_0^\infty  e^{-qt}\dd_t \left(\PP_x\left[f(X_{t\land \eta})\mathbbm{1}_{\{t\land \eta<\zeta\}}-f(x)-\int_0^{t\land \eta}\AA f(X_s)\dd s\right]\right) \text{ (via per partes)}\\
&=f(x)+\PP_x\left[f(X_{ \eta})e^{-q\eta}\mathbbm{1}_{\{ \eta< \zeta\}}+\int_0^\eta  qe^{-qs}f(X_s)\dd s-\int_0^{\infty} q e^{-qt}f(X_{t\land \eta})\mathbbm{1}_{\{ t\land \eta< \zeta\}}\dd t\right] \\
&\quad +\int_0^\infty  e^{-qt}\dd_t \left(\PP_x\left[f(X_{t\land \eta})\mathbbm{1}_{\{t\land \eta<\zeta\}}-f(x)-\int_0^{t\land \eta}\AA f(X_s)\dd s\right]\right)\\
&=f(x)+\int_0^\infty  e^{-qt}\dd_t \left(\PP_x\left[f(X_{t\land \eta})\mathbbm{1}_{\{t\land \eta<\zeta\}}-f(x)-\int_0^{t\land \eta}\AA f(X_s)\dd s\right]\right),
\end{align*}
since (for the last equality) on the event $\{\eta\geq \zeta\}$, $\eta=\zeta$. 

Now, if \eqref{eq:loc-mtg} is a martingale for $q=0$, then we get from the preceding computation, for a general $q$, that $ \PP_x\left[f(X_{ \eta})e^{-q\eta}\mathbbm{1}_{\{ \eta< \zeta\}}-\int_0^{ \eta} (\AA f(X_s)-qf(X_s))e^{-qs}\dd s\right]=f(x)$. This being true for all bounded $\FF$-stopping times $S$ renders \eqref{eq:loc-mtg} a martingale with this general $q$.  

Conversely, if  \eqref{eq:loc-mtg} is a martingale for some $q\in (0,\infty)$, then we get, again by the preceding computation, with this $q$, that $\int_0^\infty  e^{-qt}\dd_t \left(\PP_x\left[f(X_{t\land \eta})\mathbbm{1}_{\{t\land \eta<\zeta\}}-f(x)-\int_0^{t\land \eta}\AA f(X_s)\dd s\right]\right)=0$. This being true also if we replace $S$ with $S\land a$ for an arbitrary $a\in [0,\infty)$, we see that $\int_0^\cdot  e^{-qt}\dd_t \left(\PP_x\left[f(X_{t\land \eta})\mathbbm{1}_{\{t\land \eta<\zeta\}}-f(x)-\int_0^{t\land \eta}\AA f(X_s)\dd s\right]\right)$ vanishes identically as a function of the upper delimiter, which can only be if $\PP_x\left[f(X_{t\land \eta})\mathbbm{1}_{\{t\land \eta<\zeta\}}-f(x)-\int_0^{t\land \eta}\AA f(X_s)\dd s\right]$ is in fact constant (and hence $=0$). The latter in turn, $S$ still being arbitrary, renders \eqref{eq:loc-mtg}  a martingale for $q=0$.
\end{proof}
  The reader may compare  $(\bullet_{2b})$ with  \cite[Lemma~4.3.2]{ethier} for the case when $\zeta=\infty$  (which is incidental by extension of space with a cemetery) and without the ``localization'' with the $T_a^-\land T_n^+$ (which as we have just seen also does not really matter, but this was not obvious to begin with). \label{page:after-proof}
  
The reason we ``localize'' with $T_n$ and $T_a^-$ in \eqref{eq:loc-mtg} is, informally speaking, because there may be problems with explosions and  because there may also be issues with the behavior at the boundary $\inf I$. It is not just an intellectual exercise -- the author himself had to contend with both of these unpleasantries in \cite{vidmar2021continuousstate}. In any event, the point is that under our standing assumptions it will be enough to work with the  ``local  behavior'' of $X$, in and only up to exit from $I^\circ$. 

In practice, for a given $f$, we have in mind that the existence of an $\AA f$ satisfying \eqref{A} for all $q\in [0,\infty)$ and $a\in I^\circ$ comes  from an It\^o(-type) formula and/or by time-change, and this is really where the strength of working with generators comes from; we return to this on a relatively pleasant  class of processes in Proposition~\ref{proposition:fund-class}. 

As for uniqueness, for a given $f$ and $q\in [0,\infty)$, we may note that at most one continuous $\AA f:I^\circ\to \mathbb{R}$ exists satisfying \eqref{A} for all $a\in I^\circ$. Indeed, if $\tilde\AA f$ also meets these requirements, then we get that $\int_0^te^{-qs}(\AA f(X_s)-\tilde\AA f(X_s))\dd s=0$ for all $t\in [0,\zeta\land T_\lll^-)$ a.s.-$\PP_x$ for all $\lll< x$ from $I^\circ$ ($\because$ a continuous local martingale of finite variation is constant a.s.). Taking the right-derivative at $0$ in $t$, the continuity of $\AA f-\tilde\AA f$ and the right-continuity of $X$ at $0$ (together with the normality of $X$ and $\zeta>0$ a.s.) entail that $\AA f(x)=\tilde\AA f(x)$. Thus, at least informally, it is suggestive to think of $\AA f$ as being, as it were, attributed to $f$, hence the notation.

With the above preliminaries sorted we have now a description of a  $\Phi_q$, $q\in (0,\infty)$, of  \eqref{eq:scale-identity} in terms a generator-eigenvalue problem. In some sense it is just a restatement of the martingale property, modulo the localizations, but these localizations are non-trivial.

\begin{proposition}\label{proposition:characterization-generator}
Let $q\in [0,\infty)$,  let $\Phi:I\to \mathbb{R}$ be continuous and  bounded on $[a,\infty)$ for all $a\in I$, and 
suppose furthermore that $\lim_\infty\Phi$ exists when it is not the case that  $\{0<\zeta<\infty,X_{\zeta-}=\infty\}$ is negligible. Then \ref{equivalent:i} of Proposition~\ref{proposition:characterization-one} is equivalent to 
\begin{equation}\label{equivalent:ii}
\begin{aligned}
&\text{\eqref{A} holds true with $f=\Phi$, $\AA f=q\Phi\vert_{I^\circ}$ for all $a\in I^\circ$, and}\\
&\text{($\lim_\infty\Phi=0$ or $\{0<\zeta<\infty,X_{\zeta-}=\infty\}$ is negligible).}
\end{aligned}
 \end{equation} 
In fact \eqref{equivalent:ii} implies  Proposition~\ref{proposition:characterization-one}\ref{equivalent:i} [resp. \ref{equivalent:iii}-\ref{equivalent:vi}] even if $\Phi$ is merely right-continuous (in lieu of continuity) [resp. and if $q>0$].
\end{proposition}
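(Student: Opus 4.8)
The plan is to read off what \eqref{equivalent:ii} says once $\AA f=q\Phi|_{I^\circ}$ is inserted, and then to prove the two implications by localizing with the times $T_a^-\land T_n^+$ ($a\in I^\circ$, $n\in\mathbb{N}$) and closing up at the two ``boundaries'' $\inf I$ and $\infty$. Throughout write $M_t:=\Phi(X_t)e^{-qt}\mathbbm{1}_{\{t<\zeta\}}$ for $t\in[0,\infty)$. First I would observe that the integral term in \eqref{eq:loc-mtg} disappears: for $f=\Phi$ and $\AA f=q\Phi|_{I^\circ}$ the integrand $\AA f(X_s)-qf(X_s)$ equals $q\Phi(X_s)-q\Phi(X_s)=0$ whenever $s<T_a^-$, since then $X_s>a>\inf I$ sits in $I^\circ$. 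Invoking the preamble (reduction of the localizing sequence to $T_n=T_n^+$, and the fact that the stopped process is bounded on bounded deterministic intervals, so that ``local martingale'' may there be read as ``martingale'', uniformly in $q\in[0,\infty)$), the first clause of \eqref{equivalent:ii} is \emph{exactly} the assertion $(\star)$ that $(M_{t\land T_a^-\land T_n^+})_{t\in[0,\infty)}$ is a martingale for every $a\in I^\circ$ and every $n\in\mathbb{N}$. Thus \eqref{equivalent:ii} amounts to $(\star)$ together with the stated boundary condition at $\infty$.

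For \ref{equivalent:i}$\Rightarrow$\eqref{equivalent:ii} I would get $(\star)$ by optional stopping. If $\inf I\in I$ then $a>\inf I$ gives $T_a^-\land T_n^+\leq T_{\inf I}^-$, so $M^{T_a^-\land T_n^+}=(M^{T_{\inf I}^-})^{T_a^-\land T_n^+}$ is a martingale because $M^{T_{\inf I}^-}$ is; if $\inf I\notin I$ then $M$ is a local martingale, and stopping at $T_a^-\land T_n^+$, followed by the boundedness-on-bounded-intervals remark, again yields a martingale. (Alternatively, for $q>0$ one may route through Proposition~\ref{proposition:characterization-one}: \ref{equivalent:i}$\Rightarrow$\ref{equivalent:vi}$\Rightarrow$\ref{equivalent:iii}, and then $(\star)$ by stopping; the case $q=0$ follows from the $q$-independence of \eqref{A} established in the preamble.) For the boundary condition I may assume $E:=\{0<\zeta<\infty,X_{\zeta-}=\infty\}$ is non-negligible, so that $c:=\lim_\infty\Phi$ exists by hypothesis, and must show $c=0$. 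Here I would use that the (stopped, resp.\ suitably $T_a^-$-localized) martingale of \ref{equivalent:i} is, near $\zeta$ on $E$, a genuine bounded martingale with $M_t\to c\,e^{-q\zeta}$ as $t\uparrow\zeta$ (because $X_t\to\infty$ and $\Phi(X_t)\to c$) while its value at $\zeta$ is $0$; as $\zeta$ is on $E$ announced by $(T_n^+)_n$, the resulting predictable jump must have vanishing $\FF_{\zeta-}$-conditional mean, i.e.\ $c\,e^{-q\zeta}=0$, forcing $c=0$.

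For \eqref{equivalent:ii}$\Rightarrow$\ref{equivalent:i}, which is the ``in fact'' assertion and needs $\Phi$ only right-continuous, I would pass to the limit in the martingale identity $\PP_x[M_{t\land T_a^-\land T_n^+}\mid\FF_s]=M_{s\land T_a^-\land T_n^+}$ along $a\downarrow\inf I$ and $n\to\infty$, using dominated convergence (boundedness on bounded intervals). By continuity of $X$ at the minimum one has $T_a^-\uparrow T_{\inf I}^-$ as $a\downarrow\inf I$ when $\inf I\in I$ (and $T_a^-\uparrow\infty$ when $\inf I\notin I$, as recorded in the proof of Proposition~\ref{proposition:characterization-one}), while $T_n^+\uparrow\zeta$; so $T_a^-\land T_n^+$ increases to $T_{\inf I}^-\land\zeta$, resp.\ to $\zeta$. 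The only delicate points are matching the limiting (left-limit) values to the stopped values at the two boundaries. At $\inf I$ (in force when $\inf I\in I$ and $T_{\inf I}^-<\zeta$) the approximants reach $T_{\inf I}^-$ strictly from below with $X_{T_a^-}=a\downarrow\inf I$ \emph{from the right}, so right-continuity of $\Phi$ at $\inf I$ together with $X_{T_{\inf I}^- -}=\inf I$ (continuity at the minimum) gives $M_{T_{\inf I}^- -}=\Phi(\inf I)e^{-qT_{\inf I}^-}=M_{T_{\inf I}^-}$. At $\infty$, i.e.\ on $E$ where the approximants increase strictly to $\zeta<\infty$ with $X\to\infty$, the limiting value is $c\,e^{-q\zeta}$, which the boundary condition $c=0$ (or negligibility of $E$) equates to the killed value $M_\zeta=0$. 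Assembling the pieces: for $\inf I\in I$ the limit is $M^{T_{\inf I}^-}$, a martingale; for $\inf I\notin I$ the sequence $T_a^-\land T_n^+$ localizes $M$ to a local martingale.

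The main obstacle is precisely this boundary bookkeeping under the double limit, and within it the appeal to non-announcability: it is what excludes the otherwise fatal possibility of first-passage (or $T_n^+$) times increasing strictly to $\zeta$ on $\{0<\zeta<\infty,X_{\zeta-}<\infty\}$, where $M_{\zeta-}$ need not vanish --- on that event inaccessibility forces $T_n^+=\zeta$ eventually, so the approximants actually attain $\zeta$ and there is no spurious left-limit to reconcile. Carefully partitioning according to $\{T_{\inf I}^-<\zeta\}$, $\{T_{\inf I}^-\geq\zeta\}\cap\{X_{\zeta-}<\infty\}$ and $\{T_{\inf I}^-\geq\zeta\}\cap E$ (the last being where the escape to $\infty$ is actually felt, and where the negligibility-or-$c=0$ dichotomy is used), and checking on each piece that the a.s.\ and $L^1$ limits agree, is where the real work sits; the martingale algebra itself --- and the role of continuity of $\Phi$, used to render $\Phi\circ X$, hence $M$, c\`adl\`ag so that optional sampling and jump-compensation apply in the reverse direction --- is routine once these cases are isolated.
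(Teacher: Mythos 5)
Your handling of the implication \eqref{equivalent:ii}$\Rightarrow$\ref{equivalent:i} is essentially the paper's own argument and is sound: read \eqref{A} with $f=\Phi$, $\AA f=q\Phi\vert_{I^\circ}$ (so the integral term vanishes before $T_a^-$) as the martingale property of $(\Phi(X_{t\land T_a^-\land T_n^+})e^{-q(t\land T_a^-\land T_n^+)}\mathbbm{1}_{\{t\land T_a^-\land T_n^+<\zeta\}})_{t\in[0,\infty)}$, then take the double limit $n\to\infty$, $a\downarrow\inf I$, matching left limits at the two boundaries via right-continuity of $\Phi$, continuity at the minimum, \eqref{eq:right-ct}, non-announcability on $\{0<\zeta<\infty,X_{\zeta-}<\infty\}$, and the dichotomy of \eqref{equivalent:ii} to kill the contribution of $E:=\{0<\zeta<\infty,X_{\zeta-}=\infty\}$. (One slip: $T_a^-\land T_n^+$ increases to $\zeta$, not to $\infty$, so it is not a localizing sequence when $\inf I\notin I$; use instead that $T_a^-\uparrow\infty$ a.s. as $a\downarrow\inf I$ there, exactly as the paper does.) Likewise, getting the first clause of \eqref{equivalent:ii} from \ref{equivalent:i} by optional stopping is fine.

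The genuine gap is in deducing the \emph{second} clause of \eqref{equivalent:ii} from \ref{equivalent:i} when $\inf I\in I$. In that case \ref{equivalent:i} only hands you the stopped process $M^{T_{\inf I}^-}$, $M_t=\Phi(X_t)e^{-qt}\mathbbm{1}_{\{t<\zeta\}}$, as a martingale. On $E\cap\{T_{\inf I}^-<\zeta\}$ this process is frozen at the value $\Phi(\inf I)e^{-qT_{\inf I}^-}$ strictly before $\zeta$: it does \emph{not} converge to $c\,e^{-q\zeta}$ as $t\uparrow\zeta$ and it has no jump at $\zeta$ there, contrary to what your sketch asserts. Your predictable-jump argument therefore yields only $c\,e^{-q\zeta}\mathbbm{1}_{E\cap\{T_{\inf I}^-=\infty\}}=0$ a.s., i.e. ``$c=0$ or $E\cap\{T_{\inf I}^-=\infty\}$ is negligible'', which is strictly weaker than the required ``$c=0$ or $E$ is negligible''. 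You are left with the unexcluded scenario $c\neq 0$, $\PP_{x_0}(E)>0$ for some $x_0$, and $T_{\inf I}^-<\zeta$ a.s. on $E$; no amount of stopping-time bookkeeping on the martingale of \ref{equivalent:i} can address it, since that martingale carries no information after $T_{\inf I}^-$ (and under $\PP_{\inf I}$ it is trivially constant). The paper closes exactly this case by a separate pathwise argument: an inductive application of the strong Markov property shows that, on $E$, a.s., $X$ would have to hit $\inf I$, then (because $X_{\zeta-}=\infty$) exceed $1+\inf I$, then hit $\inf I$ again, and so on infinitely often before the finite time $\zeta$, contradicting the a.s. existence of left limits of $X$ in $I\cup\{\infty\}$ on $(0,\zeta]$ on $\{\zeta<\infty\}$. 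Some such argument must be added to your proof. A secondary, repairable issue: $\FF$ is not assumed right-continuous and, for $\inf I\notin I$, \ref{equivalent:i} provides only a \emph{local} martingale, so invoking ``predictable jumps have vanishing $\FF_{\zeta-}$-conditional mean'' requires care (announcing the restriction of $\zeta$ to $E$, integrability of the jump); the paper avoids this machinery altogether by exhibiting two concrete martingales whose difference, $\Phi(\infty)e^{-q\zeta}\mathbbm{1}_{\{\zeta\leq t,\,T_\lll^-=\infty\}\cap E}$, must then itself be a martingale vanishing at $0$.
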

\begin{proof}
Set $\Phi(\infty):=\lim_\infty\Phi$ when it is not the case that $\{0<\zeta<\infty,X_{\zeta-}=\infty\}$ is negligible,  $\Phi(\infty):=0$ (arbitrary) otherwise.

If \ref{equivalent:i} holds true then certainly \eqref{A} is verified with $f=\Phi$, $\AA f=q\Phi\vert_{I^\circ}$ for all $a\in I^\circ$, by stopping (taking $T_n=T_n^+$, $n\in \mathbb{N}$). Suppose now the latter holds true.
For all $\lll\in I$, for each $a\in (\lll,\infty)$, there exists a sequence $(T_n)_{n\in \mathbb{N}}$  of $\FF$-stopping times having the indicated properties of \eqref{A} such that  for all $n\in \mathbb{N}$ the process 
$$\left(\Phi(X_{t\land T_a^-\land T_n})e^{-q(t\land T_a^-\land T_n)}\mathbbm{1}_{\{t\land T_a^-\land T_n< \zeta\}}\right)_{t\in [0,\infty)}$$
 is a local martingale, indeed an a.s. bounded martingale. Passing to the limit $n\to\infty$ we get by bounded convergence that the process $(\Phi(X_{t\land T_a^-})e^{-q(t\land T_a^-\land\zeta)}\mathbbm{1}_{\{t\land T_a^-<\zeta\}\cup \{0<\zeta<\infty,X_{\zeta-}=\infty\}})_{t\in [0,\infty)}$ is a martingale (recall that $X=\infty$ on $[\zeta,\infty)$). Passing further to the limit $a\downarrow \lll$ we obtain again by bounded convergence, using the right-continuity of $\Phi$ at $\lll$, the continuity at the minimum of $X$  and \eqref{eq:right-ct}, that the process $(\Phi(X_{t\land T_\lll^-})e^{-q(t\land T_\lll^-\land\zeta)}\mathbbm{1}_{\{t\land T_\lll^-<\zeta\}\cup \{0<\zeta<\infty,X_{\zeta-}=\infty\}})_{t\in [0,\infty)}$ is  a martingale. In particular \eqref{equivalent:ii} implies \ref{equivalent:i}  [resp. \ref{equivalent:iii}-\ref{equivalent:vi}] of Proposition~\ref{proposition:characterization-one}  [resp.  if also $q>0$] (even if $\Phi$ is merely right-continuous).

Suppose now again that \ref{equivalent:i} of Proposition~\ref{proposition:characterization-one} holds true. By the preceding, for all $\lll\in I$, the processes $(\Phi(X_{t\land T_\lll^-})e^{-q(t\land T_\lll^-)}\mathbbm{1}_{\{t\land T_x^-<\zeta\}})_{t\in [0,\infty)}$ and  $(\Phi(X_{t\land T_\lll^-})e^{-q(t\land T_\lll^-\land\zeta)}\mathbbm{1}_{\{t\land T_\lll^-<\zeta\}\cup \{0<\zeta<\infty,X_{\zeta-}=\infty\}})_{t\in [0,\infty)}$ are both martingales. Their difference, $(\Phi(\infty)e^{-q\zeta}\mathbbm{1}_{\{\zeta\leq t,T_\lll^-=\infty\}\cap \{0<\zeta<\infty,X_{\zeta-}=\infty\}})_{t\in [0,\infty)}$ is therefore also a martingale. But this is only possible if either $\Phi(\infty)=0$ or else $T_\lll^-<\zeta$ a.s. on $ \{0<\zeta<\infty,X_{\zeta-}=\infty\}$ for all $\lll\in I$. Inspect the latter.  When $\inf I\notin I$ it clearly means that $ \{0<\zeta<\infty,X_{\zeta-}=\infty\}$ is negligible, just because in that case $T_\lll^-\uparrow\infty$ a.s. as $\lll\downarrow\inf I$. If $\inf I\in I$ it means that $T_{\inf I}^-<\infty$ a.s. on $\{0<\zeta<\infty,X_{\zeta-}=\infty\}=:E$. Suppose, per absurdum, that for some $x_0\in I$, $\PP_{x_0}(E)>0$. By an inductive application of the strong Markov property, on $E$, a.s.-$\PP_{x_0}$, the process $X$ must hit $\inf I$, then attain a level that is $>1+\inf I$, then hit $\inf I$ again and so on and so forth, infinitely often, all in finite time $<\zeta<\infty$. This contradicts  the a.s. existence of left limits of $X$ in $I\cup\{\infty\}$ on the temporal interval $(0,\zeta]$ on the event $\{\zeta<\infty\}$.

Together with what has been established above we infer that also \ref{equivalent:i} of Proposition~\ref{proposition:characterization-one} implies \eqref{equivalent:ii}.
\end{proof}

 \subsection{Time-changed L\'evy processes}
As previously announced, let us  turn here to the determination of an $\AA f$ satisfying \eqref{A}, and to the establishment of  some facts pertaining to the scale functions of \eqref{eq:scale-identity}, both of which we shall do for time-changed, pk-spLp. 

The statement to follow covering all this is certainly known in many particular cases, and (for the most part) perhaps even obvious to the trained eye, but it seems safer to go through the details to preclude the possibility of anything ``adverse'' happening in the general case. It should also be mentioned that time-changing a (nice) Markov process by the inverse of a continuous additive functional yields a (nice) Markov process always \cite[(2.11) \& (4.11)]{ge2011markov}, vis-\`a-vis Proposition~\ref{proposition:fund-class}\ref{fund:i} below, but we would avoid here using the heavy artillery and associated notational overhang of the general theory of Markov processes that is not really needed in reaching this conclusion. 

We suspend temporarily the overarching setting (for the purposes of the next proposition; and only to conclude it a posteriori), albeit $I$ is still a priori an interval of the real line, unbounded above, and $I^\circ$ its interior.  Notation-wise, for any  Laplace exponent $ \gamma$ of a pk-spLp (say of $Z=(Z_v)_{v\in [0,\delta)}$, then $\gamma(s)=\log \mathbb{E}[e^{-s(Z_1-Z_0)};1<\delta]$ for $s\in [0,\infty)$), $\gamma^{-1}:[0,\infty)\to [0,\infty)$ will denote the right-continuous inverse of $\gamma$: 
\begin{equation}\label{eq:notation}
\gamma^{-1}(u):=\inf\{s\in [0,\infty):\gamma(s)> u\},\quad  u\in [0,\infty).
\end{equation}

\begin{proposition}\label{proposition:fund-class}
Let $A:I^\circ\to(0,\infty)$ be locally bounded away from zero and Borel measurable, let $\psi:[0,\infty)\to \mathbb{R}$ be any Laplace exponent of a spectrally positive L\'evy process (for emphasis: no killing, $\psi(0)=0$), and let $p\in [0,\infty)$. We ask 
\begin{enumerate}[(a)]
\item\label{fund:a} in case $\inf I=-\infty$, that $A$ is bounded on $(-\infty,\lll]$ for some $\lll\in \mathbb{R}$;
\item\label{fund:b} in case $\inf I>-\infty$, that  $\inf I\in I$ iff $\int^\infty_{\psi^{-1}(0)+1} \frac{\dd\lambda}{\lambda A(\inf I+ \frac{1}{\lambda})\psi(\lambda)}<\infty$.
\end{enumerate}

Let also $\xi=(\xi_u)_{u\in [0,\eta)}$ be a pk-spLp with Laplace exponent $\psi-p$  under the complete probabilities $(\PP_x)_{x\in \mathbb{R}}$. Thus $e^{u(\psi(\lambda)-p)}=\PP_x[e^{-\lambda(\xi_u-\xi_0)};u<\eta]$ for $\{u,\lambda\}\subset [0,\infty)$, $x\in \mathbb{R}$; in particular (on taking $\lambda=0$), $\eta$ is exponential of rate $p$. We insist that $\xi$ is c\`adl\`ag and has no negative jumps, both with certainty (not just a.s.). Set $$\sigma:=\sigma^-_{\inf I}:=\inf \{u\in [0,\eta):\xi_{u-}\land \xi_u\leq \inf I\}$$ [$\xi_{0-}:=\xi_0$ on $\{\eta>0\}$] and $$F(v):=\int_0^v \frac{\dd u}{A(\xi_u)}\text{ for }v\in [0,\eta\land\sigma].$$  Then, if $\inf I>-\infty$, we have that $\inf I\in I$ iff 
\begin{equation}
\text{$F(\sigma)<\infty$ with positive $\PP_x$-probability on $\{\sigma<\eta\}$ for some $x\in I^\circ$},\tag{C}\label{C}
\end{equation}
 in which case $F(\sigma)<\infty$ a.s.-$\PP_x$ [and also with positive $\PP_x$-probability] on $\{\sigma<\eta\}$ for all $x\in I^\circ$.

 Put further $\tau:=F^{-1}$ on $[0,F(\eta\land\sigma))$ and define $$X_t:=
\begin{cases}
\xi_{\tau_t},& t<F(\eta\land\sigma)\\
\inf I,& t\geq F(\eta\land\sigma)
\end{cases},\quad t\in [0,\zeta),
$$ with $$
\text{$\zeta:=F(\eta)\mathbbm{1}_{\{\sigma=\infty\}}+\infty\mathbbm{1}_{\{\sigma<\eta\}}$
 or $\zeta:=F(\eta\land \sigma)\overset{\text{a.s.}}{=}F(\eta)$ according as to whether $\inf I\in I$ or not.}$$
 We have the following assertions. 
 
\begin{enumerate}[(i)]
\item\label{fund:i} The process $X=(X_t)_{t\in [0,\zeta)}$ in the filtration $\FF:=\overline{\FF^X_+}$ under the probabilities $(\PP_x)_{x\in  I}$ satisfies our standing assumptions, is regular downwards, has $\inf I$ as a trap (i.e. $X\vert_{[T_{\inf I}^-,\infty)}=\inf I$ on $\{T^-_{\inf I}<\zeta\}$) when $\inf I\in I$, and the map  $(I\ni x\mapsto \PP_x(t<\zeta,X_t\in A))$ is even Borel for all $A\in \mathcal{B}_I$. 

\begin{center}
The remainder of the statements of this proposition refer to the process of Item~\ref{fund:i}.
\end{center}
\item\label{fund:ii} If $A$ is locally bounded, then for every continuous $f:I\to\mathbb{R}$, which is bounded on $[x,\infty)$ for all $x\in I$, $C^2$ on $I^\circ$, and for all $a\in I^\circ$, $q\in [0,\infty)$, we have \eqref{A} by taking $\AA f=A\cdot 
\AA^{\psi,p}f$, where $$\AA^{\psi,p}f(x):=-pf(x)+\mathsf{d} f'(x)+\frac{\mathsf{v}^2}{2} f''(x)+\int (f(x+h)-f'(x)h\mathbbm{1}_{(0,1]}(h)-f(x))\mathrm{m}(\dd h),\quad x\in I^\circ,$$ and where $$\psi(z)=-\mathsf{d} z+\frac{\mathsf{v}^2}{2}z^2+\int( e^{-z h}+z h\mathbbm{1}_{(0,1]}(h)-1)\mathsf{m}(\dd h),\quad z\in [0,\infty),$$ uniquely determines the drift $\mathsf{d}\in \mathbb{R}$, volatility coefficient $\mathsf{v}\in [0,\infty)$ and jump (L\'evy) measure $\mathsf{m}$ on $\mathcal{B}_{(0,\infty)}$, satisfying $\mathsf{m}^h[h^2\land 1]<\infty$, of $\psi$.

\item\label{time-changed:i} The process $X$ satisfies \eqref{eq:positive-chances} and for $q=0$ we have  \eqref{eq:scale-identity} with $\Phi_0=e^{-\psi^{-1}(p)\cdot }$.
\item\label{time-changed:iii} Let $\psi^\#:=\psi(\psi^{-1}(p)+\cdot)-p$, so that $\psi^\#$ is the Laplace exponent of a (for emphasis: not killed) spectrally positive L\'evy process that is not drifting to $\infty$ (i.e. $(\psi^\#)^{-1}(0)=0$). Let further $q\in [0,\infty)$. Then up to a strictly positive multiplicative constant the map $\Phi_q$ of \eqref{eq:scale-identity} satisfies $$\Phi_q=\Phi_0\times\Phi_q^\#=e^{-\psi^{-1}(p)\cdot}\times \Phi_q^\#,$$ where $\Phi_q^\#$ is the map of \eqref{eq:scale-identity} but with (ceteris paribus) $\psi^\#$ the Laplace exponent of $\xi$.
\end{enumerate}
\end{proposition}

Intuitively, while in $I^\circ$, the process $X$ should be viewed as being got from $\xi$ by driving along its sample paths with a velocity that is the function $A$ of its position; exiting from $I^\circ$ the process $X$ is stopped at $\inf I$ or relegated to the cemetery $\infty$ (in a natural way) depending on the limiting behaviour.
\begin{proof}
Concerning the statement surrounding \eqref{C} use \ref{fund:b} and \cite[Theorem~2.1]{li2020integral} (strictly speaking it applies directly only for $p=0$, $\inf I=0$ and with $A$ bounded away from zero on $[\lll,\infty)$ for all $\lll\in I^\circ$, but the general case reduces easily to this one) recalling that we are excluding subordinators in the designation of spectrally positive L\'evy processes. When a.s. appears below without further qualification it (still) means a.s.-$\PP_x$ for all $x\in I$ [as per default; we mention it only because there are now also the $\PP_x$, $x\in \mathbb{R}\backslash I$, ``hanging around''].

\ref{fund:i}. The Borel measurability property of $(I\ni x\mapsto \PP_x(t<\zeta,X_t\in A))$ follows (by monotone class) from the apposite property for $\xi$, and the fact that $X$ is just a measurable transformation of $\xi$.

Then we analyze some path properties of $X$ which follow directly from the construction, ``omega-by-omega''. 

The map $F$  is a continuous strictly increasing bijection from $[0,\eta\land \sigma)$ onto $[0,F(\eta\land\sigma))$, just because it is  finite (since $A$ is locally bounded away from zero and $\xi$ is locally bounded away from $\inf I$ and $\infty$ on $[0,\eta\land \sigma)$) and since $A^{-1}>0$. Therefore, $\tau$  time-changes continuously and strictly increasingly the path of $\xi$ on $[0,\eta\land \sigma)$ onto the path of $X$ on $[0,F(\eta\land\sigma))$; additionally, if $\inf I\in I$, then, on $\{F(\sigma)<\infty,\sigma<\eta\}$, we have $\zeta=\infty$, $\lim_{F(\sigma)-}X=\lim_{\sigma-}\xi=\inf I$ and $X\vert_{[F(\sigma),\infty)}=\inf I$. Thus $X$ takes its values   and is c\`adl\`ag in $I$; if $\inf I\in I$, then, when at all (to be precise, on $\{\sigma<\eta,F(\sigma)<\infty\}$ at time $F(\sigma)$, while off $\{\sigma<\eta,F(\sigma)<\infty\}$ we have $\zeta=F(\eta\land \sigma)\overset{\text{a.s.}}{=}F(\eta)$), $X$ reaches $\inf I$ continuously and stays there (in particular $\inf I$ is  a trap). It is clear that $\zeta>0$ a.s.; the probabilities being normal for $\xi$, they are also normal for $X$, and $X$ inherits the regularity downwards property directly from $\xi$. Furthermore, $\lim_{\zeta-}X$ exists in $I\cup \{\infty\}$ a.s. on $\{0<\zeta<\infty\}$ because if $\inf I>-\infty$, then $\lim_\infty\xi=\infty$ a.s. on $\{\sigma=\infty=\eta\}$, while if $\inf I=-\infty$, then $\int_0^\infty\frac{\dd u}{A(\xi_u)}=\infty$ a.s. on $\{\eta=\infty,\lim_\infty\xi\ne\infty\}$ (the latter thanks to \ref{fund:a} and thanks to the asymptotic behaviour of L\'evy processes -- the event in question can only have positive probability when $\xi$ is drifting to $-\infty$ or oscillating, in which case it is easy to argue by the absence of negative jumps, the strong law of large numbers and the strong Markov property that $\xi$ spends an infinite Lebesgue amount of time below the level $\lll$ of \ref{fund:a}). Besides,  a.s. on $\{0<\zeta<\infty,X_{\zeta-}<\infty\}$, we have $\eta<\infty=\sigma$ and $\zeta=F(\eta)$.

Next, let $\GG=(\GG_u)_{u\in [0,\infty)}$ be the right-continuous modification of the natural filtration of $\xi$, relative to which then $\xi$ is adapted and has independent increments, in particular is strong Markov and quasi left-continuous. We extend the process $F$ to $[0,\infty)$ by putting $\xi=\infty$ on $[\eta,\infty)$ and interpreting $A(\partial)=1$ for $\partial\notin I^\circ$, then set $F(\infty):=\lim_\infty F$; we also extend $\tau$ to $[0,\infty)$ as the right-continuous inverse of $F$ and put $\tau(\infty):=\lim_\infty\tau$. The process $F$ is then $\GG$-adapted, nonnegative, nondecreasing, continuous, vanishing at zero, strictly increasing while finite, but possibly reaching $\infty$ in finite time.  By \cite[Proposition~7.9]{kallenberg} $\tau=(\tau_t)_{t\in [0,\infty)}$ is a strictly increasing continuous family of $\GG$-stopping times, the filtration $\HH:=\GG_\tau$   
is right-continuous and for all $\GG$-stopping times $\rho$, $F(\rho)$ is an $\HH$-stopping time, in particular $\zeta$ ($=F(\eta\land\sigma)+\infty\mathbbm{1}_{\{F(\sigma)<F(\eta)\}}$ or $=F(\eta\land \sigma)$ according as to whether $\inf I\in I$ or not) is an $\HH$-stopping time and $X$ is $\HH$-adapted; further, still by (the proof of) the quoted result, for all $u\in [0,\infty)$, $F(u)$ is an $\HH$-stopping time  and 
$\HH_{F_t}\subset \GG_t$ on $\{F_t<\infty\}$ /hence everywhere (!), because $\{F_t=\infty\}=\{\tau_\infty\leq t\}$ and $\HH_\infty\subset \GG_{\tau_\infty}$/ for all $t\in [0,\infty)$ [one does not actually need $F$ to be strictly increasing to get this in \cite[proof of Proposition~7.9]{kallenberg}  just its continuity and the fact that it is strictly increasing while finite]. Besides, $\tau$ in turn is $\HH$-adapted, nonnegative, strictly increasing, continuous, vanishing at zero and $F$ is the right-continuous inverse of $\tau$. Therefore, by \cite[Proposition~7.9]{kallenberg} yet again, if $\rho$ is an $\HH$-stopping, then $\tau_\rho$ is an $\HH_F$-, hence a $\GG$-stopping time, and $\HH_\rho\subset (\HH_F)_{\tau_\rho}\subset \GG_{\tau_\rho}$ on $\{\tau_\rho<\infty\}$ [one must be ever so slightly careful in applying \cite{kallenberg} because there filtrations at stopping times are defined with the ``basic'' $\sigma$-field \cite[p.~120]{kallenberg}, $\GG_\infty$ here, hence the presence of the qualifier ``on $\{\tau_\rho<\infty\}$''] but in consequence of course also $\HH_\rho\subset  \GG_{\tau_\rho}$ just everywhere.

Let now  $(T_n)_{n\in \mathbb{N}}$ be a sequence of $\HH$-stopping times that is nondecreasing to a $T$ a.s. and pick an $x\in I$. Changing the times on a $\PP_x$-negligible set, which will be without loss of generality, we  assume the sequence $(T_n)_{n\in \mathbb{N}}$ is nondecreasing to $T$ (with certainty). 
 Then $(\tau_{T_n})_{n\in \mathbb{N}}$ is a sequence of $\GG$-stopping times that is nondecreasing to $\tau_T$, hence  by quasi left-continuity of $\xi$ in $\GG$, $\lim_{n\to\infty} \xi_{\tau_{T_n}}=\xi_{\tau_T}$ a.s.-$\PP_x$ on $\{\tau_T<\eta\}$.  	Therefore $\lim_{n\to\infty} X_{T_n}=X_{T}$ a.s.-$\PP_x$ on $\{T<F(\eta\land\sigma)\}$, hence on $\{T<\zeta\}$. Thus quasi left-continuity of $X$ in $\HH$ is established. Besides, because $\eta$ is completely inaccessible in $\GG$, $$\PP_x(\tau_{T_n}<\eta\text{ for all }n\in \mathbb{N},\lim_{n\to\infty}\tau_{T_n}=\eta<\infty)=0,$$ therefore $$\PP_x\left(\{T_n<\zeta\text{ for all }n\in \mathbb{N}\}\cap \left\{\lim_{n\to\infty}T_n =\zeta\right\}\cap \{\zeta<\infty,X_{\zeta-}<\infty\}\right)=0.$$ In this manner is inferred the non-announcability of $\zeta$ on $\{0<\zeta<\infty,X_{\zeta-}<\infty\}$ in $\HH$.

Proceeding further, take an $\HH$-stopping time $S$ and a measurable $H:D\to [0,\infty]$ on the space $D$ of c\`adl\`ag paths in $I$ with lifetime and $\inf I$ a trap (when $\inf I\in I$), $x\in I$.  Let $h$ be the measurable map from the space of c\`adl\`ag paths in $\mathbb{R}$ with lifetime and no negative jumps  to the space $D$ for which $X=h(\xi)$; this map has been explicated in the statement of the proposition. Then $\tau_S$ is a $\GG$-stopping time, hence $\PP_x[H(h(\xi_{\tau_S+\cdot}))\vert\GG_{\tau_S}]=\PP_{\xi_{\tau_S}}[H(h(\xi))]$ a.s.-$\PP_x$ on $\{\tau_S<\eta\}$ by the strong Markov property of $\xi$ in $\GG$. Therefore  $\PP_x[H(X_{S+\cdot})\vert\GG_{\tau_S}]=\PP_{X_S}[H(X)]$, a fortiori $\PP_x[H(X_{S+\cdot})\vert\HH_S]=\PP_{X_S}[H(X)]$ a.s.-$\PP_x$ on $\{S<F(\eta\land \sigma)\}$ and trivially on $\{F(\eta\land \sigma)\leq S<\zeta\}$. We have deduced the strong Markov property of $X$ in $\HH$.

To conclude the proof of \ref{fund:i} it remains to take into account Remark~\ref{remark:right-cts-filtration} in order to pass from $\HH$ to  $\FF=\overline{\FF^X_+}$.

\ref{fund:ii}. By It\^o's formula \cite[Theorem~II.5.1]{ikeda1989stochastic} and the L\'evy-It\^o decomposition of the sample paths of a L\'evy process  \cite[Theorem~2.1]{kyprianou} for all $n\in \mathbb{N}$ and $a\in I^\circ$ the process 
$$\left(f(\xi_{u\land \sigma_n^+\land \sigma_a^-})\mathbbm{1}_{\{u\land \sigma_n^+\land \sigma_a^-<\eta\}}-\int_0^{u\land \sigma_n^+\land \sigma_a^-}\AA^{\psi,p}f(\xi_v)\dd v\right)_{u\in [0,\infty)},$$
where $\sigma_a^-:=\inf\{u\in [0,\eta):\xi_{u-}\land \xi_u\leq a\}$ and $\sigma_n^+:=\inf\{u\in [0,\eta): \xi_u> n\}\land\eta$, is a martingale in $\GG$ that is a.s. bounded on each bounded deterministic interval. Time-changing by $\tau$ we infer via optional sampling and the elementary change of variables ``$v=\tau_s$, $s=F(v)$'' that the process
$$\left(f(X_{t\land T_a^-\land T_n})\mathbbm{1}_{\{t\land T_a^-\land T_n^+< \zeta\}}-\int_0^{t\land T_a^-\land T_n^+} A(X_s)\cdot \AA^{\psi,p}  f(X_s)\dd s\right)_{t\in [0,\infty)}$$ is a martingale in $\overline{\HH}$ and a fortiori in $\FF$. Here we use the completion $\overline{\HH}$ instead of $\HH=\GG_\tau$ to ensure that we may indeed pass to  $T_a^-$ from what would otherwise have to be the ``direct copy'' $\tilde T_a^-=\inf\{t\in [0,\zeta):X_{t-}\land X_t\leq a\}$ of $\sigma_a^-$ [initially we have that
$$\left(f(X_{t\land \tilde T_a^-\land T_n})\mathbbm{1}_{\{t\land \tilde T_a^-\land T_n^+< \zeta\}}-\int_0^{t\land \tilde T_a^-\land T_n^+} A(X_s)\cdot \AA^{\psi,p}  f(X_s)\dd s\right)_{t\in [0,\infty)}$$ is a martingale in $\HH$], while local boundedness of $A$ (and therefore of $A\cdot \AA^{\psi,p} f$) is used to justify the application of optional sampling to the above $\GG$-martingale over what are  not necessarily bounded stopping times. Once we take into account the discussion on p.~\pageref{page:discussion}, especially $(\bullet_{2b})$, we indeed get \eqref{A} for all $a\in I^\circ$, all $q\in [0,\infty)$, with the indicated $\AA f$, as desired.

\ref{time-changed:i}. Directly from the construction, taking into account \ref{fund:b} and the statement surrounding \eqref{C}, we  get that for $\lll\in I$ a.s. $\{T_\lll^-<\zeta\}=\{\sigma_\lll^-<\eta\}$. Thus the validity of \eqref{eq:positive-chances} and the fact that \eqref{eq:scale-identity} holds true with $\Phi_0=e^{-\psi^{-1}(p)\cdot }$ for $q=0$ are just a matter of the well-known result for the L\'evy process $\xi$  (see Example~\ref{example:levy-process} below).

\ref{time-changed:iii}. The first statement comes from the classical Esscher transform/exponential change of measure \cite[Eq.~(8.5)]{kyprianou}:
$$\QQ_x\vert_{\mathcal{T}_u}:=(e^{-\psi^{-1}(p)(\xi_u-\xi_0)}\mathbbm{1}_{\{u<\eta\}})\cdot\PP_x\vert_{\mathcal{T}_u},\quad x\in \mathbb{R},\, u\in [0,\infty),$$ where $\mathcal{T}=(\mathcal{T}_u)_{u\in [0,\infty)}$ is  the natural filtration of $\xi$ (we do not want any right-continuity or completeness here). 

In case $\inf I>-\infty$ we note that  \eqref{C} holds for the process $X$ iff it holds for the same process but with $\psi^\#$ the Laplace exponent of $\xi$. The reason for this is that the exponential change of measure gives rise to probabilities that are \emph{locally} equivalent up to lifetime, $$\QQ_x\vert_{\mathcal{T}_u\vert_{\{u<\eta\}}}\sim \PP_x\vert_{\mathcal{T}_u\vert_{\{u<\eta\}}}\text{ for all }u\in [0,\infty),\ x\in \mathbb{R}$$
 and \eqref{C} is something that is determined already by the negligible sets of the probabilities on restriction to  $\cup_{u\in [0,\infty)}\mathcal{T}_u\vert_{\{u<\eta\}}$. 
 
 For the second statement we  assume that  $\xi$ is defined on a sufficiently rich canonical space so that the local measures $\QQ_x$, $x\in I$, can be extended from the algebra $\cup_{u\in [0,\infty)}\mathcal{T}_u$ to the $\sigma$-field $\mathcal{T}_\infty$ as bona fide probabilities \cite[Section~V.4]{parthasarathy} and then completed. Since we are only interested in making a distributional inference this is assumed without loss of generality, and we do it only for convenience (we could instead just as well localize the hitting times and pass to the limit).  Under the new measures $\xi$ is a spectrally positive L\'evy process with Laplace exponent $\psi^\#$ (for emphasis, not killed: $\eta=\infty$ a.s.-$\QQ_x$ for all $x\in I$).  Furthermore, for $\lll\in I$, $T_\lll^-=F(\sigma_\lll^-)$ on $\{T_\lll^-<\zeta\}=\{\sigma_\lll^-<\eta\}$ a.s.-$\PP_x$ and a.s.-$\QQ_x$ for all $x\in I$. Therefore, for $\lll\leq x$ from $I$, using \cite[Corollary~3.11]{kyprianou},
\begin{align*}
\PP_x[e^{-q T_\lll^-};T_\lll^-<\zeta]&=\PP_x[e^{-qF(\sigma_\lll^-)};\sigma_\lll^-<\eta]\\
&=\QQ_x\left[e^{\psi^{-1}(p)(\xi_{\sigma_\lll^-}-\xi_0)}e^{-qF(\sigma_\lll^-)};\sigma_\lll^-<\infty\right]\\
&=e^{-\psi^{-1}(p)(x-\lll)}\QQ_x\left[e^{-qF(\sigma_\lll^-)};\sigma_\lll^-<\infty\right]\\
&=e^{-\psi^{-1}(p)(x-\lll)}\QQ_x[e^{-q T_\lll^-};T_\lll^-<\zeta],
\end{align*}
from which the claim is immediate.
\end{proof}

To conclude this section let us recall the maps $\Phi_q$, $q\in [0,\infty)$, for the classes of processes that we have indicated in  Subsection~\ref{subsection:structure}. The expressions may be found in the pieces of literature that we have already quoted (and will, for the reader's convenience, quote again more precisely presently). They also follow straightforwardly from Propositions~\ref{proposition:characterization-one} and~\ref{proposition:characterization-generator} and the apposite expressions for the generators, which are available in  explicit terms through Proposition~\ref{proposition:fund-class}\ref{fund:ii}, but we shall not belabour the reader with this. We continue to use the notation of \eqref{eq:notation}. 


First, ``plain vanilla'' -- stationary independent increments.

\begin{example}\label{example:levy-process}
In the context of Proposition~\ref{proposition:fund-class} take $I=\mathbb{R}$ and $A\equiv 1$. Thus $X=\xi$ is just a, possibly killed at rate $p$, spectrally positive L\'evy process under the probabilities $(\PP_x)_{x\in \mathbb{R}}$, Laplace exponent $\psi-p$. 
We have
\begin{equation}\label{given:levy}
\PP_x[e^{-qT_\lll^-};T_\lll^-<\zeta]=e^{-\psi^{-1}(p+q)(x-\lll)}
\end{equation}for $\lll\leq x$ from $\mathbb{R}$, $q\in [0,\infty)$. 
It is the well-known result \cite[p.~232, 1st display]{kyprianou} coming out of the exponential martingale of $X$ (which is the (local) martingale of  Proposition~\ref{proposition:characterization-one}\ref{equivalent:i}). 
\end{example}

Second, we touch on the world of self-similar processes.

\begin{example}\label{example:logs-ppsmp}
Fix further an $\alpha\in (0,\infty)$. In the context of Proposition~\ref{proposition:fund-class} take $I=\mathbb{R}$ and $A=e^{\alpha \cdot}$. 
Then $\exp(-X)$ under the probabilities $\QQ_y:=\PP_{-\log y}$, $y\in (0,\infty)$, is the  positive self-similar Markov process of the spectrally negative type with self-similarity index $ \alpha$ associated to $-\xi$  via the Lamperti transform (we view $0$ as a cemetery state for $\exp(-X)$); and positive self-similar Markov processes with $0$ absorbing and no positive jumps,  whose paths are not a.s. nonincreasing, are actually exhausted (in law) by this construction \cite[Section~13.3]{kyprianou}.  More directly, in terms of the process $X$ itself, the self-similarity property is the following: for each $c\in (0,\infty)$, $$\left((X_{c^\alpha t})_{t\in [0,c^{-\alpha}\zeta)}\right)_\star\PP_{x}=\left(X-\log c\right)_\star \PP_{x+\log c},\quad x\in \mathbb{R},$$ which is the form that is more convenient for our purposes (it fits in more nicely with the setting of Subsection~\ref{subsection:setting}). We have
\begin{equation}\label{given:ss}
\PP_x[e^{-qT_\lll^-};T_\lll^-<\zeta]=\frac{\sum_{k=0}^\infty\frac{q^k}{\prod_{l=1}^k\psi(\psi^{-1}(p)+l\alpha)-p}e^{-(\psi^{-1}(p)+\alpha k)x}}{\sum_{k=0}^\infty\frac{q^k}{\prod_{l=1}^k\psi(\psi^{-1}(p)+l\alpha)-p}e^{-(\psi^{-1}(p)+\alpha k)\lll}}
\end{equation}for $\lll\leq x$ from $\mathbb{R}$, $q\in[0,\infty)$ \cite[Theorem~13.10(ii)]{kyprianou}.
\end{example}

Third, branching in continuous space. Separate according to whether zero is hit with positive probability or not. We could amalgamate the two cases but it is cleaner to consider them individually.

\begin{example}\label{example:csbp}
Assume $\int^\infty_{\psi^{-1}(0)+1} \frac{1}{\psi}=\infty$ and take in the context of Proposition~\ref{proposition:fund-class}  $I=(0,\infty)$ and $A=\mathrm{id}_{(0,\infty)}$. Then $X$ is a continuous-state branching process with branching mechanism $\psi-p$ under the probabilities $(\PP_x)_{x\in (0,\infty)}$; and by the (another) Lamperti transform continuous-state branching process that do not hit zero from positive levels and whose paths are not a.s. nondecreasing are actually exhausted (in law) by this construction  \cite[Section~12.1]{kyprianou}. We have
\begin{equation}\label{given:branching}
\PP_x[e^{-qT_\lll^-};T_\lll^-<\zeta]=\frac{\int_{\psi^{-1}(p)}^\infty \frac{\dd z}{\psi(z)-p}\exp\left(-xz+\int_{\theta}^z\frac{q}{\psi-p}\right)}{\int_{\psi^{-1}(p)}^\infty \frac{\dd z}{\psi(z)-p}\exp\left(-\lll z+\int_{\theta}^z\frac{q}{\psi-p}\right)}=\frac{x\int_{\psi^{-1}(p)}^\infty \exp\left(-xz+\int_{\theta}^z\frac{q}{\psi-p}\right)\dd z}{\lll\int_{\psi^{-1}(p)}^\infty \exp\left(-\lll z+\int_{\theta}^z\frac{q}{\psi-p}\right)\dd z},
\end{equation}
for  $\lll\leq x$ from $(0,\infty)$, $q\in (0,\infty)$ [the second expression also for $q=0$], with $\theta\in (\psi^{-1}(p),\infty)$ arbitrary \cite[Theorem~1]{ma}.
\end{example}

\begin{example}\label{example:csbp-2}
Assume $\int^\infty_{\psi^{-1}(0)+1} \frac{1}{\psi}<\infty$ and take in the context of Proposition~\ref{proposition:fund-class}  $I=[0,\infty)$ and $A=\mathrm{id}_{(0,\infty)}$. Then (again) $X$ is a continuous-state branching process with branching mechanism $\psi-p$ under the probabilities $(\PP_x)_{x\in [0,\infty)}$; and by the Lamperti transform continuous-state branching process that hit zero with positive probability and whose paths are not a.s. nondecreasing  are  exhausted (in law) by this construction  \cite[Section~12.1]{kyprianou}. We have
\begin{equation}\label{given:branching-2}
\PP_x[e^{-qT_\lll^-};T_\lll^-<\zeta]=\frac{\int_{\psi^{-1}(p)}^\infty \frac{\dd z}{\psi(z)-p}\exp\left(-xz-\int_z^\infty\frac{q}{\psi-p}\right)}{\int_{\psi^{-1}(p)}^\infty \frac{\dd z}{\psi(z)-p}\exp\left(-\lll z-\int_z^\infty\frac{q}{\psi-p}\right)},
\end{equation}
for  $\lll\leq x$ from $[0,\infty)$, $q\in (0,\infty)$ [for $q=0$ one has the same expression as in the previous example] \cite[Theorem~1]{ma}.
\end{example}

As a check the reader may find it worthwhile to compare the expressions of the preceding examples against the validity of Items~\ref{time-changed:i} and~\ref{time-changed:iii} of Proposition~\ref{proposition:fund-class}.  Other examples of processes having (reasonably) explicit maps $\Phi_q$, $q\in [0,\infty)$, falling under Proposition~\ref{proposition:fund-class}, follow from \cite[Subsection~1.2, Theorem~3.1(ii)]{vidmar_2019}.

\section{First passage time characterizations of processes}\label{section:characterizations}

Now for the promised characterizations of the law of $X$ in terms of the laws of its first passage times. Though we did not list it in Subsection~\ref{subsection:structure} we shall do it also for general killed drifts, in Proposition~\ref{proposition:killed-drifts}.

\subsection{L\'evy world} We begin with the simplest of all processes (in the present context), those with stationary independent increments.
 
 \begin{theorem}\label{theorem:levy-character}
Let $I=\mathbb{R}$. Suppose there exist a $Q\in [0,\infty)$ and a map $\phi:(Q,\infty)\to (0,\infty)$, such that 
 \begin{equation}\label{eq:dirac-levy}
\PP_x[e^{-q T_\lll^-};T_\lll^-<\zeta]=e^{-\phi(q)(x-\lll)}\text{ for all }\lll\leq x\text{ from }\mathbb{R},\text { all }q\in (Q,\infty).
\end{equation}
Then $X$ is a pk-spLp in [i.e. adapted to, with independent increments relative to] the filtration $\FF$ under the probabilities $(\PP_x)_{x\in \mathbb{R}}$, the right-continuous inverse of the Laplace exponent of which restricts to $\phi$ on $(Q,\infty)$. Also, the law of $X$ is uniquely determined by \eqref{eq:dirac-levy}.
 \end{theorem}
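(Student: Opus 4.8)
The plan is to read the hypothesis \eqref{eq:dirac-levy} as the statement that, for each $q\in(Q,\infty)$, the continuous map $\Phi_q(x):=e^{-\phi(q)x}$ is a scale function, i.e. satisfies \eqref{eq:scale-identity}. First I would record the analytic nature of $\phi$. For fixed $\lll<x$ the left-hand side of \eqref{eq:dirac-levy} is continuous and strictly decreasing in $q$ (dominated convergence, together with $T_\lll^->0$ a.s.-$\PP_x$, which holds because $X$ is normal and right-continuous and starts strictly above $\lll$), and it tends to $0$ as $q\to\infty$; hence $\phi$ is continuous, strictly increasing, and $\phi(q)\to\infty$. Thus $\phi$ is a homeomorphism of $(Q,\infty)$ onto an open interval $(\lambda_0,\infty)$ with $\lambda_0:=\lim_{q\downarrow Q}\phi(q)\in[0,\infty)$; write $q(\cdot):=\phi^{-1}$.

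For $\lambda\in(\lambda_0,\infty)$ set $q:=q(\lambda)\in(Q,\infty)$ and $N^\lambda_t:=e^{-\lambda X_t-qt}\mathbbm{1}_{\{t<\zeta\}}$. Since $\Phi_q$ is continuous and, by \eqref{eq:dirac-levy}, verifies statement \ref{equivalent:vi} of Proposition~\ref{proposition:characterization-one} with $q>0$, that proposition gives that the stopped process $(N^\lambda_{t\wedge T_\lll^-})_{t}$ is a martingale with terminal value $e^{-\lambda\lll}e^{-qT_\lll^-}\mathbbm{1}_{\{T_\lll^-<\zeta\}}$ (statement \ref{equivalent:iii}) and that $N^\lambda$ is a local martingale (statement \ref{equivalent:i}, as $\inf I=-\infty\notin I$ since $I=\RR$); evaluating the stopped martingale at time $0$ (normality, $\zeta>0$ a.s.) yields $\PP_x[N^\lambda_{t\wedge T_\lll^-}]=e^{-\lambda x}$ for all $t\geq0$ and $\lll\leq x$.

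The main obstacle is to upgrade $N^\lambda$ from a nonnegative local martingale (a priori only a supermartingale) to a genuine martingale, i.e. to prove the spatial homogeneity $\PP_x[e^{-\lambda X_t}\mathbbm{1}_{\{t<\zeta\}}]=e^{-\lambda x+qt}$, ruling out loss of mass. I would let $\lll\downarrow-\infty$ in the stopping identity. Continuity at the minimum ($X_{T_\lll^-}=\lll$) splits it as
\[
e^{-\lambda x}=\PP_x\big[e^{-\lambda X_t-qt}\mathbbm{1}_{\{t<\zeta\}};\,t<T_\lll^-\big]+e^{-\lambda\lll}\,\PP_x\big[e^{-qT_\lll^-};\,T_\lll^-\leq t\big].
\]
Because the left limits of $X$ lie in $\RR\cup\{\infty\}$, the running infimum $\underline{X}$ does not reach $-\infty$ in finite time, so $T_\lll^-\uparrow\infty$ a.s. and monotone convergence makes the first term increase to $e^{-qt}\PP_x[e^{-\lambda X_t}\mathbbm{1}_{\{t<\zeta\}}]$. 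For the second term $A_\lll:=e^{-\lambda\lll}\PP_x[e^{-qT_\lll^-};T_\lll^-\leq t]$ I would exploit that an entire half-line of exponents is available: choosing $\lambda'>\lambda$ and $q':=q(\lambda')>q$, on $\{T_\lll^-\leq t\}$ one has $e^{-qT_\lll^-}\leq e^{(q'-q)t}e^{-q'T_\lll^-}$, so by \eqref{eq:dirac-levy} applied with $\lambda'$,
\[
A_\lll\leq e^{(q'-q)t}\,e^{-\lambda\lll}\,\PP_x[e^{-q'T_\lll^-};T_\lll^-<\zeta]=e^{(q'-q)t}\,e^{-\lambda'x}\,e^{(\lambda'-\lambda)\lll}\longrightarrow0\quad(\lll\to-\infty),
\]
since $\lambda'-\lambda>0$. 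This delivers the desired identity for every $\lambda\in(\lambda_0,\infty)$.

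Finally I would invoke the Markov property: on $\{t<\zeta\}$,
\[
\PP_x\big[e^{-\lambda(X_{t+s}-X_t)}\mathbbm{1}_{\{t+s<\zeta\}}\,\big|\,\FF_t\big]=e^{q(\lambda)s}\qquad(\lambda\in(\lambda_0,\infty)),
\]
a deterministic quantity. As $(\lambda_0,\infty)$ is an interval, uniqueness of Laplace transforms of finite measures shows that the conditional sub-probability law of the increment on the survival event is a fixed measure $\mu_s$ with $\widehat{\mu_s}(\lambda)=e^{q(\lambda)s}$, independent of $\FF_t$ and of $x$; hence $X$ has $\FF$-stationary independent increments up to $\zeta$. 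From $\widehat{\mu_{s+s'}}=\widehat{\mu_s}\,\widehat{\mu_{s'}}$ one gets $\mu_{s+s'}=\mu_s*\mu_{s'}$; the total masses $m_s:=\mu_s(\RR)$ are multiplicative, right-continuous, with $m_0=1$ and $m_s>0$ (because $\zeta>0$ a.s.), so $m_s=e^{-ps}$ for a unique $p\in[0,\infty)$, which identifies the killing as an independent exponential clock of rate $p$. The normalised probabilities $\nu_s:=e^{ps}\mu_s$ then form a convolution semigroup, i.e. the increment laws of a Lévy process $\xi$ with $\widehat{\nu_s}(\lambda)=e^{s\psi(\lambda)}$ and $\psi(\lambda)=q(\lambda)+p$; the absence of negative jumps makes $\xi$ spectrally positive. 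Thus $X$ equals $\xi$ killed at rate $p$ — an $\FF$-adapted, $\FF$-independent-increment, possibly killed spectrally positive Lévy process — and since $(\psi-p)(\lambda)=q(\lambda)=\phi^{-1}(\lambda)$ its Laplace exponent $\psi-p$ has right-continuous inverse coinciding with $\phi$ on $(Q,\infty)$ (in agreement with \eqref{given:levy} of Example~\ref{example:levy-process}); in particular the characteristics $(\gamma,\sigma,\mm,p)$, hence the entire law of $X$, are determined by $\phi$.
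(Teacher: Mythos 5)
Your proof is correct, and its skeleton matches the paper's: read \eqref{eq:dirac-levy} as statement \ref{equivalent:vi} of Proposition~\ref{proposition:characterization-one} to obtain the exponential stopped martingales, remove the stopping at $T_\lll^-$ by letting $\lll\downarrow-\infty$, and conclude stationary independent increments in $\FF$ from the resulting deterministic conditional Laplace transform. The genuine divergence is in how the limit $\lll\downarrow-\infty$ is justified. The paper first proves an integrability estimate --- observing that \eqref{eq:dirac-levy} forces $(x-\underline{X}_{(e_q\land \zeta)-})_\star \PP_x=\mathrm{Exp}(\phi(q))$ for an independent exponential time $e_q$, whence $-\underline{X}_{(t\land\zeta)-}$ has finite exponential moments of every order --- and then passes to the limit by dominated convergence directly inside the \emph{conditional} martingale identity, so the Markov property is not invoked again. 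You instead work with unconditional expectations: monotone convergence handles the surviving term with no domination needed, and the term at $T_\lll^-$ is killed by the comparison $e^{-qT_\lll^-}\le e^{(q'-q)t}e^{-q'T_\lll^-}$ on $\{T_\lll^-\le t\}$ with a strictly larger exponent $\lambda'=\phi(q')$, again supplied by \eqref{eq:dirac-levy}; you then reinstate conditioning via the Markov property at deterministic times. Your route buys self-containedness (no moment estimate for the running infimum is ever needed), whereas the paper's estimate is the same device that reappears in the proofs of Theorems~\ref{theorem:self-similar} and~\ref{theorem:csbp}, so the paper's choice sets up those later arguments; you also spell out the endgame (deterministic conditional law $\Rightarrow$ sub-probability convolution semigroup, extraction of the killing rate $e^{-ps}$, identification of the Laplace exponent with $p+\phi^{-1}$ and of its right-continuous inverse with $\phi$), which the paper compresses into ``the claim now follows at once.'' Only routine points are glossed in your write-up: the a.s.-simultaneity over a countable dense set of $\lambda$'s, together with convexity/continuity of Laplace transforms, needed to conclude that the conditional law is a single deterministic measure $\mu_s$; the weak continuity $\nu_s\to\delta_0$ as $s\downarrow 0$ (from right-continuity of paths at $0$), needed so that $(\nu_s)_{s\ge 0}$ is indeed the convolution semigroup of a L\'evy process; and the exclusion of subordinators under the paper's convention, which is immediate since \eqref{eq:dirac-levy} gives $\PP_x(T_\lll^-<\zeta)>0$ for $\lll<x$. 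None of these is a gap.
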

 Of course a $Q$ (namely, $0$) and a $\phi$ (namely, $\psi^{-1}(p+\cdot)\vert_{(0,\infty)}$) exist for each pk-spLp of Example~\ref{example:levy-process}, so this is a characterization of this class through the nature of the laws of its first passage times downwards. A consequence of the theorem is that if the laws of the first passage times downwards of $X$ coincide with those of a pk-spLp, then the law of $X$ is that of this pk-spLp. But the assertion of Theorem~\ref{theorem:levy-character} is stronger than that: $\phi$ being a restriction of the right-continuous inverse of the Laplace exponent of some pk-spLp is  inferred, not assumed! The case when $\phi$ is affine is elementary, corresponding to a possibly killed negative drift. The main idea of the proof is to use the martingales of Proposition~\ref{proposition:characterization-one}  to get the property of stationary independent increments  at the level of the Laplace transforms.
 \begin{proof}
By Theorem~\ref{theorem:skip-free} we may and do assume $X$ has no negative jumps a.s.. 

Fix $x\in \mathbb{R}$. Letting $q\to\infty$ in \eqref{eq:dirac-levy} [with any $\lll\in (-\infty,x)$] we get by bounded convergence that $\lim_\infty\phi=\infty$. Besides, we see that $\phi$ is continuous. 

Consider next the running infimum process $\underline{X}$ of $X$. \eqref{eq:dirac-levy} means that for $q\in (Q,\infty)$, $(x-\underline{X}_{(e_q\land \zeta)-})_\star \PP_x=\mathrm{Exp}(\phi(q))$, where $e_q$ is an exponential random time of rate $q$ independent of $X$ to which we grant ourselves access (as we may). Let $\lambda\in [0,\infty)$ and $x\in \mathbb{R}$. Then 
$$qe^{\lambda x}\int_0^\infty \PP_x[e^{-\lambda \underline{X}_{(t\land \zeta)-}}]e^{-qt}\dd t=\PP_x[e^{\lambda (x-\underline{X}_{(e_q\land \zeta)-})}]$$
is finite for all $q\in \phi^{-1}((\lambda,\infty))$; picking one such $q$ we get that $-\underline{X}_{(t\land \zeta)-}$ admits a finite $\PP_x$-exponential moment of order $\lambda$  for each fixed $t\in [0,\infty)$. 

By Proposition~\ref{proposition:characterization-one} we have for all real $t\geq s\geq 0$,
$$\PP_x\left[e^{-\phi(q)X(t\land T_\lll^-)-q(t\land T_\lll^-)}\mathbbm{1}_{\{t\land T_\lll^-<\zeta\}}\vert \FF_s\right]=e^{-\phi(q)X(s\land T_\lll^-)-q(s\land T_\lll^-)}\mathbbm{1}_{\{s\land T_\lll^-<\zeta\}}$$ a.s.-$\PP_x$ for all $\lll\leq x$ from $\mathbb{R}$ and $q\in (Q,\infty)$. Letting $\lll\downarrow -\infty$ we deduce by dominated convergence (using the integrability observation procured just above) that 
$$\PP_x\left[e^{-\phi(q)X(t)-qt}\mathbbm{1}_{\{t<\zeta\}}\vert \FF_s\right]=e^{-\phi(q)X(s)-qs}\mathbbm{1}_{\{s<\zeta\}},$$ i.e. 
\begin{equation}\label{thm:levy-proof}
\PP_x\left[e^{-\phi(q)(X(t)-X(s))}\mathbbm{1}_{\{t<\zeta\}}\vert \FF_s\right]=e^{q(t-s)}\mathbbm{1}_{\{s<\zeta\}}
\end{equation}
 a.s.-$\PP_x$. Taking $\PP_x$-expectation of this with $s=0$, $x=0$ gives $\PP_0[e^{-\phi(q)X(t)};t<\zeta]=e^{qt}$ and plugging it back in renders 
$$\PP_x\left[e^{-\phi(q)(X(t)-X(s))}\mathbbm{1}_{\{t<\zeta\}}\vert \FF_s\right]=\PP_0[e^{-\phi(q)X(t-s)};t-s<\zeta]\mathbbm{1}_{\{s<\zeta\}}$$ a.s.-$\PP_x$. Therefore, for all $A\in \FF_s$ and $\lambda\in \phi((Q,\infty))$, 
$$\PP_x\left[e^{-\lambda(X(t)-X(s))};t<\zeta,A\right]=\PP_0[e^{-\lambda X(t-s)};t-s<\zeta]\PP_x(s<\zeta,A),$$
which we extend by analyticity to $\lambda\in \{\Re>0\}$ and then by continuity to $\lambda\in \{\Re\geq 0\}$, to  $\lambda\in \{\Re=0\}=\ii\mathbb{R}$ in particular. By functional monotone class we infer from this that for all bounded Borel $f:\mathbb{R}\to \mathbb{R}$, 
\begin{align*}
\PP_x\left[f(X(t)-X(s));t<\zeta,A\right]&=\PP_0[f(X(t-s));t-s<\zeta]\PP_x(s<\zeta,A)\\
&=\PP_x\left[f(X(t)-X(s))\mathbbm{1}_{\{t<\zeta\}}\vert s<\zeta\right]\PP_x(s<\zeta,A),
\end{align*}
which means that on $\{s<\zeta\}$, under $\PP_x$, $X(t)-X(s)$ is independent of $\FF_s$ and has the same distribution as $X(t-s)$ under $\PP_0$ (we interpret $\infty-r:=\infty$ no matter what the $r\in \mathbb{R}$). But this is to say precisely that $X$ is a possibly killed L\'evy process in $\FF$ under the probabilities $(\PP_x)_{x\in \mathbb{R}}$. Clearly, from \eqref{eq:dirac-levy}, $X$ cannot be a subordinator. Since   $\PP_0[e^{-\phi(q)X(t)}]=e^{qt}$, it is also true that the right-continuous inverse of the Laplace exponent of $X$ restricts to $\phi$ on $(Q,\infty)$.

The second assertion of the theorem is seen  via analyticity of the Laplace exponent of $X$ on $\{\Re>0\}$, continuity on $\{\Re\geq 0\}$: $\phi$ fixes the values of this Laplace exponent on a set with an accumulation point, hence everywhere (and the Laplace exponent determines the law uniquely).
 \end{proof}
As a consequence we get that the property of the additive and regenerative process $T^-$ having stationary increments --- in the sense that  $(T_\lll^-)_\star \PP_x=(T_{\lll-x}^-)_\star {\PP_0}$ for all $\lll\leq x$ from $\mathbb{R}=I$  --- (we will say: of $T^-$ being a subordinator) is characteristic of L\'evy processes.
 \begin{corollary}
 Let $I=\mathbb{R}$. The following are equivalent. 
 \begin{enumerate}[(i)]
 \item\label{levy:i} $T^-$ is  a subordinator and \eqref{eq:positive-chances} holds true. 
 \item\label{levy:ii} $X$ is a pk-spLp in $\FF$ under the probabilities $(\PP_x)_{x\in \mathbb{R}}$.
 \end{enumerate}
 \end{corollary}
 \begin{question}
 Which (laws of) subordinators are got in this way from pk-spLp? Not all: for one, in order for the subordinator $T^-$ to have a positive killing rate the process $X$ must itself  be killed at a positive rate or be drifting to $\infty$, but in either case the Laplace exponent $\phi$ of $T^-$ (notation as in Theorem~\ref{theorem:levy-character}, $Q=0$) will have $\phi'(0+)<\infty$  (which in general of course need not be the case); furthermore, by the temporal Wiener-Hopf factorization \cite[p.~166, Eq.~(3) \& p.~191, Eq.~(2)]{bertoin} $\phi$ must be a special Bernstein function  \cite[Definition~11.1]{bernstein}. 
 
 \noindent More generally, when there is a.s. absence of negative jumps, keeping in mind Theorem~\ref{theorem:skip-free}, what are the possible laws of the additive and regenerative process $T^-$?
 \end{question}
 \begin{proof}
By Theorem~\ref{theorem:skip-free} we may and do assume $X$ has no negative jumps a.s..  Clearly \ref{levy:ii} implies \ref{levy:i} via the spatial homogeneity of L\'evy processes (and, for \eqref{eq:positive-chances}, because we are excluding subordinators). To see it the other way around, fix $q\in (0,\infty)$, put $$l_q(\lll):=-\log \PP_0[e^{-q T_{-\lll}^-};T_{-\lll}^-<\zeta],\quad \lll\in [0,\infty),$$ and note that $l_q:[0,\infty)\to [0,\infty)$ (finite-valued due to \eqref{eq:positive-chances}) is strictly increasing, additive by stationary increments of $T^-$, left-continuous in virtue of \eqref{eq:right-ct}. By the theory of Cauchy's functional equation it follows that there is a $\phi(q)\in (0,\infty)$ such that $l_q(\lll)=\phi(q)\lll$, $\lll\in [0,\infty)$. Thus (by stationary increments of $T^-$ again) we get  \eqref{eq:dirac-levy} with $Q=0$. It remains to apply Theorem~\ref{theorem:levy-character}.
 \end{proof}
 \subsection{Self-similar processes}
 The  results of the previous subsection are perhaps not really that unexpected. That we may invert also Example~\ref{example:logs-ppsmp} is considerably less obvious to the naked eye.

 \begin{theorem}\label{theorem:self-similar}
Let $I=\mathbb{R}$. Suppose there exist a $Q\in [0,\infty)$, an $\alpha\in (0,\infty)$, a $z_0\in [0,\infty)$,  and a sequence $(a_k)_{k\in \mathbb{N}_0}$ in $(0,\infty)$ with $a_0=1$,
 $$\liminf_{k\to\infty}\frac{a_{k}k^2}{a_{k-1}}>0\text{ and }\limsup_{k\to\infty}\frac{ka_k}{a_{k-1}}<\infty$$
(so that in particular $\sum_{k\in \mathbb{N}_0}a_ku^k<\infty$ for all $u\in [0,\infty)$), such that 
 \begin{equation}\label{eq:discrete-self-similar}
\PP_x[e^{-q T_\lll^-};T_\lll^-<\zeta]=\frac{\sum_{k\in \mathbb{N}_0}a_kq^ke^{-(z_0+\alpha k)x}}{\sum_{k\in \mathbb{N}_0}a_kq^ke^{-(z_0+\alpha k)\lll}}\text{ for all }\lll\leq x\text{ from }\mathbb{R},\text { all }q\in (Q,\infty). 
\end{equation}  
Then, for some $\psi$ and $p$ (and the given $\alpha$), $X$ has the law of the process of Example~\ref{example:logs-ppsmp}; necessarily, $z_0=\psi^{-1}(p)$ and 
$$a_k=\left(\prod_{l=1}^k(\psi(\psi^{-1}(p)+\alpha l)-p)\right)^{-1},\quad k\in \mathbb{N}_0.$$  Also, the law of $X$ is uniquely determined by \eqref{eq:discrete-self-similar}. 
 \end{theorem}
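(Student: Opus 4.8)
The plan is to turn the data \eqref{eq:discrete-self-similar} into a closed, triangular system of linear ordinary differential equations for the ``lattice moments'' $\PP_x[e^{-(z_0+\alpha k)X_t};t<\zeta]$, and then to recover the one-dimensional laws of $X$ from these moments by a determinacy (Carleman) argument whose validity is guaranteed exactly by the growth hypotheses on $(a_k)$. Write $\Phi_q(x):=\sum_{k\in\mathbb{N}_0}a_kq^ke^{-(z_0+\alpha k)x}$, which the stated conditions render entire in $q$, smooth and, for $q>0$, strictly decreasing in $x$; since \eqref{eq:discrete-self-similar} is precisely \eqref{eq:scale-identity} for $\Phi_q$ (with $q\ge Q$), Proposition~\ref{proposition:characterization-one}\ref{equivalent:iii} gives genuine martingales $\Phi_q(X_{t\land T_\lll^-})e^{-q(t\land T_\lll^-)}\mathbbm{1}_{\{t\land T_\lll^-<\zeta\}}$ and Proposition~\ref{proposition:characterization-one}\ref{equivalent:i} the local martingale $(\Phi_q(X_t)e^{-qt}\mathbbm{1}_{\{t<\zeta\}})_t$ (here $\inf I=-\infty\notin I=\mathbb{R}$). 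First I would expand in powers of $q$ and, using that the martingale property holds for the whole interval $q\ge Q$, match coefficients of $q^k$ and reorganise (by integration by parts, inducting on $k$) to peel off, for each $k\in\mathbb{N}$, the fact that $N^{(k)}_t-\beta_k\int_0^tN^{(k-1)}_s\,\dd s$ is a local martingale, where $N^{(k)}_t:=e^{-(z_0+\alpha k)X_t}\mathbbm{1}_{\{t<\zeta\}}$ and $\beta_k:=a_{k-1}/a_k$ (the $q^0$ term gives that $N^{(0)}$ is itself a martingale). The governing cancellation is $e^{\alpha X_s}N^{(k)}_s=N^{(k-1)}_s$, which is exactly why the self-similar clock $\int_0^\cdot e^{\alpha X}$ collapses onto ordinary time after summation.

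Taking $\PP_x$-expectations (justified by stopping at $T_\lll^-$, letting $\lll\downarrow-\infty$, and using positivity of the $N^{(k)}$) converts these into the triangular system $f_k(t)=e^{-(z_0+\alpha k)x}+\beta_k\int_0^tf_{k-1}(s)\,\dd s$, with $f_k(t):=\PP_x[e^{-(z_0+\alpha k)X_t};t<\zeta]$ and $f_0\equiv e^{-z_0x}$. Solving recursively produces each $f_k$ as an explicit polynomial in $t$, so all lattice moments are fixed by $(z_0,\alpha,(a_k))$ alone. Now $f_k(t)=\int_{(0,\infty)}v^{k}\,\rho^{x}_{t}(\dd v)$ is the $k$-th moment of the sub-probability measure $\rho^x_t(\dd v):=v^{z_0/\alpha}\,(\text{law of }e^{-\alpha X_t}\text{ on }\{t<\zeta\})$ on $(0,\infty)$; the upper bound $\beta_k=O(k^2)$ supplied by $\liminf_k a_kk^2/a_{k-1}>0$ gives $f_k(t)\lesssim (Ct)^kk!$, whence $\sum_k f_k(t)^{-1/(2k)}=\infty$ and the Stieltjes moment problem for $\rho^x_t$ is determinate. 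Thus every transition sub-kernel $\PP_x[\,\cdot\,;t<\zeta]$ is pinned down, and by the Markov property so are all finite-dimensional distributions: the law of $X$ is uniquely determined by \eqref{eq:discrete-self-similar}.

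It remains to exhibit $\psi$ and $p$ for which the self-similar process $\tilde X$ of Example~\ref{example:logs-ppsmp} reproduces the data; by the uniqueness just established it will then follow that $X\stackrel{\mathrm{law}}{=}\tilde X$. Here I would reconstruct a genuine spectrally positive Laplace exponent from the prescribed lattice values $\psi(z_0+\alpha k)-p=\beta_k$ (with $\beta_0=0$ forcing $z_0=\psi^{-1}(p)$): third-order finite differences of step $\alpha$ annihilate the drift and Gaussian part and send the L\'evy part to $\int_{(0,\infty)}e^{-(z_0+\alpha k)h}(e^{-\alpha h}-1)^3\,\mathsf{m}(\dd h)=\int_{(0,1)}u^{k}\,\nu(\dd u)$, the moments on $(0,1)$ of the image of $(e^{-\alpha h}-1)^3e^{-z_0h}\mathsf{m}(\dd h)$ under $u=e^{-\alpha h}$; the determinate Hausdorff moment problem then recovers $\nu$, hence $\mathsf{m}$, after which $\sigma^2,\gamma,p$ are read off from finitely many further lattice values, the two growth conditions guaranteeing a bona fide exponent with $\mathsf{m}^h[h^2\land1]<\infty$ and $z_0=\psi^{-1}(p)\ge0$. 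As \eqref{given:ss} for $\tilde X$ returns the same $(\beta_k)$, Example~\ref{example:logs-ppsmp} forces $a_k=\bigl(\prod_{l=1}^k(\psi(\psi^{-1}(p)+\alpha l)-p)\bigr)^{-1}$, and comparison with the preceding paragraph gives $X\stackrel{\mathrm{law}}{=}\tilde X$.

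The main obstacle is the confluence of the two moment problems. On the one hand one must prove that the lattice moments genuinely determine the one-dimensional laws — the Carleman step, where the hypothesis $\beta_k=O(k^2)$ is precisely the borderline growth admissible for a spectrally positive exponent ($\psi(z)=O(z^2)$); on the other hand one must check that the Hausdorff reconstruction returns an admissible $\psi$, which is where the complementary bound $\beta_k\gtrsim k$ (from $\limsup_k ka_k/a_{k-1}<\infty$) is needed. The interchange-of-limits and integrability issues in the power-series extraction and in the passage $\lll\downarrow-\infty$ will also require care. Conceptually, this is where the ``surprising'' recovery of the upward dynamics from purely downward first-passage information is effected: it is the closure of the triangular moment system, rather than any finer path information, that does the work.
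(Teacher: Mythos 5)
Your first half, the uniqueness part, runs closer to the paper than it may appear: the triangular system you derive is, after the paper's exponential change of measure, exactly its moment identity \eqref{eq:self-similar-moment}, and the paper too obtains it by expanding the stopped martingales of Proposition~\ref{proposition:characterization-one} in powers of $q$ and equating coefficients. Two caveats. First, your justification of the passage $\lll\downarrow-\infty$ (``positivity of the $N^{(k)}$'') does not work: Fatou then yields only the inequality $f_k(t)\le e^{-(z_0+\alpha k)x}+\beta_k\int_0^t f_{k-1}(s)\,\dd s$, because mass may a priori escape through $X_{t\land T_\lll^-}\to-\infty$. Equality needs domination of $e^{-(z_0+\alpha k)X_{t\land T_\lll^-}}$, uniformly in $\lll$, by an integrable random variable, i.e.\ finiteness of all exponential moments of $-\underline{X}_{(t\land\zeta)-}$. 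The paper proves precisely this from \eqref{eq:discrete-self-similar} (the computation with an independent exponential time $e_q$) before doing anything else; this step is repairable by importing that computation, but it is not optional. Second, your Carleman/Stieltjes determinacy argument is a legitimate alternative to the paper's exponential-generating-function estimate, and it does pin down the transition subkernels, hence by the Markov property the law of $X$ --- but observe that this establishes only the final clause of the theorem (uniqueness of the law), not its main assertion.

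The genuine gap is the existence half. Having shown that at most one law is compatible with the data, you must still produce $(\psi,p)$ for which the process of Example~\ref{example:logs-ppsmp} realizes that data, i.e.\ for which $\beta_k=a_{k-1}/a_k=\psi(z_0+\alpha k)-p$ for all $k$. Your proposed reconstruction (third finite differences of step $\alpha$, then the Hausdorff moment problem) presupposes that the sequence $k\mapsto-\Delta_\alpha^3[\beta]_k$ is completely monotone, i.e.\ that it \emph{is} a Hausdorff moment sequence; but that is exactly what needs to be proved, and it is not a consequence of the two growth conditions --- a generic sequence with $k\lesssim\beta_k\lesssim k^2$ is not of the form $\psi(z_0+\alpha\,\cdot\,)-p$ for any admissible $\psi$, so the claim that ``the two growth conditions guarantee a bona fide exponent'' is false. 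What your reconstruction actually proves is \emph{uniqueness} of $(\psi,p)$ given that it exists; that is the content of the paper's lemma on determination of $\psi$ by its values on the lattice $\alpha\mathbb{N}$, which the paper uses only for the final uniqueness clause. The only available source of the required complete monotonicity is the process $X$ itself, and this is where the paper's structural detour is unavoidable in some form: it tilts by the mean-one martingale $e^{-z_0(X_t-X_0)}\mathbbm{1}_{\{t<\zeta\}}$, uses the moment identity \eqref{eq:self-similar-moment} together with moment determinacy to show that $e^{-X}$ under the tilted measures is a positive self-similar Markov process, and then invokes the Lamperti transform to manufacture a spectrally positive L\'evy process $\xi$ out of $X$ whose Laplace exponent supplies $\psi$ and $p$ (and, by comparison with \eqref{given:ss}, the stated formula for $a_k$). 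Your proposal has no counterpart of this step, so the main assertion --- that $X$ has the law of a process from Example~\ref{example:logs-ppsmp} --- is not reached.
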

 For sure the process $X$ of Example~\ref{example:logs-ppsmp} is such that the associated $Q$, $\alpha$, $z_0$ and  $(a_k)_{k\in \mathbb{N}_0}$ exist for it. The case when $(\exists d\in (0,\infty):\forall l\in \mathbb{N}:a_l^{-1}=l!d^l)$ corresponds to $X$ being the Lamperti time-change associated to $\xi=$ a possibly killed negative drift. 
  \begin{proof}
 According to the $\liminf$-$\limsup$ assumption $\frac{c^k}{k!^2}\leq a_k\leq \frac{C^k}{ k!}$ for all $k\in \mathbb{N}_0$ for some $\{c,C\}\subset (0,\infty)$. In particular $\sum_{k\in \mathbb{N}_0}a_ku^k<\infty$ for all $u\in [0,\infty)$. Besides, by analytic extension and continuity \eqref{eq:discrete-self-similar} prevails for all $q\in [0,\infty)$.
 
 By Theorem~\ref{theorem:skip-free} we may and do assume $X$ has no negative jumps a.s..

  Consider next the running infimum process $\underline{X}$ of $X$. Let $\lambda\in (0,\infty)$ and $x\in \mathbb{R}$. Then for all $q\in (0,\infty)$, for $e_q\sim\mathrm{Exp}(q)$ independent of $X$ (to which we may grant ourselves access),
\begin{align*}
q\int_0^\infty \PP_x[e^{-\lambda \underline{X}_{(t\land \zeta)-}}]e^{-qt}\dd t&=\PP_x[e^{-\lambda \underline{X}_{(e_q\land \zeta)-}}]=\int_0^\infty \PP_x(-\underline{X}_{(e_q\land \zeta)-}> \log(h)/\lambda)\dd h\\
&=\int_{0}^\infty \PP_x(T_{-\log(h)/\lambda}^-<e_q\land \zeta)\dd h=\int_{0}^\infty \PP_x[e^{-qT_{-\log(h)/\lambda}^-};T_{-\log(h)/\lambda}^-< \zeta]\dd h\\
&=e^{-\lambda x}+\int_{e^{-\lambda x}}^\infty \frac{\sum_{k\in \mathbb{N}_0}a_kq^ke^{-(z_0+\alpha k)x}}{\sum_{k\in \mathbb{N}_0}a_kq^kh^{(z_0+\alpha k)/\lambda}}\dd h<\infty;
\end{align*}
picking one such $q$ we get that $-\underline{X}_{(t\land \zeta)-}$ admits a finite $\PP_x$-exponential moment of order $\lambda$  for each deterministic time $t\in [0,\infty)$.

Our goal will be to show that $X$ enjoys the property of self-similarity. To make this more manageable we first change measure to get, informally speaking, a process with an infinite lifetime.

Taking $q=0$ in \eqref{eq:discrete-self-similar}, according to Proposition~\ref{proposition:characterization-one} the process $(e^{-z_0 X_{t\land T_\lll^-}}\mathbbm{1}_{\{t\land T_\lll^-<\zeta\}})_{t\in [0,\infty)}$ is a martingale for each $\lll\in \mathbb{R}$. Because of the exponential integrability of the running infimum of $X$ established above it follows in the limit $\lll\to-\infty$ that  the process $(e^{-z_0(X_t-X_0)}\mathbbm{1}_{\{t<\zeta\}})_{t\in [0,\infty)}$ is a mean-one martingale. Then introduce the change of measure
\begin{equation*}
\RR_x\vert_{\FF_t}=(e^{-z_0(X_t-x)}\mathbbm{1}_{\{t<\zeta\}})\cdot \PP_x\vert_{\FF_t},\quad x\in \mathbb{R},\, t\in [0,\infty);
\end{equation*}
 in general perhaps the new measures can only be defined locally on each $\FF_t$, $t\in [0,\infty)$, not on $\FF_\infty$, but (for now) it is enough. 
 
 We note that $t<\zeta$ a.s.-$\RR_x$ for all $x\in \mathbb{R}$ and $t\in [0,\infty)$. Further, the negative of the running infimum process of $X$ admits finite $ \RR_x$-exponential moments of all nonnegative orders at all deterministic times. Besides, by optional sampling it follows that
\begin{equation}\label{eq:change-of-measure-pssmp}
\mathbbm{1}_{\{\tau<\infty\}}\cdot \RR_x\vert_{\FF_\tau}=(e^{-z_0(X_\tau-x)}\mathbbm{1}_{\{\tau<\zeta\}})\cdot \PP_x\vert_{\FF_\tau},\quad x\in \mathbb{R},
\end{equation}
for any  bounded $\FF$-stopping time $\tau$ (see \cite[proof of Corollary~3.11]{kyprianou}); later on we will be able to omit the qualification ``bounded''.

Let now $q\in (0,\infty)$, $x\in \mathbb{R}$ and $t\in [0,\infty)$. By Proposition~\ref{proposition:characterization-one} we have that 
$$\RR_x\left[e^{-q(t\land T_\lll^-)}\sum_{k\in \mathbb{N}_0}a_kq^ke^{-\alpha kX(t\land T_\lll^-)}\right]=\sum_{k\in \mathbb{N}_0}a_kq^ke^{-\alpha kx}$$ for all real $\lll\leq x$. In particular, by Fatou, $\RR_x\left[\sum_{k\in \mathbb{N}_0}a_kq^ke^{-\alpha kX_t}\right]<\infty$ on letting $\lll\downarrow -\infty$. Furthermore, expanding the left-hand side of the  equality in the preceding display  into a $q$-power series (via $ \RR_x$-linearity and dominated convergence) we may equate coefficients multiplying each $q^k$, $k\in \mathbb{N}_0$ (due to $\sum_{k\in \mathbb{N}_0}a_ku^k<\infty$ for all $u\in [0,\infty)$). In each of the resulting equalities it is then trivial to pass to the limit $\lll\downarrow -\infty$ by dominated convergence using the exponential integrability of the running infimum of $X$. In turn these equalities recombine back into 
$$\RR_x\left[e^{-qt}\sum_{k\in \mathbb{N}_0}a_kq^ke^{-\alpha kX_t}\right]=\sum_{k\in \mathbb{N}_0}a_kq^ke^{-\alpha kx}$$ by $\RR_x$-linearity and dominated convergence again (this time, because $\RR_x\left[\sum_{k\in \mathbb{N}_0}a_kq^ke^{-\alpha kX_t}\right]<\infty$, which was established above). Reversing the order of argument of \cite[Eq.~(13.57)]{kyprianou} (or just carrying $e^{qt}$ to the other side, expanding into a $q$-power series and comparing coefficients in the preceding display) we get 
\begin{equation}\label{eq:self-similar-moment}
\RR_x[e^{-\alpha nX_t}]=\sum_{k=0}^n\frac{a_{n-k}}{a_n}e^{-(n-k)\alpha x}\frac{t^k}{k!},\quad n\in \mathbb{N}_0.
\end{equation} 

At this point we want the measures $\RR_x$, $x\in \mathbb{R}$, to be defined on the whole of $\FF_\infty$. In order to get existence \cite[Section~V.4]{parthasarathy} of such measures we transfer  the probabilities to a sufficiently rich canonical space (of c\`adl\`ag paths with lifetime) with $\FF=\FF^X$ the natural filtration of $X$ (no completion, no right-continuous modification). The latter is allowed because we are only interested in making a distributional inference on the law of $X$ and because only the distributional fact  \eqref{eq:self-similar-moment} concerning the moments of $X$, together with its strong Markov property and quasi left-continuity  in $\FF_+=\FF_+^X$ have to be carried  over. Now $\zeta=\infty$ a.s.-$\RR_x$ for all $x\in \mathbb{R}$ and \eqref{eq:change-of-measure-pssmp} holds true for all $\FF$-stopping times.

We conclude from \eqref{eq:self-similar-moment} that for each $d\in (0,\infty)$, $s\in (0,\infty)$ and $t\in [0,\infty)$ the law of the random variable $[de^{-X_{d^{-\alpha}t}}]^\alpha$ has under $\mathbb{S}_s:=\RR_{-\log(s)}$ the same (nonnegative, integer) moments as does the law of the random variable $[e^{-X_t}]^\alpha$ under $\mathbb{S}_{ds}$. Besides, we may compute and estimate, for $s\in (0,\infty)$ and  $t\in  [0,\infty)$,
\begin{align*}
\sum_{n=0}^\infty \frac{\lambda^n}{n!}\mathbb{S}_s[e^{-\alpha nX_t}]&=\sum_{n=0}^\infty \frac{\lambda^n}{n!}\sum_{k=0}^n\frac{a_{n-k}}{a_n}s^{\alpha(n-k)}\frac{t^k}{k!}\\
&\leq \sum_{n=0}^\infty \frac{\lambda^n}{n!}\sum_{k=0}^n\frac{C^{n-k}(n!)^2}{(n-k)!c^n}s^{\alpha(n-k)}\frac{t^k}{k!}\\
&=\sum_{n=0}^\infty (\lambda/c)^n\sum_{k=0}^n\frac{n!}{(n-k)!k!}(Cs^{\alpha})^{n-k}t^k\\
&=\sum_{n=0}^\infty \left(\frac{(Cs^\alpha+t)\lambda}{c}\right)^n<\infty\text{ for all $\lambda\in [0,\frac{c}{Cs^\alpha+t})$.}
\end{align*}
By this we are able to infer that the two laws under inspection admit the same finite moment generating function on a neighborhood of zero, therefore are equal. In consequence,  for each $d\in (0,\infty)$, $s\in (0,\infty)$ and $t\in [0,\infty)$ the random variable $de^{-X_{d^{-\alpha}t}}$ has under $\mathbb{S}_s$ the same law as does the random variable $e^{-X_t}$ under $\mathbb{S}_{ds}$. In other words, the transition semigroup of the Markov process $U:=(e^{-X_t})_{t\in [0,\infty)}$ under the probabilities $(\mathbb{S}_{ds})_{s\in (0,\infty)}$ is the same as the transition semigroup of the Markov process $(de^{-X_{d^{-\alpha}t}})_{t\in [0,\infty)}$  under the probabilities $(\mathbb{S}_{s})_{s\in (0,\infty)}$ [recall $X$ has cemetery $\infty$; $e^{-\infty}:=0$], no matter what the $d\in (0,\infty)$. But that means that the law of $U$ under $\mathbb{S}_{ds}$ is the same as the law of $(dU_{d^{-\alpha }t})_{t\in [0,\infty)}$ under $\mathbb{S}_s$ whenever $\{d,s\}\subset (0,\infty)$. Since also $X$ and therefore $U$ is c\`adl\`ag, strong Markov and quasi left-continuous in $\FF_+$, this is to say that $U$ under the probabilities $(\mathbb{S}_s)_{s\in (0,\infty)}$ is  a positive self-similar Markov process with self-similarity index $\alpha$ and an a.s. infinite lifetime [$0$ plays the role of the cemetery state]. 

It now follows from the Lamperti transform \cite[Theorem~13.1(i)]{kyprianou} that the process $\xi=(\xi_u)_{u\in [0,\rho)}$ given through
\begin{align*}
\tau_t&:=\int_0^t\frac{\dd s}{U_s^\alpha}=\int_0^te^{\alpha X_s}\dd s,\quad t\in [0,\zeta],\\
\rho&:= \tau(\zeta),\\
\tau^{-1}(u)&:=\inf\{t\in [0,\zeta):\tau_t>u\},\quad u\in [0,\infty),\\
 \xi_u&:=X_{\tau^{-1}(u)}, \quad u\in [0,\rho),
 \end{align*}
 is, under the probabilities $(\mathbb{R}_x)_{x\in \mathbb{R}}$, in the time-changed filtration $\FF_{\tau^{-1}}$ (even in $(\FF_{+})_{\tau^{-1}}$ but we do not need it), a spectrally positive L\'evy process that does not drift to $\infty$ (in particular $\rho=\infty$ a.s.-$\mathbb{R}_x$ for all $x\in \mathbb{R}$); denote by $\omega$ its Laplace exponent.  Then, for all $x\in \mathbb{R}$, $\{u,v\}\subset [0,\infty)$, $A\in \FF_{\tau^{-1}(u)}$ and $\lambda\in [z_0,\infty)$, from \eqref{eq:change-of-measure-pssmp} applied to $\tau=\tau^{-1}(u+v)$ and subsequently  to $\tau=\tau^{-1}(u)$,  
\begin{align*}
\PP_x[e^{-\lambda (\xi_{u+v}-\xi_u)};A,u+v<\rho]&=e^{-z_0x}e^{z_0x}\PP_x[e^{-z_0\xi_{u+v}}e^{-(\lambda-z_0) (\xi_{u+v}-\xi_u)}e^{z_0\xi_u};A,\tau^{-1}(u+v)<\zeta]\\
&=e^{-z_0x}\mathbb{R}_x[e^{-(\lambda-z_0) (\xi_{u+v}-\xi_u)}e^{z_0 \xi_u};A,u+v<\rho]\\
&=e^{\omega(\lambda-z_0)v}e^{-z_0x}\mathbb{R}_x[e^{z_0 \xi_u};A,u<\rho]\\
&=e^{\omega(\lambda-z_0)v}\PP_x(A,u<\rho).
\end{align*}
By an argument that is essentially the same as the one following \eqref{thm:levy-proof} in the proof of Theorem~\ref{theorem:levy-character} it  gives that $\xi$ is a pk-spLp under the probabilities $(\PP_x)_{x\in \mathbb{R}}$ in the filtration $\FF_{\tau^{-1}}$, whose Laplace exponent $\psi-p$ ($\psi(0)=0$, $p\in[0,\infty)$) restricts to $\omega(\cdot-z_0)$ on $[z_0,\infty)$, in particular necessarily $z_0=\psi^{-1}(p)$. 

By setting 
$$\phi_u:=\int_0^ue^{-\alpha\xi_v}\dd v,\quad u\in [0,\rho],$$
so that $\phi^{-1}=\tau$ on $[0,\zeta)$, we get
\begin{align*}
\zeta&= \phi(\rho),\\
X_t&=\xi_{\phi^{-1}(t)}, \quad t\in [0,\zeta),
 \end{align*}
 which means that $X$ is associated to $\xi$ in the manner of Proposition~\ref{proposition:fund-class} with $I=\mathbb{R}$ and $A=e^{\alpha \cdot}$, i.e. $X$ has the law of the process of Example~\ref{example:logs-ppsmp} for the indicated $\psi$, $p$ and the given $\alpha$. Finally, from a comparison of \eqref{eq:discrete-self-similar} with \eqref{given:ss}  [or of  \eqref{eq:self-similar-moment} with \cite[Eq.~(13.55)]{kyprianou}] we deduce that $a_k=(\prod_{l=1}^k\omega(\alpha l))^{-1}=(\prod_{l=1}^k\psi(\psi^{-1}(p)+\alpha l)-p)^{-1}$, $k\in \mathbb{N}_0$. The proof of the first claim is complete. 

As for the second assertion, that the law of $X$ is uniquely determined by \eqref{eq:discrete-self-similar}, it follows easily from the lemma to follow presently. Indeed one just has to look at  the coefficients in \eqref{eq:discrete-self-similar} to determine, via said lemma, $\omega$, hence $\psi-p$ and thus the law of $X$.
 \end{proof}
 \begin{lemma}
Let $\alpha\in (0,\infty)$. The Laplace exponent $\omega$ of a spectrally positive L\'evy process satisfying $\omega'(0+)\geq 0$ (for emphasis: no killing, i.e. $\omega(0)=0$) is uniquely determined by its values on the lattice $\alpha\mathbb{N}$.
 \end{lemma}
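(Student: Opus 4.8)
The plan is to argue purely complex-analytically: the lattice $\alpha\mathbb{N}$ is too dense for a function of at most quadratic growth on a half-plane to vanish on it without vanishing identically. First I would record the analytic nature and the growth of $\psi$. Writing the L\'evy--Khintchine form $\psi(z)=-\gamma z+\tfrac{\sigma^2}{2}z^2+\int(e^{-zh}+zh\mathbbm{1}_{(0,1]}(h)-1)\mm(\dd h)$, I would use the Taylor-remainder identity $e^{-zh}-1+zh=z^2\int_0^h(h-s)e^{-zs}\dd s$, which together with $|e^{-zs}|=e^{-s\Re z}\leq 1$ for $\Re z>0$ gives $|e^{-zh}+zh-1|\leq \tfrac12|z|^2h^2$ on $(0,1]$ and $|e^{-zh}-1|\leq 2$ on $(1,\infty)$. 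Hence the integral converges locally uniformly, $\psi$ is holomorphic on the open right half-plane $\{\Re z>0\}$ (differentiation under the integral sign), and $|\psi(z)|\leq C(1+|z|^2)$ there, with $C$ depending only on $(\gamma,\sigma,\mm)$.

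Next, given two Laplace exponents $\psi_1,\psi_2$ of the stated type agreeing on $\alpha\mathbb{N}$, I set $f:=\psi_1-\psi_2$, holomorphic on $\{\Re z>0\}$ with $|f(z)|\leq C(1+|z|^2)$ and $f(\alpha k)=0$ for all $k\in\mathbb{N}$. Rescaling by $g(w):=f(\alpha w)$ reduces matters to $\alpha=1$: $g$ is holomorphic on the right half-plane, obeys $|g(w)|\leq C'(1+|w|^2)$, and vanishes on $\mathbb{N}$. To convert growth into boundedness I pass to $G(w):=g(w)/(w+1)^3$; since $(|w|-1)^2\geq 0$ yields $|w+1|\geq \tfrac{1}{\sqrt 2}(1+|w|)$ on $\{\Re w\geq 0\}$, the function $G$ is a bounded holomorphic function on the right half-plane whose zero set still contains $\mathbb{N}$.

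The conceptual crux is then the Blaschke condition for the right half-plane: a nonzero bounded holomorphic function on $\{\Re w>0\}$ has zeros $(w_n)_n$ satisfying $\sum_n \tfrac{\Re w_n}{1+|w_n|^2}<\infty$. For $w_n=n$ this sum is $\sum_n \tfrac{n}{1+n^2}=\infty$, so $G\equiv 0$, whence $g\equiv 0$ and $f\equiv 0$ on the whole half-plane; in particular $\psi_1=\psi_2$ on $(0,\infty)$, and by uniqueness of the L\'evy--Khintchine representation the two triples $(\gamma,\sigma,\mm)$ coincide. (Equivalently, one may invoke Carlson's theorem, of which this is the exponential-type-zero case.)

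The genuinely technical point is the half-plane growth estimate — obtaining the clean quadratic bound via the remainder identity, so that the division by $(w+1)^3$ produces a bounded function — while the rigidity itself is immediate once $G$ is seen to vanish on a non-Blaschke sequence. I do not expect to need the hypothesis $\psi'(0+)\geq 0$ for this argument; it only serves to pin down $\psi^{-1}(0)=0$ and is inherited from the ambient application.
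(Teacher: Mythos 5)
Your proof is correct, but it takes a genuinely different route from the paper's. You treat the lemma as a complex-analytic rigidity statement: the estimates $|e^{-zh}+zh-1|\leq\tfrac12|z|^2h^2$ and $|e^{-zh}-1|\leq 2$ give holomorphy and quadratic growth of $\psi$ on $\{\Re z>0\}$, so the difference of two exponents agreeing on $\alpha\mathbb{N}$, after rescaling and division by $(w+1)^3$, is a bounded holomorphic function on the right half-plane vanishing on $\mathbb{N}$; since $\sum_n n/(1+n^2)=\infty$ violates the Blaschke condition (equivalently, by Carlson's theorem), it vanishes identically, and uniqueness of the L\'evy--Khintchine triple finishes. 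The paper instead argues constructively and in purely real-variable terms: it first reads off $\sigma^2=2\lim_{n\to\infty}\psi(\alpha n)/(\alpha n)^2$, then rewrites $\psi(z)/z^2$ as the Laplace transform of $h\mapsto\int_h^\infty\mm((g,\infty))\,\dd g-\gamma$, so that the lattice values become a Hausdorff moment sequence; determinacy of the Hausdorff moment problem then recovers $\gamma$ (as a limit) and the tail of $\mm$ (by differentiation), i.e. the paper exhibits an explicit inversion of $(\psi(\alpha n))_{n\in\mathbb{N}}$ into the triple $(\gamma,\sigma^2,\mm)$. Your approach buys generality and brevity: as you note, it never uses $\psi'(0+)\geq 0$, so it proves the statement for every spectrally positive L\'evy process without killing; one small correction, though, is that in the paper this hypothesis is not there to pin down $\psi^{-1}(0)$ but to justify the representation with full compensation $zh$ and $\mm[h^2\wedge h]<\infty$, which is exactly what makes the tail integral $\int_h^\infty\mm((g,\infty))\,\dd g$ finite and the moment-problem step work. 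What the paper's approach buys in exchange is a concrete reconstruction procedure for the characteristics from the lattice values (rather than an abstract uniqueness assertion), and the extra hypothesis is harmless in the ambient application, where the Lamperti-transformed L\'evy process does not drift to $+\infty$.
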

 \begin{proof}
 Because $\omega'(0+)\geq 0$ (and because $\omega(0)=0$) we know that we may write 
 $$\omega(z)=-\gamma z+\frac{\sigma^2}{2}z^2+\int( e^{-z h}+z h-1)\mathsf{m}(\dd h),\quad z\in [0,\infty),$$ for unique: drift $\gamma\in \mathbb{R}$, volatility coefficient $\sigma\in [0,\infty)$ and jump (L\'evy) measure $\mathsf{m}$ on $\mathcal{B}_{(0,\infty)}$ satisfying $\mathsf{m}^h[h^2\land h]<\infty$. We show such $\gamma,\sigma,\mathsf{m}$ are uniquely determined by the sequence $(\omega(\alpha n))_{n\in \mathbb{N}}$. First by dominated convergence we identify $2\lim_{n\to\infty}\frac{\omega(\alpha n)}{(\alpha n)^2}=\sigma^2$, hence may assume $\sigma^2=0$. Then rewrite (via Tonelli)
$$
 \omega(z)=z\left(-\gamma +\int_0^\infty(1- e^{-z h})\mathsf{m}((h,\infty))\dd h\right)=z^2\int_0^\infty e^{-zh}\left(\int_h^\infty\mathsf{m}((g,\infty))\dd g-\gamma\right)\dd h,\quad z\in (0,\infty).$$ Knowing $(\omega(\alpha n))_{n\in \mathbb{N}}$ we therefore know (the finite quantities)
 $$\int_0^1 v^n\left(\int_{-\alpha^{-1} \log(v)}^\infty\mathsf{m}((g,\infty))\dd g-\gamma\right)\dd v,\quad n\in \mathbb{N}_0.$$ By functional monotone class we know $\int_{-\alpha^{-1} \log(v)}^\infty\mathsf{m}((g,\infty))\dd g-\gamma$ for Lebesgue-a.e. hence by continuity every $v\in (0,1]$; $\lim_{v\downarrow 0}$ determines $\gamma$, and then the right-continuous tail of $\mathsf{m}$ (hence $\mathsf{m}$ itself) we get by differentiation from the left in $v\in (0,1)$.
 \end{proof}
 
 \subsection{Branching}
We turn to the characterization of continuous-state branching processes. We separate according to whether zero is hit with positive probability or not: we feel the statement and the line of the proof are sufficiently different for the two cases to warrant such separation.
 
  \begin{theorem}\label{theorem:csbp}
Let $I=(0,\infty)$. Suppose that there exist a  $Q\in [0,\infty)$, a $z_0\in [0,\infty)$, a (then any)  $\theta\in (z_0,\infty)$ and a $C^1$ map $g:[z_0,\infty)\to [0,\infty)$ satisfying $g>0$ on $(z_0,\infty)$, $\lim_\infty g=\infty$, $\int^\infty_\theta \frac{1}{g}=\infty$, $g(z_0)=0$,  such that 
 \begin{equation}\label{eq:reverese-branching}
\PP_x[e^{-q T_\lll^-};T_\lll^-<\zeta]=
\frac{\int_{z_0}^\infty \frac{1}{g(z)}\exp\left(-xz+\int_\theta^z\frac{q}{g}\right)\dd z}{\int_{z_0}^\infty \frac{1}{g(z)}\exp\left(-\lll z+\int_\theta^z\frac{q}{g}\right)\dd z},\quad\text{ for all } \lll\leq x\text{ from }(0,\infty),\,\text{all } q\in (Q,\infty).
\end{equation} Then $X$ has the law of the process $X$ of Example~\ref{example:csbp} for some $p$ and $\psi$; necessarily  $z_0=\psi^{-1}(p)$ and $g=(\psi-p)\vert_{[\psi^{-1}(p),\infty)}$. In particular, the law of $X$ is uniquely determined by \eqref{eq:reverese-branching}.
 \end{theorem}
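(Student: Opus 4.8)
The plan is to extract from \eqref{eq:reverese-branching} the scale function of $X$, upgrade the first-passage identity to a genuine exponential martingale via Proposition~\ref{proposition:characterization-one}, and then Laplace-invert in the parameter $q$ to expose the branching property of $X$; a Markov process with the branching property is a continuous-state branching process, whose mechanism will turn out to be $g$. For $q>Q$ put $\Phi_q(x):=\int_{z_0}^\infty \frac{1}{g(z)}\exp(-xz+\int_\theta^z\frac{q}{g})\dd z$, so that \eqref{eq:reverese-branching} is precisely $\Phi_q(x)=\Phi_q(\lll)\PP_x[e^{-qT_\lll^-};T_\lll^-<\zeta]$. I would first record the analytic facts I shall use: since $g$ is $C^1$ with $g(z_0)=0$, the factor $\exp(\int_\theta^z\frac qg)$ decays at $z_0$ fast enough to absorb the non-integrable singularity of $1/g$ there, while $\lim_\infty g=\infty$ tames the tail, so $\Phi_q$ is finite, positive, continuous, strictly decreasing and $\downarrow 0$ at $\infty$. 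Because $1/g$ is non-integrable at $z_0$ and $\int_\theta^\infty\frac1g=\infty$, the substitution $w=w(z):=\int_\theta^z\frac{1}{g}$ is an increasing bijection $(z_0,\infty)\to\RR$ that pushes $\frac{1}{g(z)}\exp(\int_\theta^z\frac qg)\dd z$ forward to $e^{qw}\dd w$; writing $z(\cdot)$ for its inverse, $\Phi_q(x)=\int_\RR e^{qw}e^{-z(w)x}\dd w$. Finally I would note the comparison $\Phi_q(\lll)/\Phi_{q'}(\lll)\to0$ as $\lll\downarrow0$ whenever $Q<q<q'$, which follows by splitting the defining integral at a large $z$ and using $\exp(\int_\theta^z\tfrac{q-q'}{g})\downarrow0$.

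By the hypothesis and $\Phi_q>0$, Proposition~\ref{proposition:characterization-one}\ref{equivalent:vi} holds, hence so does \ref{equivalent:iii}: the stopped process $(\Phi_q(X_{t\wedge T_\lll^-})e^{-q(t\wedge T_\lll^-)}\mathbbm{1}_{\{t\wedge T_\lll^-<\zeta\}})_t$ is a martingale with the stated terminal value, so $\Phi_q(x)=\PP_x[\Phi_q(X_{t\wedge T_\lll^-})e^{-q(t\wedge T_\lll^-)};t\wedge T_\lll^-<\zeta]$. Splitting on $\{t<T_\lll^-\}$ versus $\{T_\lll^-\le t\}$ writes this as $\Phi_q(x)=A_\lll+B_\lll$, with $A_\lll:=\PP_x[\Phi_q(X_t)e^{-qt};t<T_\lll^-\wedge\zeta]$ and $B_\lll:=\Phi_q(\lll)\PP_x[e^{-qT_\lll^-};T_\lll^-\le t,\,T_\lll^-<\zeta]$. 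On $\{t<\zeta\}$ the c\`adl\`ag $(0,\infty)$-valued path has $\inf_{[0,t]}X>0$, so $\{t<T_\lll^-\wedge\zeta\}\uparrow\{t<\zeta\}$ as $\lll\downarrow0$ and $A_\lll\uparrow\PP_x[\Phi_q(X_t)e^{-qt};t<\zeta]$ by monotone convergence. The \emph{main obstacle} is to show $B_\lll\to0$, that is, that the nonnegative local martingale of Proposition~\ref{proposition:characterization-one}\ref{equivalent:i} sheds no mass at the lower boundary in the limit; I would clear it by the two-parameter estimate, valid for any $q'\in(q,\infty)$,
\[
B_\lll\le \Phi_q(\lll)\,\PP_x[T_\lll^-\le t,\,T_\lll^-<\zeta]\le \Phi_q(\lll)\,e^{q't}\PP_x[e^{-q'T_\lll^-};T_\lll^-<\zeta]=e^{q't}\Phi_{q'}(x)\,\frac{\Phi_q(\lll)}{\Phi_{q'}(\lll)}\longrightarrow0,
\]
the convergence being exactly the comparison recorded above. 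Hence $\PP_x[\Phi_q(X_t)e^{-qt};t<\zeta]=\Phi_q(x)$ for all $q>Q$, $x\in(0,\infty)$ and $t\ge0$.

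Expanding $\Phi_q(X_t)$ and invoking Tonelli, this identity reads $\int_\RR e^{qw}\PP_x[e^{-z(w)X_t};t<\zeta]\dd w=e^{qt}\Phi_q(x)=\int_\RR e^{qw}e^{-z(w-t)x}\dd w$ (shift $w\mapsto w+t$), finite for every $q>Q$. Uniqueness of the bilateral Laplace transform on the half-line $(Q,\infty)$ then forces $\PP_x[e^{-z(w)X_t};t<\zeta]=e^{-z(w-t)x}$ for all $w$, i.e. $\PP_x[e^{-zX_t};t<\zeta]=e^{-x\beta_t(z)}$ for $z\in(z_0,\infty)$, where $\beta_t(z):=z(w(z)-t)$. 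This is the branching property; differentiating $w(\beta_t(z))=w(z)-t$ in $t$ gives $\partial_t\beta_t(z)=-g(\beta_t(z))$ with $\beta_0(z)=z$, the evolution equation of a continuous-state branching process with branching mechanism $g$.

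Since this Laplace functional determines the transition semigroup and matches that of Example~\ref{example:csbp}, $X$ is such a process; writing its (analytic) branching mechanism as $\psi-p$ with $\psi(0)=0$ and $p=-(\psi-p)(0)\ge0$, the root $g(z_0)=0$ identifies $z_0=\psi^{-1}(p)$ and $g=(\psi-p)|_{[\psi^{-1}(p),\infty)}$, whence the law of $X$ is uniquely determined. I expect the two genuinely non-routine points to be the no-mass-loss step (resolved by the $\Phi_q/\Phi_{q'}$ comparison) and the careful justification of the bilateral Laplace inversion on $(Q,\infty)$; the analytic bookkeeping for $\Phi_q$ and the final CSBP identification are standard.
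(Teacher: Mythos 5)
Your proof is correct, and while it opens the same way as the paper's (Proposition~\ref{proposition:characterization-one} turns \eqref{eq:reverese-branching} into the stopped martingale identity, and the crux is removing the stopping at $T_\lll^-$ as $\lll\downarrow 0$), its two central steps are genuinely different. For the limit $\lll\downarrow 0$, the paper first proves the integrability estimate $\PP_x[\Phi_q(\underline{X}_{(t\land\zeta)-})]<\infty$ (via an independent exponential time and a Jensen comparison of $\Phi_p$ with $\Phi_q^{p/q}$) and then applies dominated convergence; you instead split into $A_\lll+B_\lll$, use monotone convergence for $A_\lll$, and kill $B_\lll$ with the comparison $\Phi_q(\lll)/\Phi_{q'}(\lll)\to 0$ --- a correct estimate whose proof (split at large $z$, use $\int_\theta^\infty 1/g=\infty$ so that both $\Phi_{q'}(0+)=\infty$ and $\exp(\int_\theta^M\frac{q-q'}{g})\to 0$) leans on exactly the hypothesis that separates this theorem from Theorem~\ref{theorem:csbp-2}, as it must, since there mass does reach $0$. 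The larger divergence is downstream: the paper integrates in $t$, integrates by parts in $z$ (with a nontrivial boundary-term point), matches moment generating functions of two positive measures, derives the transport equation $\partial_v U_v(z;x)=-g(z)\partial_zU_v(z;x)$, proves uniqueness by characteristics, and gets the branching property from linearity plus uniqueness; you bypass the PDE entirely by noting, after the substitution $w=G(z)$ (which the paper also uses), that $\PP_x[\Phi_q(X_t)e^{-qt};t<\zeta]=\Phi_q(x)$ is a bilateral Laplace identity in $q$, and inverting it to obtain the closed form $U_t(z;x)=e^{-xG^{-1}(G(z)-t)}$, from which the branching property and the mechanism are immediate. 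Your route is shorter and produces the transition function explicitly; the paper's PDE route has the structural advantage that it is reused verbatim in the proof of Theorem~\ref{theorem:csbp-2}.

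Two points you should tighten, neither fatal. First, the inversion step deserves the standard justification: $s\mapsto\int_{\mathbb{R}} e^{sw}h(w)\,\dd w$, with $h(w)=U_t(z(w);x)-e^{-z(w-t)x}$, is analytic on $\{\Re s>Q\}$ by absolute convergence there, vanishes on $(Q,\infty)$, hence identically; Fourier uniqueness at a fixed abscissa plus continuity of $h$ then gives $h\equiv 0$. Second, your closing sentence is slightly circular as written: before saying the Laplace functional ``matches that of Example~\ref{example:csbp}'' you must first conclude from the branching property (plus the standing Markov assumption) that $X$ is a continuous-state branching process, so that by the classification of such processes its mechanism has the form $\psi-p$; only then does the flow equation $\partial_t\beta_t=-g(\beta_t)$ identify $g=(\psi-p)\vert_{[z_0,\infty)}$ and $z_0=\psi^{-1}(p)$, and one should also note that $\int^\infty 1/g=\infty$ forces $\int^\infty 1/\psi=\infty$, so that one is indeed in the regime of Example~\ref{example:csbp}.
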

Clearly for the process of Example~\ref{example:csbp}  the corresponding quantities $Q$, $z_0$, $\theta$ and $g$ do exist.  The proof will essentially boil down to inferring the branching property from the Laplace transforms.
 \begin{proof}
As previously, by Theorem~\ref{theorem:skip-free} we may and do assume $X$ has no negative jumps a.s..  As another preliminary observation notice that actually \eqref{eq:reverese-branching} rewrites into the more compact form
  \begin{equation}\label{eq:converse-branching}
\PP_x[e^{-q T_\lll^-};T_\lll^-<\zeta]=
\frac{x\int_{z_0}^\infty e^{-xz}\exp\left(\int_\theta^z\frac{q}{g}\right)\dd z}{\lll\int_{z_0}^\infty e^{-\lll z}\exp\left(\int_\theta^z\frac{q}{g}\right)\dd z},\quad\text{ for all } \lll\leq x\text{ from }(0,\infty),\,\text{all } q\in (Q,\infty),
\end{equation}
where we have used $\lim_\infty g=0$ and $\int_{z_0}^\theta\frac{1}{g}=\infty$, which follows from the $C^1$ property of $g$ at $z_0$ rendering $0\leq g'_+(z_0)<\infty$ and from $g(z_0)=0$. By analytic extension and continuity we get this equality for all $q\in [0,\infty)$,  put $\Phi_q(x):=x\int_{z_0}^\infty e^{-x z}\exp\left(\int_\theta^z\frac{q}{g}\right)\dd z$ for $x\in (0,\infty)$, $q\in [0,\infty)$, and note that $\Phi_q$ maps $(0,\infty)$ continuously and strictly decreasingly onto $(0,\infty)$ for all $q\in(0,\infty)$, while $\Phi_0$ is bounded continuous.
%
%
 
Next, let $x\in (0,\infty)$ and $q\in (0,\infty)$. For $p\in (q,\infty)$ and $e_p\sim \mathrm{Exp}(p)$  independent of $X$ (to which we may grant ourselves access) we compute
\begin{align*}
\PP_x[\Phi_q(\underline{X}_{(e_p\land \zeta)-})]&=\int_0^\infty\PP_x(\Phi_q(\underline{X}_{(e_p\land \zeta)-})>z)\dd z=\Phi_q(x)+\int_{\Phi_q(x)}^\infty\PP_x(\underline{X}_{(e_p\land \zeta)-}<\Phi_q^{-1}(z))\dd z\\
&=\Phi_q(x)+\int_{\Phi_q(x)}^\infty\PP_x(T^-_{\Phi_q^{-1}(z)}< e_p\land \zeta)\dd z\\
&=\Phi_q(x)+\int_{\Phi_q(x)}^\infty\PP_x\left[e^{-pT^-_{\Phi_q^{-1}(z)}};T^-_{\Phi_q^{-1}(z)}< \zeta\right]\dd z\\
&=\Phi_q(x)+\Phi_p(x)\int_{\Phi_q(x)}^\infty\frac{\dd z}{\Phi_p(\Phi_q^{-1}(z))}\\
&\leq \Phi_q(x)+\Phi_p(x)\int_{\Phi_q(x)}^\infty\frac{\dd z}{z^{\frac{p}{q}}}<\infty,
\end{align*}
because by Jensen
\begin{align*}
\Phi_p(y)&=e^{-yz_0}\int_{z_0}^\infty\exp\left(\int_\theta^z\frac{q}{g}\right)^{\frac{p}{q}}ye^{-y(z-z_0)}\dd z\\
&\geq e^{-yz_0}\left(\int_{z_0}^\infty\exp\left(\int_\theta^z\frac{q}{g}\right)ye^{-y(z-z_0)}\dd z\right)^{\frac{p}{q}}=e^{yz_0(\frac{p}{q}-1)}\Phi_q(y)^{\frac{p}{q}}\geq \Phi_q(y)^{\frac{p}{q}},\quad y\in (0,\infty).
\end{align*}
We conclude that $\PP_x[\Phi_q(\underline{X}_{(t\land \zeta)-})]<\infty$ for all $t\in [0,\infty)$, $x\in (0,\infty)$ and $q\in [0,\infty)$ (it is trivial for $q=0$).

With this integrability property in hand, we may proceed as follows. According to Proposition~\ref{proposition:characterization-one} we have  for all $q\in [0,\infty)$, $t\in[0,\infty)$, all $\lll\leq x$ from $(0,\infty)$, 
$$\PP_x[\Phi_q(X_{t\land T_\lll^-})e^{-q(t\land T_\lll^-)};t\land T_\lll^-<\zeta]=\Phi_q(x).$$ By the mentioned integrability and dominated convergence we may pass to the limit $\lll\downarrow 0$ to get 
$$\PP_x[\Phi_q(X_{t})e^{-qt};t<\zeta]=\Phi_q(x).$$
Setting $$U_t(z;x):=\PP_x[e^{-zX_t};t<\zeta],\quad z\in [0,\infty),$$ it transcribes into
   $$e^{qt}\int_{z_0}^\infty xe^{-zx}\exp\left(\int_\theta^z\frac{q}{g}\right)\dd z=-\int_{z_0}^\infty \partial_z(U_t(z;x))\exp\left(\int_\theta^z\frac{q}{g}\right)\dd z<\infty$$ (remark that $-\partial_z(U_t(z;x))=\PP_x[X_te^{-zX_t};t<\zeta]<\infty$ for $z\in (0,\infty)$),
   and upon integrating both sides $-q\int_0^v\dd t$ and plugging back in, for $v\in [0,\infty)$, 
$$
\int_{z_0}^\infty xe^{-zx}\exp\left(\int_\theta^z\frac{q}{g}\right)\dd z
=-\int_{z_0}^\infty \partial_z(U_v(z;x))\exp\left(\int_\theta^z\frac{q}{g}\right)\dd z+q\int_{z_0}^\infty \int_0^v\partial_z(U_t(z;x))\dd t\exp\left(\int_\theta^z\frac{q}{g}\right)\dd z.
$$
 Elementary integration by parts procures, provided $q>0$,
$$\int_{z_0}^\infty \left(\frac{e^{-zx}}{g(z)}-\int_0^v\partial_z(U_t(z;x))\dd t\right)\exp\left(\int_\theta^z\frac{q}{g}\right)\dd z=\int_{z_0}^\infty U_v(z;x)\frac{1}{g(z)}\exp\left(\int_\theta^z\frac{q}{g}\right)\dd z.$$
(There is a minor non-trivial technical point that must be addressed in effecting per partes on $\int_{z_0}^\infty \partial_z(U_v(z;x))\exp\left(\int_\theta^z\frac{q}{g}\right)\dd z$, namely why $\lim_{z\to\infty}U_v(z;x)\exp(\int_\theta^z\frac{q}{g})=0$. But certainly $\int_{z_0}^\infty U_v(z;x)\frac{q}{g(z)}\exp\left(\int_\theta^z\frac{q}{g}\right)\dd z=\PP_x[\Phi_q(X_t);t<\zeta]<\infty$ so in \label{page:new}
$$\left[U_v(z;x)\exp\left(\int_\theta^{z}\frac{q}{g}\right)\right]_{z=z_0}^{z=n}=\int_{z_0}^n U_v(z;x)\frac{q}{g(z)}\exp\left(\int_\theta^{z}\frac{q}{g}\right)\dd z+\int_{z_0}^n \partial_{z}(U_v(z;x))\exp\left(\int_\theta^z\frac{q}{g}\right)\dd z$$ we may take the limit as $n\to \infty$: on the right-hand side the first term is converging towards the finite limit $\PP_x[\Phi_q(X_t);t<\zeta]$ as $n\to\infty$ by monotone convergence, while the second term is convergent towards the finite limit $\int_{z_0}^\infty \partial_z(U_v(z;x))\exp\left(\int_\theta^z\frac{q}{g}\right)\dd z$; therefore on the left-hand side the limit $\lim_{z\to\infty}U_v(z;x)\exp(\int_\theta^z\frac{q}{g})$ must for sure exist and be finite. Because $\int^\infty_\theta \frac{1}{g}=\infty$ and $q\in [0,\infty)$ is arbitrary it can only be zero.)

Now introduce $G(z):=\int_\theta^z\frac{1}{g}$, $z\in (z_0,\infty)$, so that $G$ maps $(z_0,\infty)$ onto $\mathbb{R}$ in a $C^1$ strictly increasing fashion, and effect the change of variables ``$y=G(z)$'' to get
$$\int_\mathbb{R} \left(e^{-xG^{-1}(y)}-g(G^{-1}(y))\int_0^vU_t'(G^{-1}(y);x)\dd t\right)e^{qy}\dd y=\int_\mathbb{R} U_v(G^{-1}(y);x)e^{qy}\dd y.$$ Fixing a $q_0\in (0,\infty)$ it follows that the (nonnegative, since $U'_t(\cdot;x)\leq 0$, as $U_t(\cdot;x)$ is nonincreasing) measures 
$$ \left(e^{-xG^{-1}(y)}-g(G^{-1}(y))\int_0^vU_t'(G^{-1}(y);x)\dd t\right)e^{q_0y}\dd y,\quad y\in \mathbb{R},$$
and $$ U_v(G^{-1}(y);x)e^{q_0y}\dd y,\quad y\in \mathbb{R},$$
have moment generating functions, which are finite and agree on a neighborhood of zero. Therefore (by analytic extension and functional monotone class) the two measures agree. Therefore 
\begin{equation}\label{csbp:fund}
0< e^{-xz}-g(z)\int_0^v\partial_z(U_t(z;x))\dd t= U_v(z;x)<\infty
\end{equation} for Lebesgue-a.e., whence by continuity every $z\in (z_0,\infty)$. Therefore $[0,\infty)\ni s\mapsto U_s(z;x)$ is continuously differentiable and 
$$-g(z)\partial_zU_v(z;x)= \partial_vU_v(z;x),\quad z\in (z_0,\infty).$$ 
Writing $W(v,z;x):=-\log U_v(z;x)\in (0,\infty)$, $z\in [0,\infty)$, we get for each  $z\in (z_0,\infty)$ that 
$$0=\partial_vW(v,z;x)+g(z)\partial_zW(v,z;x)\text{ and } W(0,z;x)=zx.$$

Let us  briefly argue by the method of characteristics that the solution to the preceding PDE boundary value problem at  fixed $x$ is unique in the class of maps $[0,\infty)\times (z_0,\infty)\to\mathbb{R}$ that are $C^1$ in each coordinate. Consider two solutions $W_1$, $W_2$, and their difference $W:=W_1-W_2$ (we do not bother introducing a new letter). Then for $c\in (z_0,\infty)$  arbitrary, we get
 $$\frac{\dd}{\dd z}W\left(\int_c^z \frac{1}{g},z\right)=0,\quad z\in [c,\infty);$$ therefore $$W\left(\int_c^z \frac{1}{g},z\right)\text{ is constant in }z\in [c,\infty).$$
 Taking $z=c$ we infer that this constant is equal to $W(0,c)=0$. At given $z\in (z_0,\infty)$, as we vary $c\in (z_0,z]$, $\int_c^z \frac{1}{g}$ assumes all values in $[0,\infty)$. We get $W=0$, i.e. $W_1=W_2$. 
 
 Uniqueness of the PDE  boundary value problem established, we see by additivity that $$W(v,z;x_1+x_2)=W(v,z;x_1)+W(v,z;x_2),\quad z\in (z_0,\infty),\, \{x_1,x_2\}\subset (0,\infty).$$In other words, because (finite, on a neighborhood of infinity) Laplace transforms determine laws on $[0,\infty]$, $$(X_v)_\star \PP_{x_1+x_2}=({X_v}_\star\PP_{x_1})\star ({X_v}_\star\PP_{x_2}),\quad  \{x_1,x_2\}\subset (0,\infty).$$ But that means precisely that $X$ under the probabilities $(\PP_x)_{x\in (0,\infty)}$ is a continuous-state branching process (that does not hit $0$, so $0$ is naturally excluded from the state-space, $\infty$ is considered as a cemetery rather than a point in the state space outright, but none of this matters (we may add $0$ and $\infty$ as absorbing states if we like)). So, by the Lamperti transform \cite[Theorem~12.1]{kyprianou}, for some $p$ and $\psi$, $X$ has the law of the process of Example~\ref{example:csbp}. Comparing \eqref{given:branching} with \eqref{eq:converse-branching} we deduce finally  that $z_0=\psi^{-1}(p)$ and $g=\psi-p$ on $[z_0,\infty)$, which concludes the proof. 
 \end{proof}
 
   \begin{theorem}\label{theorem:csbp-2}
Let $I=[0,\infty)$ and let $0$ be absorbing for $X$. Suppose that there exist a  $Q\in [0,\infty)$, a $z_0\in [0,\infty)$, and a $C^1$ map $g:[z_0,\infty)\to [0,\infty)$ satisfying $g>0$ \& $\int^\infty_{\cdot} \frac{1}{g}<\infty$ on $(z_0,\infty)$, $g(z_0)=0$,  such that 
 \begin{equation}\label{eq:reverese-branching-2}
\PP_x[e^{-q T_\lll^-};T_\lll^-<\zeta]=
\frac{\int_{z_0}^\infty \frac{1}{g(z)}\exp\left(-xz-\int_z^\infty\frac{q}{g}\right)\dd z}{\int_{z_0}^\infty \frac{1}{g(z)}\exp\left(-\lll z-\int_z^\infty\frac{q}{g}\right)\dd z},\quad\text{ for all } \lll\leq x \text{ from  }[0,\infty),\,\text{all } q\in (Q,\infty).
\end{equation} Then $X$ has the law of the process  of Example~\ref{example:csbp-2} for some $p$ and $\psi$; necessarily  $z_0=\psi^{-1}(p)$ and $g=(\psi-p)\vert_{[\psi^{-1}(p),\infty)}$. In particular, the law of $X$ is uniquely determined by \eqref{eq:reverese-branching-2}.
 \end{theorem}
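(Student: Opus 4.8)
The plan is to mirror the proof of Theorem~\ref{theorem:csbp}, the absorbing endpoint $0\in I$ here taking over the role played there by $0=\inf I\notin I$. First I would recast \eqref{eq:reverese-branching-2} in compact form. Setting $w(z):=\exp\big(-\int_z^\infty\frac qg\big)$ and integrating by parts in $z$ — legitimate because the $C^1$-property of $g$ at $z_0$ forces $g$ to vanish at most linearly there, so $\int_{z_0}^z\frac1g=\infty$ for every $z>z_0$ and hence $w(z_0+)=0$, while $\int^\infty_{\cdot}\frac1g<\infty$ gives $w(\infty)=1$ — the boundary terms drop out exactly as in the passage to \eqref{eq:converse-branching}, and one gets $\int_{z_0}^\infty\frac1{g(z)}e^{-xz}w(z)\,\dd z=\frac1q\Phi_q(x)$ with
$$\Phi_q(x):=x\int_{z_0}^\infty e^{-xz}w(z)\,\dd z=\int_{z_0}^\infty e^{-xz}\,\dd F_q(z),\qquad \dd F_q:=\tfrac qg w\,\dd z.$$
The decisive contrast with Theorem~\ref{theorem:csbp} is that now, $\int^\infty_{\cdot}\frac1g$ being finite, $F_q$ is a \emph{probability} measure on $(z_0,\infty)$ (total mass $w(\infty)-w(z_0)=1$); hence $\Phi_q$ is the Laplace transform of a law, so it is bounded by $1$, continuous and strictly decreasing from $\Phi_q(0+)=\Phi_q(0)=1$ onto $(0,1)$ for $q>0$, while $\Phi_0(x)=e^{-xz_0}$. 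Both sides being analytic in $q$, the identity $\PP_x[e^{-qT_\lll^-};T_\lll^-<\zeta]=\Phi_q(x)/\Phi_q(\lll)$ expressed by \eqref{eq:reverese-branching-2} extends from $q\in(Q,\infty)$ to all $q\in[0,\infty)$.

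Because $\Phi_q\le 1$ is bounded, the integrability input of Theorem~\ref{theorem:csbp} is free: $\PP_x[\Phi_q(\underline X_{(t\land\zeta)-})]\le 1$, so no Jensen estimate is needed. As there, Proposition~\ref{proposition:characterization-one} applied with $\Phi=\Phi_q$ shows $(\Phi_q(X_{t\land T_\lll^-})e^{-q(t\land T_\lll^-)}\mathbbm 1_{\{t\land T_\lll^-<\zeta\}})_{t}$ has constant mean $\Phi_q(x)$; expanding $\Phi_q$ through $F_q$, using $-\partial_zU_t(z;x)=\PP_x[X_te^{-zX_t};t<\zeta]$ with $U_t(z;x):=\PP_x[e^{-zX_t};t<\zeta]$, and letting $\lll\downarrow0$ produces an identity for $-\int_{z_0}^\infty\partial_zU_t(z;x)w\,\dd z$. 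Here lies the one genuinely new phenomenon, and the main obstacle: $0$ is now hit with positive probability. On the one hand, on the extinction event the factor $X_{t\land T_\lll^-}e^{-zX_{t\land T_\lll^-}}=\lll e^{-z\lll}$ tends to $0$ pointwise in $z$ as $\lll\downarrow0$ yet carries unit mass off to $z=\infty$ (where $w=1$), producing an extra term governed by the sub-probability law $\PP_x(T_0^-\in\cdot\,,\,T_0^-<\zeta)$ of the extinction time $T_0^-=T_{\inf I}^-$; on the other hand, the subsequent integration by parts in $z$ now leaves a non-vanishing boundary term at $z=\infty$ equal to the extinction probability $\PP_x(X_v=0,v<\zeta)$. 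I would then check, by a direct computation with that sub-probability law (integrating $\int_0^v\,\dd t$ and combining), that these two corrections cancel identically, so that the fundamental identity \eqref{csbp:fund}, $e^{-xz}-g(z)\int_0^v\partial_zU_t(z;x)\,\dd t=U_v(z;x)$ for $z\in(z_0,\infty)$, holds here verbatim.

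From \eqref{csbp:fund} onward the argument is word-for-word that of Theorem~\ref{theorem:csbp}: $v\mapsto U_v(z;x)$ is $C^1$, satisfies $-g(z)\partial_zU_v=\partial_vU_v$, and $W(v,z;x):=-\log U_v(z;x)$ solves the boundary value problem $\partial_vW+g\partial_zW=0$, $W(0,z;x)=zx$, whose solution is unique by the method of characteristics; additivity then yields $W(v,z;x_1+x_2)=W(v,z;x_1)+W(v,z;x_2)$, i.e. the branching property $(X_v)_\star\PP_{x_1+x_2}=({X_v}_\star\PP_{x_1})\star({X_v}_\star\PP_{x_2})$. Thus $X$ is a continuous-state branching process, now one that does hit $0$ (consistently with $\int^\infty_{\cdot}\frac1g<\infty$), so by the Lamperti transform \cite[Theorem~12.1]{kyprianou} it has the law of the process of Example~\ref{example:csbp-2} for some $p$ and $\psi$; comparing \eqref{given:branching-2} with the compact form identifies $z_0=\psi^{-1}(p)$ and $g=(\psi-p)\vert_{[\psi^{-1}(p),\infty)}$, whence the law of $X$ is uniquely determined. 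The sole step requiring real care is the cancellation of the extinction corrections; everything else transfers mechanically.
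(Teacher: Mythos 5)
Your proposal is correct, and its crux --- the cancellation of the two extinction corrections --- is a true statement that can be verified exactly along the lines you indicate: integrating the time-$t$ identity $q\int_0^v\dd t$ and recombining with the identity at $t=v$ leaves the extra term $\PP_x(T_0^-\le v,\,T_0^-<\zeta)$, while the per partes in $z$ (using $w(z_0+)=0$, $w(\infty)=1$) leaves the boundary term $-\lim_{z\to\infty}U_v(z;x)w(z)=-\PP_x(X_v=0,\,v<\zeta)$; the two coincide precisely because $0$ is absorbing, and what survives is the paper's integrated identity $\int_{z_0}^\infty\bigl(\frac{e^{-xz}}{g}-\int_0^v\partial_z U_t\,\dd t\bigr)w\,\dd z=\int_{z_0}^\infty\frac{U_v}{g}\,w\,\dd z$. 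The paper, however, reaches that identity by a different route: rather than mirroring Theorem~\ref{theorem:csbp} with the discounted martingale of Proposition~\ref{proposition:characterization-one}, it invokes Proposition~\ref{proposition:characterization-generator} and the discussion around \eqref{A} to pass to the undiscounted generator-form martingale $(\Phi_q(X_t)\mathbbm{1}_{\{t<\zeta\}}-\int_0^{t\land T_0^-\land\zeta}q\Phi_q(X_s)\,\dd s)_{t\in[0,\infty)}$. There the representation $\Phi_q(y)=\int_{z_0}^\infty\frac{e^{-yz}}{g}w\,\dd z$ is valid at $y=0$ as well (unlike $q\Phi_q(y)=y\int_{z_0}^\infty e^{-yz}w\,\dd z$, whose failure at $y=0$ is exactly what generates your corrections), and the compensator is stopped at $T_0^-$, so no extinction terms ever appear: Tonelli plus ``$X=0$ on $[T_0^-,\infty)$'' gives the integrated identity at sight, with no time-integration or boundary-term bookkeeping. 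So your route buys elementarity (only Proposition~\ref{proposition:characterization-one}, in exact parallel with Theorem~\ref{theorem:csbp}) at the price of making the cancellation the delicate step, whereas the paper buys a trivial cancellation at the price of the generator machinery. One point your ``word-for-word'' transfer glosses over: extracting the pointwise \eqref{csbp:fund} from the integrated identity in Theorem~\ref{theorem:csbp} used the substitution $y=G(z)$ with $G=\int_\theta^{\cdot}\frac1g$ mapping onto $\mathbb{R}$, while here $G$ maps onto a half-line bounded above (as $\int^\infty\frac1g<\infty$); the tilted moment-generating-function determinacy argument is insensitive to the support, so your transfer does go through, but the paper instead substitutes a functional monotone class argument based on the multiplicative family $\{\exp(-q\int_z^\infty g^{-1}):q\in[0,\infty)\}$. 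Neither difference is a gap: your proof is sound.
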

Here we have had to assume in addition that zero is absorbing, since the first passage times downwards cannot tell us anything about the behaviour of the process out of zero. As hitherto we also note that for the process of Example~\ref{example:csbp-2} the $Q$, $z_0$ and $g$ as stipulated do exist.
 \begin{proof}
Yet again by Theorem~\ref{theorem:skip-free} we may and do assume $X$ has no negative jumps a.s.. By analytic continuation we may and do assume $Q=0$. Let $q\in (0,\infty)$. Put $$\Phi_q(x):=\int_{z_0}^\infty \frac{1}{g(z)}\exp\left(-xz-\int_z^\infty\frac{q}{g}\right)\dd z,\quad x\in [0,\infty),$$ a  bounded continuous map vanishing at infinity, and note that $$q\Phi_q(x)=\int_{z_0}^\infty xe^{-xz}\exp\left(-\int_z^\infty\frac{q}{g}\right)\dd z,\quad x\in (0,\infty).$$  According to Proposition~\ref{proposition:characterization-one} the process $(\Phi_q(X_t)e^{-q(t\land T_0^-)}\mathbbm{1}_{\{t\land T_0^-<\zeta\}})_{t\in [0,\infty)}$ is a martingale. By Proposition~\ref{proposition:characterization-generator} and the discussion surrounding \eqref{A}, especially $(\bullet_{2b})$, so is  $$\left(\Phi_q(X_{t\land T_a^-\land T_n^+})\mathbbm{1}_{\{t\land T_a^-\land T_n^+<\zeta\}}-\int_0^{t\land T_a^-\land T_n^+}q\Phi_q(X_s)\dd s\right)_{t\in [0,\infty)}$$ for all $a\in (0,\infty)$ and $n\in \mathbb{N}$. Letting $n\to\infty$ and then $a\downarrow 0$ we infer by bounded convergence that $(\Phi_q(X_{t})\mathbbm{1}_{\{t<\zeta\}}-\int_0^{t\land T_0^-\land \zeta}q\Phi_q(X_s)\dd s)_{t\in [0,\infty)}$ is a martingale ($\{t<\zeta\}=\{t\land T_0^-<\zeta\}$ a.s. because $\zeta=\infty$ a.s. on $\{T_0^-<\zeta\}$ since $0$ is absorbing for $X$). Therefore, for $x\in [0,\infty)$, on taking $\PP_x$-expectations at deterministic times $t\in [0,\infty)$,  setting 
$$U_t(z;x):=\PP_x[e^{-zX_t};t<\zeta],\quad z\in [0,\infty),$$
we get via Tonelli $$\int_{z_0}^\infty \left(\frac{e^{-xz}}{g(z)}-\int_0^t\partial_z(U_s(z,x))\dd s\right)\exp\left(-\int_z^\infty\frac{q}{g}\right)\dd z=  \int_{z_0}^\infty \frac{U_t(z;x)}{g(z)}\exp\left(-\int_z^\infty\frac{q}{g}\right)\dd z$$ after some rearrangement and on using $X=0$ on $[T_0^-,\infty)$ a.s. (again). 

Fix $q_0\in (0,\infty)$, any will do. The class of bounded maps $\left\{\exp\left(-\int_\cdot^\infty \frac{p}{g}\right):p\in (0,\infty)\right\}$ is multiplicative and generates the Borel $\sigma$-field on $(z_0,\infty)$. By functional monotone class we deduce that 
$$\int_{z_0}^\infty \left(\frac{e^{-xz}}{g(z)}-\int_0^t\partial_z(U_s(z,x))\dd s\right)\exp\left(-\int_z^\infty\frac{q_0}{g}\right)F(z)\dd z=  \int_{z_0}^\infty \frac{U_t(z;x)}{g(z)}\exp\left(-\int_z^\infty\frac{q_0}{g}\right)F(z)\dd z$$ for all bounded Borel $F:(z_0,\infty)\to \mathbb{R}$. This renders 
\begin{equation*}
\infty >\frac{e^{-xz}}{g(z)}-\int_0^t\partial_z(U_s(z,x))\dd s=\frac{U_t(z;x)}{g(z)}>0
\end{equation*} first for Lebesgue-a.e. and then by continuity for all $z\in (z_0,\infty)$. The remainder of the story is now essentially verbatim the same as in the proof of Theorem~\ref{theorem:csbp} (from \eqref{csbp:fund} onwards), mutatis mutandis.
 \end{proof}
 
 \subsection{Killed drifts}  Another example in which the law of  $T^-$ determines already the law of $X$ is the trivial case of a negative drift with killing, velocity and rate of killing being (reasonable) functions of the position. Its character is completely elementary but let us offer the formal details.

 \begin{example}\label{ex:just-killing}
We assume $I$ is open and let $X$ have the following dynamics. It drifts downwards with deterministic speed, which is the (Lebesgue) measurable function  $v:I\to (0,\infty)$ of its position. Apart from that it is killed at a rate, which is the (Lebesgue) measurable function $\omega: I\to [0,\infty)$ of its position. In order to ensure that this is consistent with our standing assumptions (in the  filtration $\FF=\overline{\FF_+^X}$) and that all the points in $I$ below the starting position are hit with positive probability (viz. \eqref{eq:positive-chances}), we assume that $\frac{1+\omega}{v}$  is locally integrable  on $I$, however that $\int_{\inf I}^x\frac{\dd a}{v(a)}$ is infinite for some (equivalently, all) $x\in I$. Fixing an arbitrary $\theta\in I$ we have then $$V(X_0)-V(X_t)=t,\quad t\in [0,\zeta),$$ where $$V(a):=\int_\theta^a\frac{\dd y}{v(y)},\quad a\in I,$$ and we note that $V$ maps $I$ strictly increasingly onto an open interval $J$ of $\mathbb{R}$ that is unbounded below. Furthermore,
\begin{equation}\label{eq:just-killing}
\PP_x[e^{-qT_\lll^-};T_\lll^-<\zeta]=e^{-\int_0^{V(x)-V(\lll)}\omega(V^{-1}(V(x)-t)) \dd t-q(V(x)-V(\lll))}\text{ for $\lll\leq x$ from $I$, $q\in [0,\infty)$.}
\end{equation}
\end{example} 

 \begin{proposition}\label{proposition:killed-drifts}
 Retain the $v$, $\omega$ of Example~\ref{ex:just-killing} with the measurability-integrability assumptions made on them, also $\theta$, $V$, $J$ are as introduced there. Suppose that \eqref{eq:just-killing} holds true (for a given $X$ satisfying our standing assumptions). Then $X$ has the law of the process  of Example~\ref{ex:just-killing}.
 \end{proposition}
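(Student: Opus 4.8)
The plan is to exploit the special \emph{linear-in-$q$} form of the right-hand side of \eqref{eq:just-killing}: a Laplace transform of the shape $c\,e^{-qs_0}$ (with constants $c\in[0,1]$, $s_0\geq 0$) is precisely the transform of the sub-probability measure $c\,\delta_{s_0}$, so that injectivity of the Laplace transform on $[0,\infty)$ will force the corresponding first passage time to be \emph{deterministic} on its survival event. Concretely, for fixed $\lll\leq x$ from $I$ I would write $s_0:=V(x)-V(\lll)$ and $c:=\exp(-\int_0^{s_0}\omega(V^{-1}(V(x)-t))\,\dd t)$, and read the left-hand side of \eqref{eq:just-killing} as $\int_{[0,\infty)}e^{-qt}\mu_{x,\lll}(\dd t)$, where $\mu_{x,\lll}(\cdot):=\PP_x(T_\lll^-\in\cdot,\,T_\lll^-<\zeta)$. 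Equating this to $c\,e^{-qs_0}$ for all $q\in[0,\infty)$ and using uniqueness of Laplace transforms gives $\mu_{x,\lll}=c\,\delta_{s_0}$, hence $\PP_x(T_\lll^-<\zeta)=c$ and $T_\lll^-=V(x)-V(\lll)$ a.s.-$\PP_x$ on $\{T_\lll^-<\zeta\}$. Intersecting over the countably many rational levels $\lll$, I may assume these identities hold simultaneously, a.s.

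Next I would reconstruct the trajectory. Since $\{T_\lll^-\leq t\}=\{\underline X_{(t\land\zeta)-}\leq\lll\}$ (from the Introduction) and $\underline X$ is continuous (no negative jumps), the deterministic values $T_\lll^-=V(x)-V(\lll)$ invert — on $\{t<\zeta\}$ one has $\underline X_t\leq\lll\iff V(x)-V(\lll)\leq t\iff \lll\geq V^{-1}(V(x)-t)$ — to give $\underline X_t=V^{-1}(V(x)-t)$ for all $t\in[0,\zeta)$, a.s.-$\PP_x$; thus the running infimum is deterministic, continuous, and \emph{strictly} decreasing. The crux, and the step I expect to be the main obstacle, is to upgrade this to $X_t=\underline X_t$ for all $t\in[0,\zeta)$, i.e. to show $X$ coincides with its own running infimum. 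I would argue by contradiction: if $X_{t_0}>\underline X_{t_0}$ for some $t_0<\zeta$, put $\delta:=X_{t_0}-\underline X_{t_0}>0$; right-continuity of $X$ yields $\eta>0$ with $X_s>\underline X_{t_0}+\delta/2$ on $[t_0,t_0+\eta)$, whence $\underline X$ stays constant equal to $\underline X_{t_0}$ on that interval, contradicting strict monotonicity of $t\mapsto V^{-1}(V(x)-t)$. (Absence of negative jumps is what prevents $X$ from manufacturing a new minimum by a downward jump, and is used both in identifying $\underline X$ and in this neighbourhood argument.) This pins the path to the deterministic curve $X_t=V^{-1}(V(x)-t)$ on $[0,\zeta)$.

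It then remains to identify the law of the lifetime. Because the path is deterministic and strictly decreasing, $T_\lll^-$ equals $V(x)-V(\lll)$ exactly and $\{T_\lll^-<\zeta\}=\{\zeta>V(x)-V(\lll)\}$, so the first step gives $\PP_x(\zeta>s)=\exp(-\int_0^s\omega(V^{-1}(V(x)-t))\,\dd t)=\exp(-\int_0^s\omega(X_t)\,\dd t)$ for every $s\in[0,\infty)$; here the standing hypothesis $\int_{\inf I}^x \dd a/v(a)=\infty$ is exactly what makes $V(x)-V(\lll)$ sweep out all of $[0,\infty)$ as $\lll\downarrow\inf I$, so that the survival function is determined at every $s$. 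Finally, since the path on $[0,\zeta)$ is a deterministic function of the single random variable $\zeta$ (and conversely), knowing the law of $\zeta$ determines the law of the whole trajectory $(X_t)_{t\in[0,\zeta)}$ under each $\PP_x$; as this survival function is precisely that of the killing time in Example~\ref{ex:just-killing} (killing at rate $\omega$ along the position), I conclude that $X$ has the law of the process described there, which completes the proof.
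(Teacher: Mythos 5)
Your opening move (Laplace inversion of \eqref{eq:just-killing}, giving $\PP_x(T_\lll^-<\zeta)=c$ and $T_\lll^-=V(x)-V(\lll)$ a.s.\ on $\{T_\lll^-<\zeta\}$) is correct, and so are your later steps (the upgrade from $\underline X$ to $X$, and the identification of the law of $\zeta$ once the path is known to be deterministic). The genuine gap is in the step you treat as routine: the claimed equivalence, on $\{t<\zeta\}$, that $\underline X_t\leq\lll\iff V(x)-V(\lll)\leq t$. Only the forward implication follows from what you have established (if the process passes below $\lll$ while alive, it does so at time $V(x)-V(\lll)$); this yields $\underline X_t\geq V^{-1}(V(x)-t)$, i.e.\ the process cannot run \emph{ahead} of the deterministic curve. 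The converse --- that on survival to time $t$ the process must actually have \emph{descended} to the curve --- does not follow from the marginal laws of the $T_\lll^-$, because those laws say nothing about the event $\{t<\zeta\}\cap\{T_\lll^-\geq\zeta\}$ (alive at time $t$ but destined never to pass below $\lll$ before death). This is not a technicality: your argument nowhere invokes the (strong) Markov property, and without it the statement you are effectively proving is \emph{false}. Take $I=\mathbb{R}$, $v\equiv1$, $\omega\equiv1$ (so $V=\id-\theta$), and under $\PP_x$ let $E\sim\mathrm{Exp}(1)$: run $X_t=x-t$ for $t<E$, and at time $E$ flip a fair coin, either killing the process ($\zeta=E$) or letting it jump up by $1$ and drift upward forever ($\zeta=\infty$). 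This process is c\`adl\`ag with no negative jumps and satisfies \eqref{eq:just-killing} exactly: on $\{T_\lll^-<\zeta\}$ one has $T_\lll^-=x-\lll$, and $\PP_x(T_\lll^-<\zeta)=\PP(E>x-\lll)=e^{-(x-\lll)}$. Yet with probability $\tfrac12$ its running infimum is $x-E$, not the curve $x-t$. Of course this process is not Markov --- which is precisely the standing hypothesis your proof never uses.

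The paper closes this hole by routing through Proposition~\ref{proposition:characterization-one}, whose proof is where the Markov property enters: \eqref{eq:just-killing} is converted, for each $q$, into the statement that $(\Phi_q(V(X_t))e^{-qt}\mathbbm{1}_{\{t<\zeta\}})_{t\in[0,\infty)}$ is a local martingale, with $\Phi_q(y)=e^{-\int_\gamma^y\rho-qy}$ and $\rho=\omega\circ V^{-1}$. Being nonnegative it is a supermartingale; taking expectations and letting $q\uparrow\infty$ gives $V(X_t)\geq V(x)-t$ on $\{t<\zeta\}$ (your easy inequality), which makes the process bounded, hence a true martingale, and then the resulting identity --- valid for \emph{all} $q$ --- forces the law of $V(X_t)-(V(x)-t)$ on $\{t<\zeta\}$ to be a point mass at $0$ of exactly the right total mass. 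That single martingale identity delivers both of the facts your argument is missing. Alternatively, you could repair your own proof with one application of the Markov property at a deterministic time $t$: on the bad event $\{t<\zeta,\ \underline X_t>V^{-1}(V(x)-t)\}$, restart from $X_t$ and apply your first step from the new starting point; for $\lll<V^{-1}(V(x)-t)$ this gives positive conditional probability that $T_\lll^-<\zeta$ with $T_\lll^-=t+V(X_t)-V(\lll)>V(x)-V(\lll)$, contradicting the point-mass law of $T_\lll^-$ on $\{T_\lll^-<\zeta\}$ that you derived at the start; hence the bad event is null. Either way, the Markov property is the indispensable ingredient, and as written your proof does not contain it.
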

 \begin{proof}
By Theorem~\ref{theorem:skip-free} we may and do, for the final time, assume $X$ has no negative jumps a.s.. To simplify the notation consider the space-transformed process $Y:=V(X)$ under the probabilities $\QQ_y:=\PP_{V^{-1}(y)}$, $y\in J$.  We show that it is a unit negative drift with lifetime having the same law (modulo the $V$-transformation) as that of the process of Example~\ref{ex:just-killing}, which is enough. For the process $Y$ and its first-passage times downwards $S_y^-$, $y\in J$, 
\begin{equation*}
\PP_y[e^{-qS_\yyy^-};S_\yyy^-<\zeta]=\frac{\Phi_q(y)}{\Phi_q(\yyy)}\text{ for $\yyy\leq y$ from $J$, $q\in [0,\infty)$,}
\end{equation*}
where $$\Phi_q(y):=e^{-\int_{\gamma}^{y}\rho(z)\dd z -qy},\quad y\in J,$$ $\rho:=\omega\circ V^{-1}$, $\gamma \in J$ arbitrary, and we remark that $\rho$ is locally integrable on $J$ (by change of variables, because $\frac{\omega}{v}$ is on $I$). According to Proposition~\ref{proposition:characterization-one} (maybe $J$ is bounded above, but it does not really matter (recall the discussion from Subsection~\ref{subsection:setting} on p.~\pageref{unbdd-above})), for each $q\in [0,\infty)$, the process $(\Phi_q(Y_t)e^{-qt}\mathbbm{1}_{\{t<\zeta\}})_{t\in [0,\infty)}$ is a local martingale. Being nonnegative it is a supermartingale. Taking $\QQ_y$-expectation, $y\in J$, at deterministic time $t\in [0,\infty)$ we get 
$$\QQ_y\left[\exp\left(-\int_y^{Y_{t}}\rho(z)\dd z-q(Y_{t}-(y-t))\right);t<\zeta\right]\leq1.$$ Since it is true for arbitrary $q$, on letting $q\uparrow\infty$ we infer that $Y_t\geq y-t$ a.s.-$\QQ_y$ on $\{t<\zeta\}$. Therefore the mentioned local martingale is a.s. bounded on bounded deterministic intervals and so a true martingale.   Taking again $\QQ_y$-expectation, $y\in J$, at deterministic time $t\in [0,\infty)$ we conclude that
$$\int_{[0,\infty)}\exp\left(-\int_y^{y+z-t}\rho(w)\dd w-qz\right) \left((Y_{t}-(y-t))_\star \QQ_y\vert_{\{t<\zeta\}}\right)(\dd z)=1.$$ 
Since again this is true for arbitrary $q$ it follows at once both  that $Y_t=y-t$ a.s.-$\QQ_y$ on $\{t<\zeta\}$ and also that $$\QQ_y(t<\zeta)=\exp\left(-\int_0^{t}\rho(y-s)\dd s\right).$$ Put differently, $Y$ is a unit negative drift killed at rate which is the function $\rho$ of its position.
The proof is complete.
 \end{proof}

\subsection{An open problem} All of the above certainly begs the following

 \begin{question}\label{question}
 	Assume \eqref{eq:positive-chances}. Could it be that, in general, the law of $X$ (resp. of $X^{T_{\inf I}^-}$) is determined by the laws of the first-passage times downwards when $\inf I\notin I$ (resp. $\inf I\in I$)? 
 \end{question}
Given the content of Proposition~\ref{proposition:characterization-one}, restricting to $\inf I\notin I$ and $X$ having no negative jumps a.s.,  we are basically asking whether knowing a certain class of local martingales related to the first passage downwards problem is enough to know the law of the process. Alas, the proofs  above do not appear to help  immediately towards a positive resolution of the problem, because they are very much focused on establishing a property specific to the class in question: stationary independent increments, self-similarity, branching property, deterministic drifting with killing; using the specific explicit forms of the $\Phi_q$, $q\in [0,\infty)$. On the other hand, the author cannot provide any counterexample either, so the matter is left completely open. 

As a concluding point, let us emphasize however that even a general affirmative answer to Question~\ref{question} would not make Theorems~\ref{theorem:levy-character},~\ref{theorem:self-similar} and~\ref{theorem:csbp}-\ref{theorem:csbp-2} mere examples thereof: in these theorems certain \emph{forms} of the $\Phi_q$ for $q$ in a neighborhood of infinity, not their \emph{precise nature} (up to a multiplicative constant at fixed $q$) are shown to be already sufficient to infer that the process belongs to one or another class. 



\bibliographystyle{plain}
\bibliography{Branching}

\begin{thebibliography}{10}

\bibitem{Abrahams1986}
J.~Abrahams.
\newblock A survey of recent progress on level-crossing problems for random
  processes.
\newblock In I.~F. Blake and H.~V. Poor, editors, {\em Communications and
  Networks: A Survey of Recent Advances}, pages 6--25. Springer New York, New
  York, NY, 1986.

\bibitem{arbib}
M.~A. Arbib.
\newblock Hitting and martingale characterizations of one-dimensional
  diffusions.
\newblock {\em Zeitschrift für Wahrscheinlichkeitstheorie und Verwandte
  Gebiete}, 4(3):232--247, 1965.

\bibitem{avram_li_li_2021}
F.~Avram, B.~Li, and S.~Li.
\newblock General drawdown of general tax model in a time-homogeneous {M}arkov
  framework.
\newblock {\em Journal of Applied Probability}, 58(4):1131–1151, 2021.

\bibitem{bertoin}
J.~Bertoin.
\newblock {\em {L\'e}vy Processes}.
\newblock Cambridge Tracts in Mathematics. Cambridge University Press,
  Cambridge, 1996.

\bibitem{b-g-m}
R.~M. Blumenthal, R.~K. Getoor, and H.~P. McKean~Jr.
\newblock {Markov processes with identical hitting distributions}.
\newblock {\em Illinois Journal of Mathematics}, 6(3):402 -- 420, 1962.

\bibitem{bondesson}
L.~Bondesson.
\newblock A characterization of first passage time distributions for random
  walks.
\newblock {\em Stochastic Processes and their Applications}, 39:81--88, 10
  1991.

\bibitem{doney-chaumont}
L.~Chaumont and R.~Doney.
\newblock On distributions determined by their upward, space–time
  {W}iener-{H}opf factor.
\newblock {\em Journal of Theoretical Probability}, 33(2):1011--1033, 2020.

\bibitem{dellacherie}
C.~Dellacherie and P.~A. Meyer.
\newblock {\em Probabilities and Potential}.
\newblock North-Holland mathematics studies. Hermann, 1978.

\bibitem{ma}
X.~Duhalde, C.~Foucart, and M.~Ma.
\newblock On the hitting times of continuous-state branching processes with
  immigration.
\newblock {\em Stochastic Processes and their Applications},
  124(12):4182--4201, 2014.

\bibitem{ethier}
S.~N. Ethier and T.~G. Kurtz.
\newblock {\em Markov Processes: Characterization and Convergence}.
\newblock Wiley Series in Probability and Statistics. Wiley, 2005.

\bibitem{ge2011markov}
R.~K. Getoor and R.~M. Blumenthal.
\newblock {\em Markov Processes and Potential Theory}.
\newblock Academic Press, 1968.

\bibitem{ikeda1989stochastic}
N.~Ikeda and S.~Watanabe.
\newblock {\em Stochastic Differential Equations and Diffusion Processes}.
\newblock Kodansha scientific books. North-Holland, 1989.

\bibitem{kallenberg}
O.~Kallenberg.
\newblock {\em Foundations of Modern Probability}.
\newblock Probability and Its Applications. Springer New York, 2002.

\bibitem{kkr}
A.~Kuznetsov, A.~E. Kyprianou, and V.~Rivero.
\newblock The theory of scale functions for spectrally negative {L\'e}vy
  processes.
\newblock In {\em L{\'e}vy Matters II: Recent Progress in Theory and
  Applications: Fractional L{\'e}vy Fields, and Scale Functions}, volume 2061,
  pages 97--186. Springer Berlin Heidelberg, Berlin, Heidelberg, 2013.

\bibitem{mateusz}
M.~Kwa\'snicki.
\newblock Random walks are determined by their trace on~the positive half-line.
\newblock {\em Annales Henri Lebesgue}, 3:1389--1397, 2020.

\bibitem{kyprianou}
A.~E. Kyprianou.
\newblock {\em Fluctuations of L{\'e}vy Processes with Applications:
  Introductory Lectures}.
\newblock Universitext. Springer Berlin Heidelberg, 2014.

\bibitem{landriault_li_zhang_2017}
D.~Landriault, B.~Li, and H.~Zhang.
\newblock A unified approach for drawdown (drawup) of time-homogeneous {M}arkov
  processes.
\newblock {\em Journal of Applied Probability}, 54(2):603–626, 2017.

\bibitem{li2020integral}
P.~S. Li and X.~Zhou.
\newblock Integral functionals for spectrally positive {L\'e}vy processes.
\newblock {\em Journal of Theoretical Probability}, 2022 (in press).

\bibitem{nobile}
A.~G. Nobile, L.~M. Ricciardi, and L.~Sacerdote.
\newblock A note on first-passage time and some related problems.
\newblock {\em Journal of Applied Probability}, 22(2):346--359, 1985.

\bibitem{parthasarathy}
K.~R. Parthasarathy.
\newblock {\em Probability Measures on Metric Spaces}.
\newblock AMS Chelsea Publishing Series. Academic Press, 1972.

\bibitem{pierre}
P.~Patie.
\newblock Infinite divisibility of solutions to some self-similar
  integro-differential equations and exponential functionals of {L\'e}vy
  processes.
\newblock {\em Annales de l'Institut Henri Poincar{\'e}, Probabilit{\'e}s et
  Statistiques}, 45(3):667--684, 2009.

\bibitem{redner}
S.~Redner.
\newblock {\em A Guide to First-Passage Processes}.
\newblock Cambridge University Press, 2001.

\bibitem{revuz-yor}
D.~Revuz and M.~Yor.
\newblock {\em Continuous Martingales and {B}rownian Motion}.
\newblock Grundlehren der mathematischen Wissenschaften. Springer Berlin
  Heidelberg, third edition, 2005.

\bibitem{bernstein}
R.~L. Schilling, R.~Song, and Z.~Vondra\v{c}ek.
\newblock {\em Bernstein Functions: Theory and Applications}.
\newblock De Gruyter Studies in Mathematics. De Gruyter, 2012.

\bibitem{vidmar_2019}
M.~Vidmar.
\newblock First passage upwards for state-dependent-killed spectrally negative
  {L\'e}vy processes.
\newblock {\em Journal of Applied Probability}, 56(2):472–495, 2019.

\bibitem{vidmar2021continuousstate}
M.~Vidmar.
\newblock Continuous-state branching processes with spectrally positive
  migration.
\newblock 2021.
\newblock arXiv:2107.05102 (to appear in \emph{Probability and Mathematical
  Statistics}).

\bibitem{vidmar2021exit}
M.~Vidmar.
\newblock Exit problems for positive self-similar {M}arkov processes with
  one-sided jumps.
\newblock In A.~Donati-Martin, A.~Lejay, and A.~Rouault, editors, {\em
  S{\'e}minaire de Probabilit{\'e}s LI}, Lecture Notes in Mathematics, pages
  91--115. Springer, 2022.

\bibitem{vidmar-branch}
M.~Vidmar.
\newblock Some harmonic functions for killed {M}arkov branching processes with
  immigration and culling.
\newblock {\em Stochastics: An International Journal of Probability and
  Stochastic Processes}, 94(4):578--601, 2022.

\bibitem{makoto}
M.~Yamazato.
\newblock Characterization of the class of upward first passage time
  distributions of birth and death processes and related results.
\newblock {\em Journal of The Mathematical Society of Japan}, 40(3):477--499,
  1988.

\end{thebibliography}
\end{document}